\newcommand{\C}{\mathbb{C}}
\newcommand{\R}{\mathbb{R}}
\newcommand{\N}{\mathbb{N}}
\newtheorem{theorem}{Theorem}[section]
\newtheorem{lemma}[theorem]{Lemma}
\newtheorem{proposition}[theorem]{Proposition}
\newtheorem{corollary}[theorem]{Corollary}
\newtheorem{definition}[theorem]{Definition}
\preto{\section}{}
\preto{\subsection}{}
\begin{document}

\title{Counting multijoints}
\author{ Marina Iliopoulou}
\address{School of Mathematics, University of Birmingham, Birmingham, Edgbaston, B15 2TT, UK}
\email{\href{mailto:M.Iliopoulou@bham.ac.uk}{M.Iliopoulou@bham.ac.uk}}
\begin{abstract} Let $\mathfrak{L}_1$, $\mathfrak{L}_2$, $\mathfrak{L}_3$ be finite collections of $L_1$, $L_2$, $L_3$, respectively, lines in $\R^3$, and $J(\mathfrak{L}_1, \mathfrak{L}_2,\mathfrak{L}_3)$ the set of multijoints formed by them, i.e. the set of points $x \in \R^3$, each of which lies in at least one line $l_i \in \mathfrak{L}_i$, for all $i=1,2,3$, such that the directions of $l_1$, $l_2$ and $l_3$ span $\R^3$. We prove here that $|J(\mathfrak{L}_1, \mathfrak{L}_2,\mathfrak{L}_3)|\lesssim (L_1L_2L_3)^{1/2}$, and we extend our results to multijoints formed by real algebraic curves in $\R^3$ of uniformly bounded degree, as well as by curves in $\R^3$ parametrised by real univariate polynomials of uniformly bounded degree. The multijoints problem is a variant of the joints problem, as well as a discrete analogue of the endpoint multilinear Kakeya problem.
\end{abstract}
\maketitle

\section{Introduction}

Let $\mathfrak{L}_1, \ldots , \mathfrak{L}_n$ be finite collections of lines in $\R^n$. We say that a point $x \in \R^n$ is a \textit{multijoint} formed by the $n$ collections of lines if, for each $i=1,\ldots, n$, there exists a line $l_i \in \mathfrak{L}_i$ passing though $x$, so that the directions of $l_1, \ldots, l_n$ span $\R^n$. We denote by $J(\mathfrak{L}_1,\ldots,\mathfrak{L}_n)$ the set of multijoints formed by $\mathfrak{L}_1,\ldots,\mathfrak{L}_n$.

The multijoints problem lies in bounding from above the number of multijoints by a number depending only on the dimension and the cardinalities of the collections of lines forming them. In particular, the multijoints problem is a variant of the well-known joints problem.

More specifically, a point $x \in \R^n$ is a joint for a collection $\mathfrak{L}$ of lines in $\R^n$ if there exist at least $n$ lines in $\mathfrak{L}$ passing through $x$, whose directions span $\R^n$. Note that a multijoint formed by $n$ collections of lines in $\R^n$ is a joint formed by the union of the collections. Now, the joints problem, i.e. the problem of bounding the number of joints by a power of the number of the lines forming them, first appeared in \cite{Chazelle_Edelsbrunner_Guibas_Pollack_Seidel_Sharir_Snoeyink_1992}, where it was proved that if $J$ is the set of joints formed by a collection of $L$ lines in $\R^3$, then $|J| \lesssim L^{7/4}$. Successive progress was made in improving the upper bound of $|J|$ in three dimensions, by Sharir, Sharir and Welzl, and Feldman and Sharir (see \cite{MR1280600}, \cite{MR2047237}, \cite{MR2121298}). Wolff  had already observed in \cite{MR1660476} that there exists a connection between the joints problem and the Kakeya problem, and, using this observation, Bennett, Carbery and Tao found an improved upper bound for $|J|$, with a particular assumption on the angles between the lines forming each joint (see \cite{MR2275834}). 

Eventually, Guth and Katz provided a sharp upper bound in \cite{Guth_Katz_2008}; they showed that, in $\R^3$, $|J|\lesssim L^{3/2}$. The proof was an adaptation of Dvir's algebraic argument  in \cite{MR2525780} for the solution of the finite field Kakeya problem, which involves working with the zero set of a polynomial. The contribution by Dvir, Guth and Katz was very significant, because they established the use of the polynomial method in incidence geometry. Further work was done by Elekes, Kaplan and Sharir in \cite{MR2763049}, and finally, a little later, Kaplan, Sharir and Shustin (in \cite{MR2728035}) and Quilodr\'{a}n (in \cite{MR2594983}) independently solved the joints problem in all dimensions, using again algebraic techniques, simpler than in \cite{Guth_Katz_2008}.

In particular, Quilodr\'{a}n and Kaplan, Sharir and Shustin showed that, if $\mathfrak{L}$ is a collection of $L$ lines in $\R^n$, $n \geq 2$, and $J$ is the set of joints formed by $\mathfrak{L}$, then 
\begin{equation}|J| \lesssim_n L^{\frac{n}{n-1}}. \label{eq:basic} \end{equation} 
In the light of this, Carbery has conjectured that, if, for each joint $x \in J$, the multiplicity $M(x)$ of $x$ is the number of $n$-tuples of linearly independent lines of $\mathfrak{L}$ passing through $x$, then
\begin{equation}\label{eq:jointsmultiplicities}\sum_{x \in J}M(x)^{1/(n-1)}\lesssim_n L^{n/(n-1)};
\end{equation} in fact, we have shown \eqref{eq:jointsmultiplicities} for $n=3$ in \cite{Iliopoulou_12}.

Now, if $\mathfrak{L}_1, \ldots, \mathfrak{L}_n$ are finite collections of $L_1, \ldots, L_n$, respectively, lines in $\R^n$, for each multijoint $x \in J(\mathfrak{L}_1,\ldots ,\mathfrak{L}_n)$ we define the multiplicity of $x$ as $N(x):=\{(l_1,\ldots,l_n) \in \mathfrak{L}_1\times\cdots\times\mathfrak{L}_n$: $x \in l_i$ for all $i=1,\ldots,n$, and the directions of the lines $l_1, \ldots, l_n$ span $\R^n\}$.

In analogy to \eqref{eq:basic} and \eqref{eq:jointsmultiplicities}, Carbery has also conjectured that, for all $n \geq 3$ (for $n=2$ it is obvious),
\begin{equation} \label{eq:mult1}|J(\mathfrak{L}_1,\ldots,\mathfrak{L}_n)| \lesssim_n (L_1\cdots L_n)^{1/(n-1)} \end{equation}
and
\begin{equation} \label{eq:mult1000}\sum_{x \in J(\mathfrak{L}_1,\ldots,\mathfrak{L}_n)}N(x)^{1/(n-1)} \lesssim_n (L_1\cdots L_n)^{1/(n-1)}. \end{equation}

The aim of this paper is to show \eqref{eq:mult1} for $n=3$, i.e. the following.

\begin{theorem}\label{theoremmult2} Let $\mathfrak{L}_1$, $\mathfrak{L}_2$, $\mathfrak{L}_3$ be finite collections of $L_1$, $L_2$ and $L_3$, respectively, lines in $\R^3$. Then,
\begin{equation} \label{eq:proved}|J(\mathfrak{L}_1,\mathfrak{L}_2,\mathfrak{L}_3)|\leq c\cdot  (L_1L_2L_3)^{1/2}, \end{equation}
where $c$ is a constant independent of $\mathfrak{L}_1$, $\mathfrak{L}_2$ and $\mathfrak{L}_3$.
\end{theorem}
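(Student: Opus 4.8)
The plan is to combine the polynomial method with an induction on $L_1+L_2+L_3$, using two elementary bounds as input. First, on any fixed line $l\in\mathfrak L_i$, two distinct multijoints carry two distinct lines of $\mathfrak L_j$ through them (a common such line would pass through both points of $l$ and hence equal $l$, forcing a spanning triple to contain $l$ twice), so $l$ supports at most $\min(L_j,L_k)\le(L_jL_k)^{1/2}$ multijoints; summing over $\mathfrak L_i$ gives $|J|\le L_i(L_jL_k)^{1/2}$, hence $|J|\le(L_1L_2L_3)^{2/3}$. Second, a multijoint is a joint of $\mathfrak L_1\cup\mathfrak L_2\cup\mathfrak L_3$, so \eqref{eq:basic} gives $|J|\lesssim(L_1+L_2+L_3)^{3/2}$, which already yields the theorem when the $L_i$ are comparable; the difficulty lies entirely in the unbalanced regime.

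For the induction, suppose $|J|>c_0(L_1L_2L_3)^{1/2}$ with $c_0$ large, and prune: if some $l_0\in\mathfrak L_i$ supports at most $\tfrac{c_0}{2}(L_jL_k/L_i)^{1/2}$ multijoints, delete it and apply the inductive hypothesis to the configuration with $L_i-1$ lines in the $i$-th family; since $L_i^{1/2}-(L_i-1)^{1/2}\ge\tfrac12 L_i^{-1/2}$, the bound $c_0(L_1L_2L_3)^{1/2}$ is restored, a contradiction. So we may assume every line of $\mathfrak L_i$ supports more than $t_i:=\tfrac{c_0}{2}(L_jL_k/L_i)^{1/2}$ multijoints; comparing with the product bound, this already forces $L_i\gtrsim c_0^2$ for all $i$.

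Now let $p\neq0$ be of least degree $d$ among polynomials vanishing on $J$; a dimension count gives $d\lesssim|J|^{1/3}$. At a smooth point of $Z(p)$, every line of $Z(p)$ through it lies in the tangent plane, which is two-dimensional and so cannot contain three lines spanning $\R^3$; hence each multijoint is either a singular point of $Z(p)$, or lies on a line of some $\mathfrak L_i$ not contained in $Z(p)$, which therefore meets $Z(p)$ --- and thus supports multijoints --- in at most $d$ points. If every line of $\mathfrak L_1\cup\mathfrak L_2\cup\mathfrak L_3$ lay in $Z(p)$, then every multijoint would be a singular point of $Z(p)$, so $\nabla p$ would vanish on $J$; since $p$ is non-constant (it vanishes on the nonempty set $J$), some partial of $p$ is then a nonzero polynomial of degree $\le d-1$ vanishing on $J$, contradicting the minimality of $d$. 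So some line of some $\mathfrak L_i$ is transverse to $Z(p)$, and the multijoints lying on transverse lines number at most $L_i\min\bigl(d,(L_jL_k)^{1/2}\bigr)$ per family $i$.

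The remaining, and main, obstacle is that these ingredients do not yet close the induction in the unbalanced regime: the transverse count per family is $\lesssim L_i\min(d,(L_jL_k)^{1/2})$, which beats $(L_1L_2L_3)^{1/2}$ only when $d\lesssim(L_jL_k/L_i)^{1/2}$ simultaneously for all $i$ --- a bound not delivered by $d\lesssim|J|^{1/3}$ when, say, $L_1$ greatly exceeds $L_2,L_3$. (For a single family of lines the analogous lower bound on multijoints-per-line already beats $d$, which is exactly why that case finishes at once.) The way around this is not to stop at one surface: the singular multijoints lie on $Z(p)_{\mathrm{sing}}$, a curve of degree $\lesssim d^2$; a line not contained in this curve meets it in $\lesssim d^2$ points, while the lines that are components of the curve number $\lesssim d^2$ and each support $\lesssim(L_1L_2L_3)^{1/3}$ multijoints, and one iterates (or re-enters the whole scheme on each irreducible component), meanwhile re-balancing the transverse contribution against the per-line bound $(L_jL_k)^{1/2}$, family by family, and against \eqref{eq:basic}. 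Arranging this bookkeeping so that the induction on $L_1+L_2+L_3$ propagates the exponent $1/2$ rather than $2/3$ is the technical heart of the argument.
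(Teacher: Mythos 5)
Your write-up stops, by your own admission, exactly where the content of the theorem begins, so it is not a proof but a reduction to the hard case. Everything you do carry out (the trivial per-line bounds, the pruning step, the minimal-degree polynomial $p$ with $d\lesssim |J|^{1/3}$, the regular/singular dichotomy) is the standard Quilodr\'an/Kaplan--Sharir--Shustin joints argument, and it closes only when every surviving line must contain more than $d$ points of $J$. After your pruning the guaranteed count on a line of $\mathfrak{L}_i$ is only $t_i\sim (L_jL_k/L_i)^{1/2}$, while the degree you can afford is as large as $(L_1L_2L_3)^{1/6}$; in the unbalanced regime ($L_1\gg L_2L_3$, say) one has $t_1\ll d$, so the lines of the large family cannot be forced into $Z(p)$, the transverse contribution $L_1\min\bigl(d,(L_2L_3)^{1/2}\bigr)$ exceeds the target $(L_1L_2L_3)^{1/2}$ by an unbounded factor, and passing to the singular curve of $Z(p)$ gives no leverage either, because those lines need not meet it in many points. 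Nothing in your final sketch (``iterate on the singular curve, re-balance family by family'') explains how the exponent $1/2$ survives such an iteration, and that is precisely the missing idea, not a bookkeeping detail. (A smaller point: your justification of the per-line bound $\min(L_j,L_k)$ is flawed --- if two multijoints on $l$ share their $\mathfrak{L}_j$-witness then that witness is $l$ itself, which is possible when one line belongs to two families and does not force any triple to ``contain $l$ twice''; one only gets $\leq L_j+L_k$ per line, which is harmless but should be stated correctly.)

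For comparison, the paper does not use a minimal-degree polynomial vanishing on $J$ at all. It proves the stronger multiplicity statement, Theorem \ref{multsimple}, by induction on the cardinalities of the three families, and that strengthening is exactly the re-balancing device you are missing. Ordering so that $L_1/\lceil N_1\rceil\leq L_2/\lceil N_2\rceil\leq L_3/\lceil N_3\rceil$, one partitions with a Guth--Katz polynomial of degree $d\sim A\,L_1L_2/(S\lceil N_1\rceil\lceil N_2\rceil)$; a cellular incidence count (a line not in $Z$ meets $Z$ at most $d$ times) shows that for $A$ large almost all of $J_{N_1,N_2,N_3}$ lies on $Z$. Then either $S\lceil N_j\rceil/L_j\lesssim d$ for some $j$, which gives the bound directly; or the refined lines lie in $Z$, the relevant points are critical, and Proposition \ref{2.2.3} (at most $d^2$ critical lines) disposes of the case where many lines are so refined; in the remaining case the induction hypothesis is applied to a subcollection of the $j$-th family of size $<L_j/10^{1000}$ together with the other two refined families, with multiplicities reduced only by absolute constants, and the factor $(N_1N_2N_3)^{-1/2}$ in the inductive statement is what makes the constants close. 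With the plain count and an induction on $L_1+L_2+L_3$ there is no analogous mechanism, so to complete your route you would need either to carry multiplicities through the induction as in Theorem \ref{multsimple} (or some comparable strengthening, e.g.\ a handicap-type induction), or a genuinely new way to control the large family; as written, your argument only yields the trivial $(L_1L_2L_3)^{2/3}$ bound together with a correct identification of where the difficulty lies.
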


We would like to mention here that the multijoints problem, as the joints problem, can be seen from a harmonic analytic point of view; in fact, it is a discrete analogue of the endpoint multilinear Kakeya problem. More specifically, the multilinear Kakeya problem asks for the optimal upper bound of the volume of the intersection of $n$ essentially transversal families of tubes in $\R^n$, depending only on the cardinalities of the families and the number of tubes of each family passing through each point of $\R^n$ (where the expression ``essentially transversal" means that, for all $i=1,\ldots,n$, the direction of each tube in the family $\mathbb{T}_i$ lies in a fixed $\frac{c}{n}$-cap around the vector $e_i \in \R^n$, where the vectors $e_1, \ldots, e_n$ are orthonormal). Using unexpected techniques from algebraic geometry, Guth, in \cite{Guth_10}, improved the already existing result by Bennett, Carbery and Tao (see \cite{MR2275834}), and showed that, whenever $\mathbb{T}_1, \ldots, \mathbb{T}_n$ are $n$ essentially transversal families of doubly-infinite tubes in $\mathbb{R}^n$, with cross section an $(n-1)$-dimensional unit ball, it holds that
\begin{displaymath}\int_{x \in \R^n}\prod_{i=1}^{n}\Bigg(\sum_{T_i \in \mathbb{T}_i}\chi_{T_i}(x)\Bigg)^{1/(n-1)}{\rm d} x \lesssim_{n} (|\mathbb{T}_1| \cdots |\mathbb{T}_n|)^{1/(n-1)}, \end{displaymath} i.e.
\begin{equation}\label{eq:guthstrong}\int_{x \in \R^n}\big(\#\{\text{tubes of }\mathbb{T}_1\text{ through }x\}\cdots \#\{\text{tubes of }\mathbb{T}_n\text{ through }x\}\big)^{1/(n-1)}{\rm d}x \end{equation}
\begin{displaymath} 
\lesssim_n (|\mathbb{T}_1| \cdots |\mathbb{T}_n|)^{1/(n-1)},\end{displaymath}
an estimate that constitutes a continuous analogue of \eqref{eq:mult1000}. In addition, if $E(\mathbb{T}_1,\cdots,\mathbb{T}_n)$ is the set of points in $\R^n$ each lying in at least one tube of the family $\mathbb{T}_i$ for all $i=1,\ldots,n$, \eqref{eq:guthstrong} implies that
\begin{equation} \label{eq:guthsimple} vol_n(E(\mathbb{T}_1,\ldots,\mathbb{T}_n)) \lesssim_n (|\mathbb{T}_1| \cdots |\mathbb{T}_n|)^{1/(n-1)},
\end{equation} 
an estimate whose discrete analogue is clearly \eqref{eq:mult1}.

In order to prove Theorem \ref{theoremmult2}, we use algebraic methods similar to the ones developed by Guth and Katz in \cite{Guth_Katz_2010}. In fact, we deduce Theorem \ref{theoremmult2} as a corollary of Theorem \ref{multsimple} which follows.

\begin{definition} We say that $n$ collections $\mathfrak{L}_1, \ldots, \mathfrak{L}_n$ of lines in $\R^n$ are \emph{transversal} if, whenever $l_i \in\mathfrak{L}_i$, for $i=1,\ldots,n$, the directions of $l_1, \ldots, l_n$ span $\R^n$.

\end{definition}

\begin{definition} Let $\mathfrak{L}_1,\ldots,\mathfrak{L}_n$ be finite collections of lines in $\R^n$.

For all $(N_1,\ldots,N_n) \in \R_+^n$, we define as $J_{N_1,\ldots,N_n}(\mathfrak{L}_1,\ldots,\mathfrak{L}_n)$ the set of those multijoints $x$ formed by $\mathfrak{L}_1, \ldots, \mathfrak{L}_n$, with the property that there exist transversal collections $\mathfrak{L}_1(x) \subseteq \mathfrak{L}_1$, $\ldots$, $\mathfrak{L}_n(x) \subseteq \mathfrak{L}_n$ of lines passing through $x$, such that $|\mathfrak{L}_i(x)|\geq N_i$, for all $i=1,\ldots,n$.

\end{definition}

\begin{theorem} \label{multsimple} Let $\mathfrak{L}_1$, $\mathfrak{L}_2$, $\mathfrak{L}_3$ be finite collections of $L_1$, $L_2$ and $L_3$, respectively, lines in $\R^3$. Then,
\begin{displaymath} |J_{N_1,N_2,N_3}(\mathfrak{L}_1,\mathfrak{L}_2,\mathfrak{L}_3)|\leq c \cdot \frac{(L_1L_2L_3)^{1/2}}{(N_1N_2N_3)^{1/2}}, \; \forall\;(N_1,N_2,N_3) \in \N_{+}^3, \end{displaymath}
where $c$ is a constant independent of $\mathfrak{L}_1$, $\mathfrak{L}_2$ and $\mathfrak{L}_3$.
\end{theorem}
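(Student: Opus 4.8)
I would run the polynomial method in the spirit of \cite{Guth_Katz_2010}, carrying the multiplicities $N_1,N_2,N_3$ through the whole argument. Write $J=J_{N_1,N_2,N_3}(\mathfrak{L}_1,\mathfrak{L}_2,\mathfrak{L}_3)$ and $A=|J|$; one may assume $N_i\le L_i$ for each $i$, else $J=\emptyset$, and I would argue by induction on $L_1+L_2+L_3$. First, for distinct $i,j$ and $x\in J$, the $\ge N_iN_j$ pairs $(l,l')\in\mathfrak{L}_i(x)\times\mathfrak{L}_j(x)$ consist of distinct lines meeting precisely at $x$, so distinct multijoints contribute disjoint such families of pairs; counting pairs in $\mathfrak{L}_i\times\mathfrak{L}_j$ gives the elementary estimate $A\le L_iL_j/(N_iN_j)$. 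Hence if some index, say $1$, satisfies $L_1/N_1\ge (L_2/N_2)(L_3/N_3)$, then $A\le L_2L_3/(N_2N_3)\le (L_1L_2L_3/(N_1N_2N_3))^{1/2}$ and we are done. So I may assume the \emph{balanced} inequalities $L_i/N_i<\prod_{j\ne i}L_j/N_j$, equivalently $L_i/N_i<(L_1L_2L_3/(N_1N_2N_3))^{1/2}$ for each $i$, a fact I would use repeatedly to absorb error terms.

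\textbf{The polynomial partition and the two cases.} Assuming for contradiction that $A>c\,(L_1L_2L_3)^{1/2}(N_1N_2N_3)^{-1/2}$ for a large absolute constant $c$, apply the polynomial ham--sandwich theorem to produce a nonzero $p$ of degree $D$ (to be chosen) so that $\R^3\setminus Z(p)$ has $O(D^3)$ cells, each meeting $\lesssim A/D^3$ points of $J$, and split $J$ into the multijoints lying in open cells and those lying on $Z(p)$. For the first part: a multijoint in an open cell $C$ uses only lines meeting $C$, and inside $C$ it is a multijoint of the restricted collections $\{\ell\in\mathfrak{L}_i:\ell\cap C\ne\emptyset\}$ \emph{with the same multiplicities}; since a line of $\mathfrak{L}_i$ not contained in $Z(p)$ meets $\le D+1$ cells, the total number of (line of $\mathfrak{L}_i$, cell) incidences is $\lesssim DL_i$, so after discarding the few cells atypically rich in the lines of some family, the inductive hypothesis applied cell by cell and summed via H\"older returns $\lesssim (L_1L_2L_3)^{1/2}(N_1N_2N_3)^{-1/2}$. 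For the second part: for each $i$ separate the lines of $\mathfrak{L}_i$ lying on $Z(p)$ from the transverse ones, each of which meets $Z(p)$ in $\le D$ points. If $x\in Z(p)$ carries a transversal triple contained in $Z(p)$, then $p$ has vanishing directional derivative along three spanning directions at $x$, so $\nabla p(x)=0$; such points lie on $Z(p)\cap Z(\partial_1p)\cap Z(\partial_2p)\cap Z(\partial_3p)$, which (for $p$ square-free) has dimension $\le 1$, and one recurses onto it --- here one also uses that no multijoint can lie in a plane, and that the remaining ruled components of $Z(p)$ carry at most two of their lines through a generic point, so they cannot support a transversal triple inside $Z(p)$. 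The multijoints on $Z(p)$ not of this type lie on a transverse line of some $\mathfrak{L}_i$, and a B\'ezout count (preceded by a pruning that makes every surviving line carry its share of multijoints, which is how the $N_i$ are recovered in the denominator) bounds their number, again by $\lesssim (L_1L_2L_3)^{1/2}(N_1N_2N_3)^{-1/2}$ in the balanced regime. Summing the contributions contradicts the hypothesis on $A$ once $c$ and $D$ are fixed appropriately.

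\textbf{The main obstacle, and a cross-check.} The hard point is extracting the sharp \emph{geometric-mean} exponent $(L_1L_2L_3/(N_1N_2N_3))^{1/2}$ rather than the weaker $\max_i (L_i/N_i)^{3/2}$ produced by a bare vanishing-polynomial argument: this is exactly what forces the reduction to the balanced regime and makes the accounting in the cell sum and in the analysis of $Z(p)$ --- including the classification of the surfaces that can contain many lines of a single family --- constant-sensitive and delicate. As a consistency check, note that Theorem \ref{theoremmult2} is the case $N_1=N_2=N_3=1$ of the present statement, and conversely the general case follows from it by random sub-sampling: keeping each line of $\mathfrak{L}_i$ independently with probability $1/N_i$, a point of $J_{N_1,N_2,N_3}$ survives as an honest multijoint of the sampled collections with probability $\gtrsim 1$, since at least one of its $\ge N_i$ transversal lines from each of the three independently sampled families is retained; taking expectations, applying Theorem \ref{theoremmult2}, and using Jensen's inequality then yields $|J_{N_1,N_2,N_3}|\lesssim (L_1L_2L_3)^{1/2}(N_1N_2N_3)^{-1/2}$. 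Thus proving the $N_i=1$ case would already suffice, but the direct argument above disposes of all multiplicities simultaneously.
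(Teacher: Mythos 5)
Your overall flavour (polynomial partitioning, a cell/surface dichotomy, criticality of points carrying a transversal triple inside $Z(p)$) matches the paper, but two essential mechanisms are missing, and as sketched the argument does not close.

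First, the cellular case. You propose to fix a degree $D$ ``to be chosen'', apply the induction hypothesis inside each (non-rich) cell, and sum via H\"older. For a bound of the fixed form $|J_{N_1,N_2,N_3}|\leq c\,(L_1L_2L_3)^{1/2}(N_1N_2N_3)^{-1/2}$ this cannot produce either the bound or the desired contradiction: the cell-by-cell sum returns at best $C_0\,c\,(L_1L_2L_3)^{1/2}(N_1N_2N_3)^{-1/2}$ with an absolute constant $C_0>1$ coming from the partitioning theorem and the incidence count $\sum_{cell}L_{i,cell}\lesssim DL_i$, so assuming $|J|>c(\cdots)$ is not contradicted and the induction does not close with the same constant $c$. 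The paper avoids in-cell induction entirely: the degree is not a free parameter but is taken as $d= A\,L_1L_2/\big(S\lceil N_1\rceil\lceil N_2\rceil\big)$, i.e.\ it depends on the unknown quantity $S=|J_{N_1,N_2,N_3}|$ itself, and in the cellular case one runs only a direct bilinear count $S_{cell}\lceil N_1\rceil\lceil N_2\rceil\lesssim L_{1,cell}L_{2,cell}$ together with $\sum_{cell}L_{i,cell}\lesssim L_i d$ over the $\gtrsim d^3$ full cells; this forces $d\lesssim L_1L_2/(S\lceil N_1\rceil\lceil N_2\rceil)$, i.e.\ $A\lesssim 1$, a genuine contradiction once $A$ is a large constant. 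This choice of $d$ is the key device of the proof and is absent from your plan. (A side issue: discarding ``atypically rich'' cells may discard a non-trivial fraction of the points unless one argues, as the paper does, through the full-cell pigeonholing.)

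Second, the surface case is only gestured at, precisely where the multiplicities and the geometric mean must be recovered. Your appeal to recursing onto the one-dimensional critical set, to ``no multijoint can lie in a plane'' (false as stated: the point may lie in a plane; what is true is that the three transversal lines cannot all lie in the tangent plane at a regular point), and to a classification of ruled components, does not yield the bound. The paper's mechanism is concrete and different: pigeonhole to lines $\mathfrak{L}_j'$ that are rich in points of $\mathfrak{G}\cap Z$; when the richness threshold exceeds $d$ (Case 2) these lines lie in $Z$, so the surviving points are critical points of $Z$; then either some family contains $\geq 10^{-1000}L_j$ lines each containing more than $d$ critical points, hence critical lines, contradicting the bound $d^2$ of Proposition \ref{2.2.3} with the specific $d$ above, or else the subfamily $\mathfrak{L}_{j,1}$ of such lines (for the index $j$ at its inductive maximum $M_j$) has size $<10^{-1000}L_j$, and the induction hypothesis is applied to $(\mathfrak{L}_{j,1},\mathfrak{L}_i',\mathfrak{L}_k')$ with multiplicities comparable to $N_j,N_i,N_k$; the factor $10^{-500}$ gained from $|\mathfrak{L}_{j,1}|^{1/2}$ is what absorbs all the constant losses and lets the induction close with the same $c$. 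Without this (or an equivalent constant-absorbing device), your ``pruning plus B\'ezout'' step and your final ``fix $c$ and $D$ appropriately'' do not go through. Your balanced-regime reduction and the probabilistic equivalence with Theorem \ref{theoremmult2} are fine (the latter is exactly the paper's remark), but they do not supply the missing core of the argument.
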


Indeed, Theorem \ref{theoremmult2} follows from Theorem \ref{multsimple}, as, for any finite collections $\mathfrak{L}_1$, $\mathfrak{L}_2$ and $\mathfrak{L}_3$ of lines in $\R^3$, and for each $x \in J(\mathfrak{L}_1,\mathfrak{L}_2,\mathfrak{L}_3)$, there exist lines $l_1(x) \in \mathfrak{L}_1$, $l_2(x) \in \mathfrak{L}_2$ and $l_3(x) \in \mathfrak{L}_3$, passing through $x$, with the property that their directions span $\R^3$. Therefore, $J(\mathfrak{L}_1,\mathfrak{L}_2,\mathfrak{L}_3)=J_{1,1,1}(\mathfrak{L}_1,\mathfrak{L}_2,\mathfrak{L}_3)$, and thus $|J(\mathfrak{L}_1,\mathfrak{L}_2,\mathfrak{L}_3)| \lesssim \frac{(L_1L_2L_3)^{1/2}}{(1 \cdot 1 \cdot 1)^{1/2}}\sim (L_1L_2L_3)^{1/2}$.

In fact, however, Theorem \ref{multsimple}, albeit seemingly stronger than Theorem \ref{theoremmult2}, is actually equivalent to Theorem \ref{theoremmult2}. We will explain this using a standard probabilistic argument (we were first introduced to the argument by Solymosi, via verbal communication with Carbery; it appears, for example, in \href{http://math.mit.edu/~lguth/PolyMethod/lect34.pdf}{``The multilinear Kakeya inequality"} online notes by Guth, while a detailed approach can be found in \cite[Chapter 7]{Iliopoulou_13}).\footnote{The reader may wonder at this point why we choose in this paper to first prove the more complicated statement of Theorem \ref{multsimple}, and then obtain the statement of Theorem \ref{theoremmult2} as a corollary, rather than just prove the simpler statement of Theorem \ref{theoremmult2} from scratch, and derive the statement of Theorem \ref{multsimple} as a corollary. The reason is that the proof we will present here for the seemingly more general statement of Theorem \ref{multsimple}, which includes all $(N_1,N_2,N_3) \in \N_{+}^3$, is not any less straightforward than its proof for the case $(N_1,N_2,N_3)=(1,1,1)$, i.e. than the proof of Theorem \ref{theoremmult2} itself; therefore, we believe that the main ideas behind the proof of Theorem \ref{theoremmult2} will be more easily demonstrated via the more general context of the proof of Theorem \ref{multsimple}.}

\textbf{Remark.} We hope that Theorem \ref{multsimple} may lead to the proof of \eqref{eq:mult1000}. In fact, even though we have not yet shown \eqref{eq:mult1000} in $\R^3$, we have managed to take advantage of Theorem \ref{multsimple} in \cite{Iliopoulou_13}, to effectively show \eqref{eq:mult1000} in $\R^3$ when the three collections of lines involved are transversal. 

More particularly, we prove in \cite{Iliopoulou_13} that, if $\mathfrak{L}_1$, $\mathfrak{L}_2$, $\mathfrak{L}_3$ are transversal, finite collections of $L_1$, $L_2$ and $L_3$, respectively, lines in $\R^3$, and, for each $x \in J(\mathfrak{L}_1,\mathfrak{L}_2, \mathfrak{L}_3)$ and $i=1,2,3$, $N_i(x)$ denotes the number of lines of $\mathfrak{L}_i$ passing through $x$, then
$$\sum_{\{x \in J(\mathfrak{L}_1,\mathfrak{L}_2,\mathfrak{L}_3):\;N_m(x)>10^{12}\}} \big(N_1(x)N_2(x)N_3(x)\big)^{1/2} \lesssim (L_1L_2L_3)^{1/2},
$$
where $m \in \{1,2,3\}$ is such that $L_m=\min\{L_1,L_2,L_3\}$. We have not yet managed, however, to show that$$\sum_{x \in J(\mathfrak{L}_1,\mathfrak{L}_2,\mathfrak{L}_3)} \big(N_1(x)N_2(x)N_3(x)\big)^{1/2} \lesssim (L_1L_2L_3)^{1/2},
$$
i.e. \eqref{eq:mult1000} in $\R^3$ when the three collections of lines involved are transversal, so we will not focus on this result here; details can be found in \cite{Iliopoulou_13}.

$\;\;\;\;\;\;\;\;\;\;\;\;\;\;\;\;\;\;\;\;\;\;\;\;\;\;\;\;\;\;\;\;\;\;\;\;\;\;\;\;\;\;\;\;\;\;\;\;\;\;\;\;\;\;\;\;\;\;\;\;\;\;\;\;\;\;\;\;\;\;\;\;\;\;\;\;\;\;\;\;\;\;\;\;\;\;\;\;\;\;\;\;\;\;\;\;\;\;\;\;\;\;\;\;\;\;\;\;\;\;\;\;\;\;\;\;\blacksquare$

Our techniques in this paper are based on the topology and continuous nature of euclidean space, as well as theorems that rely on them, such as the Borsuk-Ulam theorem and the intermediate value theorem, which is the reason why their immediate application to different field settings seems unlikely. In addition, we are taking advantage of the fact that the number of critical lines of an algebraic hypersurface in $\R^3$ is bounded, something whose truth we do not know in higher dimensions; therefore, we cannot yet apply our methods to solve the multijoints problem in $\R^n$, for $n\geq 4$.

As we have already mentioned, our basic tool will be the Guth-Katz polynomial method, as it appears in \cite{Guth_Katz_2010}. In Section \ref{section3} we go on to present this method, together with certain computational geometric results which will prove useful to our goal. In Section \ref{section4} we prove Theorem \ref{multsimple} and derive Theorem \ref{theoremmult2} as a corollary, while we also show that they are actually equivalent. Finally, in Section \ref{section5} we establish the geometric background appropriate for the study of the multijoints problem in the case where the multijoints are created by real algebraic curves, to eventually prove, in Section \ref{section6}, the corresponding statements of Theorem \ref{multsimple} and Theorem \ref{theoremmult2} for real algebraic curves in $\R^3$ of uniformly bounded degree (Theorem \ref{guthweakcurves} and Theorem \ref{theoremmult3}), as well as for curves in $\R^3$ parametrised by real univariate polynomials of uniformly bounded degree (Corollary \ref{polynomialcurves} and Corollary \ref{weakpolynomialcurves}).

We clarify here that, in whatever precedes and follows, any expression of the form $A \lesssim B$ means that there exists a non-negative constant $M$, depending only on the dimension, such that $A \leq M \cdot B$, while any expression of the form $A\lesssim_{b_1,\ldots,b_m} B$ means that there exists a non-negative constant $M_{b_1,\ldots,b_m}$, depending only on the dimension and $b_1, \ldots, b_m$, such that $A \lesssim M_{b_1,\ldots,b_m}\cdot B$. In addition, any expression of the form $A \gtrsim B$ or $A \gtrsim_{b_1,\ldots,b_m} B$ means that $B \lesssim A$ or $B \lesssim_{b_1,\ldots,b_m} A$, respectively. Finally, any expression of the form $A \sim B$ means that $A \lesssim B$ and $A \gtrsim B$, while any expression of the form $A \sim_{b_1,\ldots,b_m} B$ means that $A \lesssim_{b_1,\ldots,b_m} B$ and $A \gtrsim_{b_1,\ldots,b_m} B$.

\textbf{Acknowledgements.} I would like to thank my PhD supervisor, Anthony Carbery, for introducing me to the questions dealt with in this paper, for sharing with me his conjectures, thoughts and intuition, and for providing me with his support. While writing this paper, I was first supported partially by an EPSRC Doctoral Training Grant, and later by European Research Council Grant 307617, for both of which I am very grateful.

\section{Computational results on algebraic hypersurfaces}\label{section3}

Given a finite set of points $\mathfrak{G}$ in $\R^n$ and a quantity $d>1$, the Guth-Katz polynomial method results in a decomposition of $\R^n$, and consequently of the set $\mathfrak{G}$, by the zero set of a polynomial. Such a decomposition enriches our setting with extra structure, allowing us to derive information about the set $\mathfrak{G}$. The method is fully explained in \cite{Guth_Katz_2010}; here, we are presenting the basic result (which is based on the "ham sandwich" theorem by Stone and Tukey, see \cite{MR0007036}).

\begin{theorem}\emph{\textbf{(Guth, Katz, \cite[Theorem 4.1]{Guth_Katz_2010})}}\label{2.1.3} Let $\mathfrak{G}$ be a finite set of $S$ points in $\R^n$, and $d>1$. Then, there exists a non-zero polynomial $p \in \R[x_1,\ldots,x_n]$, of degree $\leq d$, whose zero set decomposes $\R^n$ in $\sim d^n$ cells, each of which contains $\lesssim S/d^n$ points of $\mathfrak{G}$.\end{theorem}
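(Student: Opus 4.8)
The plan is to construct the polynomial $p$ iteratively, as a product $p = p_1\cdots p_k$ of polynomials each of which simultaneously bisects a whole family of finite point sets, via the polynomial ham-sandwich theorem of Stone and Tukey. The input that makes the ham-sandwich theorem usable for higher-degree polynomials is the Veronese embedding: if $\nu_D\colon \R^n \to \R^{K}$, $K=\binom{D+n}{n}$, sends a point to the tuple of all its monomials of degree $\leq D$, then every polynomial of degree $\leq D$ on $\R^n$ is the pullback under $\nu_D$ of an affine functional on $\R^K$, and its zero set $Z(q)$ is the $\nu_D$-preimage of an affine hyperplane. Hence the classical ham-sandwich theorem (a consequence of the Borsuk--Ulam theorem), which bisects any $K-1$ finite subsets of $\R^K$ by a single hyperplane, gives: for any $N \leq \binom{D+n}{n}-1$ finite subsets of $\R^n$ there is a nonzero polynomial $q$ of degree $\leq D$ such that each of the open sets $\{q>0\}$ and $\{q<0\}$ contains at most half of each of the $N$ subsets.

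Next I would run the iteration. Assume $\mathfrak{G}\neq\emptyset$ (otherwise the statement is trivial) and put $\mathcal{O}_0 = \{\R^n\}$. Suppose inductively that nonzero polynomials $p_1,\ldots,p_j$ have been chosen with $\deg p_i \lesssim_n 2^{(i-1)/n}$, so that $\deg(p_1\cdots p_j) \lesssim_n \sum_{i=1}^{j}2^{(i-1)/n} \lesssim_n 2^{j/n}$ (the geometric sum is controlled by its last term); then, by the classical Milnor--Thom bound on the number of connected components of the complement of a real algebraic hypersurface, the set $\mathcal{O}_j$ of connected components of $\R^n\setminus Z(p_1\cdots p_j)$ has cardinality $\lesssim_n (\deg(p_1\cdots p_j))^n \lesssim_n 2^j$. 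Suppose also that each $O\in\mathcal{O}_j$ satisfies $|O\cap\mathfrak{G}| \leq S/2^j$. To pass to step $j+1$, apply the polynomial ham-sandwich statement above to the family $\{O\cap\mathfrak{G} : O\in\mathcal{O}_j\}$ of $\lesssim_n 2^j$ finite sets; since $\binom{D+n}{n}$ grows like $D^n$, one may take a nonzero $p_{j+1}$ with $\deg p_{j+1} \lesssim_n (2^j)^{1/n} = 2^{((j+1)-1)/n}$ bisecting every $O\cap\mathfrak{G}$. Any component $O'$ of $\R^n\setminus Z(p_1\cdots p_{j+1})$ lies inside a unique $O\in\mathcal{O}_j$, and $p_{j+1}$ has constant sign on $O'$, so $|O'\cap\mathfrak{G}|$ is at most half of $|O\cap\mathfrak{G}|$, hence $\leq S/2^{j+1}$; points of $\mathfrak{G}$ lying on $Z(p_{j+1})$ simply drop out of all cells. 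This advances the induction, and, crucially, the implied constant in the point-count bound never degrades (it stays equal to $1$) because it is halved against a doubling denominator at each step.

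Finally I would stop at $k := \lceil n\log_2 d\rceil$, so $2^k \sim_n d^n$, and set $p := p_1\cdots p_k$. Then $\deg p = \sum_{j=1}^{k}\deg p_j \lesssim_n \sum_{j=1}^{k} 2^{(j-1)/n} + O_n(k) \lesssim_n 2^{k/n} + \log d \lesssim_n d$; the cells of the decomposition are the connected components of $\R^n\setminus Z(p)$, of which there are $\lesssim_n (\deg p)^n \lesssim_n d^n$ by Milnor--Thom, and each contains $\leq S/2^k \lesssim_n S/d^n$ points of $\mathfrak{G}$ by the induction. This yields the upper bounds in the statement; for the matching lower bound on the number of cells, which is not actually needed in the applications, one can multiply $p$ by a product of $\sim d$ hyperplanes in general position, or observe that each bisection at least doubles the number of nonempty cells until the cells are already too small to be split, at which point the claimed point bound holds with room to spare.

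The only genuinely nontrivial ingredient — and the step I would treat most carefully — is the polynomial ham-sandwich input: checking that the Veronese map really does turn degree-$\leq D$ zero sets into hyperplane sections, and tracking that $\binom{D+n}{n} \sim_n D^n$ so that bisecting $\sim 2^j$ sets costs only degree $\sim 2^{j/n}$. That exponent $1/n$ is exactly what makes $\sum_j \deg p_j$ a convergent-type geometric sum and forces the total degree down to $\lesssim d$ rather than something larger; everything else (the degree and point-count bookkeeping, and the cell count) is routine once the Milnor--Thom bound and the ham-sandwich theorem are granted.
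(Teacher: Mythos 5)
Your overall strategy---polynomial ham-sandwich via the Veronese map, iterated about $n\log_2 d$ times, with a Milnor--Thom bound to count cells---is precisely the Guth--Katz argument that the paper cites, and most of the bookkeeping in your write-up is fine. However, there is a genuine gap in the inductive step, created by your choice to bisect the \emph{connected components} $\mathcal{O}_j$ of $\R^n\setminus Z(p_1\cdots p_j)$ rather than the sign-class subsets of $\mathfrak{G}$. Write $D_j:=\deg(p_1\cdots p_j)$. Milnor--Thom gives $|\mathcal{O}_j|\leq C_n D_j^n$, and to bisect that many finite sets you need, via $\binom{D+n}{n}\approx D^n/n!$, a polynomial $p_{j+1}$ with $\deg p_{j+1}\gtrsim (n!\,C_n)^{1/n}D_j$. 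Hence $D_{j+1}\geq \bigl(1+(n!\,C_n)^{1/n}\bigr)D_j$: the total degree grows geometrically at rate $\rho:=1+(n!\,C_n)^{1/n}$. For your claim ``$\deg p_i\lesssim_n 2^{(i-1)/n}$ with constant independent of $i$'' to survive the inductive step, and for $D_k\lesssim_n d$ to hold after the $k\approx n\log_2 d$ steps needed to make $S/2^k\lesssim S/d^n$, one must have $\rho\leq 2^{1/n}$, i.e. $C_n\leq(2^{1/n}-1)^n/n!\approx(\ln 2)^n/(n^n\,n!)$. No Milnor--Thom/Warren-type bound has a constant this small; already the arrangement of $D$ generic hyperplanes shows $C_n\gtrsim 1/n!$, which forces $\rho\geq 2>2^{1/n}$. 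So the implied constant in ``$\deg p_{j+1}\lesssim_n 2^{j/n}$'' is strictly larger than the one in your hypothesis ``$\deg p_i\lesssim_n 2^{(i-1)/n}$'', the constants compound, and the induction does not close: $D_k\gtrsim\rho^k=d^{\,n\log_2\rho}\gg d$.

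The fix is the standard one and is small. At step $j$, instead of the connected components, bisect the at most $2^{j-1}$ sign-class subsets of $\mathfrak{G}$, namely
\begin{equation*}
\mathfrak{G}_\sigma:=\{x\in\mathfrak{G}\,:\,\operatorname{sign}p_i(x)=\sigma_i\ \text{for }i=1,\ldots,j-1\},\qquad \sigma\in\{\pm1\}^{j-1}.
\end{equation*}
These number exactly $2^{j-1}$ (regardless of how many pieces $\R^n\setminus Z(p_1\cdots p_{j-1})$ actually has), so $\deg p_j\lesssim_n 2^{(j-1)/n}$ with a constant independent of $j$, and the geometric sum gives $\deg(p_1\cdots p_k)\lesssim_n 2^{k/n}\lesssim_n d$ after $k$ steps with $2^k\sim_n d^n$. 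Milnor--Thom is then invoked \emph{only once}, at the very end: each connected component of $\R^n\setminus Z(p_1\cdots p_k)$ lies inside a single sign class (each $p_i$ has constant sign there) and so carries at most $S/2^k\lesssim_n S/d^n$ points of $\mathfrak{G}$, while the number of components is $\lesssim_n(\deg p)^n\lesssim_n d^n$. Everything else in your proposal---the Veronese/ham-sandwich mechanism, the convergent geometric sum driven by the exponent $1/n$, the stopping value of $k$, and the remark on the cell-count lower bound---is correct.
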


Decomposing $\R^n$ with the zero set of a polynomial immediately gives us a control over many quantities, especially in three dimensions. In particular, the following holds.

\begin{theorem}\emph{\textbf{(Guth, Katz, \cite[Corollary 2.5]{Guth_Katz_2008})}}\label{2.2.1} \emph{(Corollary of B\'ezout's theorem)} Let $p_1$, $p_2 \in \R[x,y,z]$. If $p_1$, $p_2$ do not have a common factor, then there exist at most $\deg p_1 \cdot \deg p_2$ lines simultaneously contained in the zero set of $p_1$ and the zero set of $p_2$. \end{theorem}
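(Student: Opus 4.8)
The plan is to bound the number of lines in $Z(p_1) \cap Z(p_2)$ by intersecting with a generic plane and applying B\'ezout's theorem for plane curves. Write $d_i = \deg p_i$ and let $Z(p_i) \subseteq \R^3$ denote the zero set of $p_i$; since $p_1, p_2$ have no common factor we may assume $d_1, d_2 \geq 1$ (if some $d_i = 0$, then $Z(p_i) = \emptyset$ and the bound is trivial). The first step is to note that $V := Z(p_1) \cap Z(p_2)$ contains only finitely many lines. Passing to the complexification $V_{\C} := Z_{\C}(p_1) \cap Z_{\C}(p_2)$, and using that $p_1, p_2$ remain coprime in $\C[x,y,z]$, we see that every irreducible component of $V_{\C}$ has dimension at most $1$, since a $2$-dimensional component would be cut out by an irreducible polynomial dividing both $p_1$ and $p_2$. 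A line is irreducible of dimension $1$, so it cannot be a proper subset of a component of $V_{\C}$; hence each line contained in $V$ complexifies to one of the finitely many $1$-dimensional components of $V_{\C}$, and distinct real lines give distinct components.

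Let $\ell_1, \dots, \ell_m$ be the lines contained in $V$; the goal is to show $m \leq d_1 d_2$. I would then choose an affine plane $H \subseteq \R^3$ in sufficiently general position, so that: (i) $H$ meets each $\ell_j$ in exactly one point $q_j$, and $q_1, \dots, q_m$ are pairwise distinct --- this excludes only finitely many directions for $H$ and requires $H$ to avoid the finitely many points lying on two of the $\ell_j$; (ii) $H \not\subseteq Z(p_1)$ and $H \not\subseteq Z(p_2)$, so that the restrictions $p_1|_H$ and $p_2|_H$ are nonzero polynomials of degrees $\leq d_1$ and $\leq d_2$ on $H \cong \R^2$; and (iii) $V_{\C} \cap H_{\C}$ is finite, which holds for generic $H$ since $V_{\C}$ has dimension at most $1$. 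The key claim is then that $p_1|_H$ and $p_2|_H$ are coprime: if they had a common nonconstant factor $q$, then the plane curve $Z_{\C}(q)$ would be a positive-dimensional subset of $Z_{\C}(p_1) \cap Z_{\C}(p_2) \cap H_{\C} = V_{\C} \cap H_{\C}$, contradicting (iii).

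Finally, since $p_1|_H$ and $p_2|_H$ are coprime nonzero polynomials on $H$, B\'ezout's theorem for plane curves gives that $Z(p_1|_H) \cap Z(p_2|_H)$ has at most $d_1 d_2$ points; as $q_1, \dots, q_m$ are distinct points of this set, $m \leq d_1 d_2$, as claimed. The steps I expect to be most delicate are the genericity arguments: that a generic plane section of two surfaces without a common component still produces curves without a common component (handled above via the dimension of $V_{\C}$), and --- inside the proof of plane B\'ezout --- that a generic choice of coordinates on $H$ separates the intersection points, so that the resultant in one variable, which has degree $\leq d_1 d_2$, detects each of them exactly once; everything else is bookkeeping. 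A more streamlined alternative, if one invokes the refined B\'ezout inequality, bypasses the slicing altogether: that inequality bounds the sum of the degrees of the irreducible components of $V_{\C}$ by $d_1 d_2$, and since each line in $V$ is itself a component of $V_{\C}$ of degree $1$, counting components yields the bound directly.
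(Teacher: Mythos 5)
Your proof is correct, but note that the paper does not prove this statement at all: it is quoted verbatim from Guth and Katz \cite[Corollary 2.5]{Guth_Katz_2008}, so there is no internal proof to compare against. Your argument --- slice with a generic plane, check via the dimension count on the complexified intersection $V_{\C}$ that the restrictions $p_1|_H$, $p_2|_H$ stay coprime, then apply planar B\'ezout to the distinct points $q_j$ --- is essentially the original Guth--Katz argument, and your refined-B\'ezout alternative (each common line is a degree-one component of $V_{\C}$, and the sum of component degrees is at most $d_1d_2$) is also a valid shortcut. The only step you assert without justification is that coprimality of $p_1,p_2$ over $\R$ implies coprimality over $\C$; this is standard (an irreducible common factor $h$ over $\C$ yields either a real associate of $h$ or the real polynomial $h\bar{h}$ as a common factor over $\R$), but it deserves the one-line remark since your whole dimension argument rests on it.
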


An application of this result enables us to bound the number of critical lines of a real algebraic hypersurface in $\R^3$.

\begin{definition} Let $p \in \R[x,y,z]$ be a non-zero polynomial of degree $\leq d$. Let $Z$ be the zero set of $p$. 

We denote by $p_{sf}$ the square-free polynomial we end up with, after eliminating all the squares appearing in the expression of $p$ as a product of irreducible polynomials in $\R[x,y,z]$.

A \emph{critical point} $x$ of $Z$ is a point of $Z$ for which $\nabla{p_{sf}}(x)=0$. Any other point of $Z$ is called a \emph{regular point} of $Z$. A line contained in $Z$ is called a \emph{critical line} if each point of the line is a critical point of $Z$.
\end{definition}

Note that, for any $p \in \R[x,y,z]$, the polynomials $p$ and $p_{sf}$ have the same zero set.

Moreover, if $x$ is a regular point of the zero set $Z$ of a polynomial $p \in \R[x,y,z]$, then, by the implicit function theorem, $Z$ is a manifold locally around $x$ and the tangent space to $Z$ at $x$ is well-defined; it is, in fact, the plane perpendicular to $\nabla{p_{sf}}(x)$ that passes through $x$.

An immediate corollary of Theorem \ref{2.2.1} is the following.

\begin{proposition}\emph{\textbf{(Guth, Katz, \cite[Proposition 3.1]{Guth_Katz_2008})}}\label{2.2.3} Let $p \in \R[x,y,z]$ be a non-zero polynomial of degree $\leq d$. Let $Z$ be the zero set of $p$. Then, $Z$ contains at most $d^2$ critical lines. \end{proposition}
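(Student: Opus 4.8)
The plan is to count the critical lines by applying B\'ezout's theorem (Theorem \ref{2.2.1}) to $p_{sf}$ and a suitably chosen partial derivative of $p_{sf}$, working one irreducible factor of $p_{sf}$ at a time. First I would dispose of the trivial case: if $p$ (hence $p_{sf}$) is a non-zero constant then $Z=\emptyset$ and there is nothing to prove, so assume $p_{sf}$ is non-constant. Using that $\R[x,y,z]$ is a unique factorisation domain, write $p_{sf}=q_1\cdots q_k$ as a product of pairwise non-proportional irreducible polynomials; the square-free hypothesis guarantees there are no repetitions, and $\sum_{j=1}^{k}\deg q_j=\deg p_{sf}\le d$.

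The key algebraic step comes next. Since $\R$ has characteristic zero and each $q_j$ is non-constant, at least one of $\partial_x q_j,\partial_y q_j,\partial_z q_j$ is not identically zero; fix such a derivative and call it $\partial_{i(j)}$. I claim that $q_j\nmid\partial_{i(j)}p_{sf}$. Indeed, writing $p_{sf}=q_j\cdot r_j$ with $\gcd(q_j,r_j)=1$ (again by square-freeness), the product rule gives $\partial_{i(j)}p_{sf}=(\partial_{i(j)}q_j)\,r_j+q_j\,(\partial_{i(j)}r_j)$; if $q_j$ divided the left-hand side it would divide $(\partial_{i(j)}q_j)\,r_j$, and, being coprime to $r_j$, it would divide $\partial_{i(j)}q_j$, which is impossible since $\partial_{i(j)}q_j$ is non-zero of degree strictly less than $\deg q_j$. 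In particular $\partial_{i(j)}p_{sf}\not\equiv 0$, and, $q_j$ being irreducible, $q_j$ and $\partial_{i(j)}p_{sf}$ have no common factor.

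Now I would tie the critical lines to these coprime pairs. Let $\ell$ be a critical line of $Z$. Since $p_{sf}$ vanishes on $\ell$, restricting the factorisation $p_{sf}=\prod_j q_j$ to an affine parametrisation of $\ell$ gives a univariate polynomial identity $\prod_j\big(q_j|_\ell\big)\equiv 0$, so $q_j|_\ell\equiv 0$ for some $j$, i.e. $\ell\subseteq Z(q_j)$. On the other hand, every point of $\ell$ is critical, so $\nabla p_{sf}$ — and in particular $\partial_{i(j)}p_{sf}$ — vanishes on $\ell$, i.e. $\ell\subseteq Z(\partial_{i(j)}p_{sf})$. Thus $\ell$ is a line lying simultaneously in the zero sets of $q_j$ and of $\partial_{i(j)}p_{sf}$, two polynomials without a common factor, and by Theorem \ref{2.2.1} the number of such lines is at most $\deg q_j\cdot\deg\partial_{i(j)}p_{sf}\le(d-1)\deg q_j\le d\,\deg q_j$. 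Summing this bound over $j=1,\dots,k$ — which over-counts, since a critical line may lie in several of the $Z(q_j)$, but that only helps — yields at most $d\sum_{j}\deg q_j=d\,\deg p_{sf}\le d^2$ critical lines.

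The routine ingredients still to be pinned down are the UFD structure of $\R[x,y,z]$ (so that the factorisation and the coprimality statements make sense), the fact that a polynomial vanishing on a real line vanishes on it identically as a polynomial when restricted to a parametrisation (this is what legitimises splitting $\prod_j(q_j|_\ell)\equiv 0$), and the characteristic-zero input that a non-constant polynomial has a non-vanishing partial derivative. The one genuinely delicate point — and where the square-free hypothesis is really used twice — is the second step: square-freeness of $p_{sf}$ does \emph{not} make it coprime to a single fixed partial derivative (for instance $p_{sf}=xy$ shares the factor $y$ with $\partial_x p_{sf}$), so it is essential to choose the partial derivative separately for each irreducible factor and to run B\'ezout's theorem factor by factor rather than globally.
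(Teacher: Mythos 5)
Your proof is correct and takes essentially the same route as the paper: factor $p_{sf}$ into distinct irreducibles, choose for each irreducible factor a partial derivative of $p_{sf}$ coprime to that factor, note every critical line lies in some $Z(q_j)\cap Z(\partial_{i(j)}p_{sf})$, and apply the B\'ezout-type bound (Theorem \ref{2.2.1}) factor by factor. The only difference is cosmetic: you spell out, via the product rule and a degree count, why $q_j\nmid\partial_{i(j)}p_{sf}$, whereas the paper states this coprimality as a one-line consequence of square-freeness; your remark that the derivative must be chosen per factor rather than globally is exactly the point the paper is implicitly making.
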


\begin{proof} Since there are no squares in the expansion of $p_{sf}$ as a product of irreducible polynomials in $\R[x,y,z]$, it follows that $p_{sf}$ and $\nabla{p_{sf}}$ have no common factor. In other words, if $p_{sf}=p_1\cdots p_k$, where, for all $i \in \{1,\ldots,k\}$, $p_i$ is an irreducible polynomial in $\R[x,y,z]$, then, for all $i\in\{1,\ldots,k\}$, there exists some $g_i \in \Big\{\frac{\partial p_{sf}}{\partial x},\frac{\partial p_{sf}}{\partial y},\frac{\partial p_{sf}}{\partial z}\Big\}$, such that $p_i$ is not a factor of $g_i$. 

Now, let $l$ be a critical line of $Z$. It follows that $l$ lies in the zero set of $p_{sf}$, and therefore in the union of the zero sets of $p_1, \ldots, p_k \in \R[x,y,z]$; so, there exists $j \in \{1,\ldots,k\}$, such that $l$ lies in the zero set of $p_j$. However, since $l$ is a critical line of $Z$, it is also contained in the zero set of $\nabla{p_{sf}}$, and thus in the zero set of $g_j$ as well. Therefore, $l$ lies simultaneously in the zero sets of the polynomials $p_j$ and $g_j\in \R[x,y,z]$.

It follows from the above that the number of critical lines of $Z$ is equal to at most $\sum_{i=1,\ldots,k}L_i$, where, for all $i \in \{1,\ldots,k\}$, $L_i$ is the number of lines simultaneously contained in the zero set of $p_i$ and the zero set of $g_i$ in $\R^3$. And since the polynomials $p_i$ and $g_i \in \R[x,y,z]$ do not have a common factor, Theorem \ref{2.2.1} implies that $L_i\leq \deg p_i \cdot \deg g_i \leq \deg p_i \cdot d$, for all $i \in \{1,\ldots,k\}$. Thus, the number of critical lines of $Z$ is $\leq \sum_{i=1,\ldots,k}\deg p_i \cdot d\leq \deg p \cdot d=d^2$.

\end{proof}

\section{Proof} \label{section4}

We will now prove Theorem \ref{multsimple}, and Theorem \ref{theoremmult2} will immediately follow (as we have already mentioned, it is an application of Theorem \ref{multsimple} for $(N_1,N_2,N_3)=(1,1,1)$).

\textbf{Theorem \ref{multsimple}.} \textit{Let $\mathfrak{L}_1$, $\mathfrak{L}_2$, $\mathfrak{L}_3$ be finite collections of $L_1$, $L_2$ and $L_3$, respectively, lines in $\R^3$. Then,}
\begin{displaymath} |J_{N_1,N_2,N_3}(\mathfrak{L}_1,\mathfrak{L}_2,\mathfrak{L}_3)|\leq c \cdot \frac{(L_1L_2L_3)^{1/2}}{(N_1N_2N_3)^{1/2}}, \; \forall\;(N_1,N_2,N_3) \in \N_{+}^3, \end{displaymath}
\textit{where $c$ is a constant independent of $\mathfrak{L}_1$, $\mathfrak{L}_2$ and $\mathfrak{L}_3$.}

\begin{proof} The proof will be achieved by induction on $L_1$, $L_2$ and $L_3$. Indeed, fix $(M_1,M_2,M_3) \in \N_+^3$. For a constant $c \geq 1$ that will be specified later:

(i) For $L_1=L_2=L_3=1$,\begin{displaymath} |J_{N_1,N_2,N_3}(\mathfrak{L}_1, \mathfrak{L}_2, \mathfrak{L}_3)|  \leq c \cdot \frac{(L_1L_2L_3)^{1/2}}{(N_1N_2N_3)^{1/2}}, \; \forall\;(N_1,N_2,N_3) \in \R_{+}^3,\end{displaymath} for any collections $\mathfrak{L}_1$, $\mathfrak{L}_2$, $\mathfrak{L}_3$ in $\R^3$, each consisting of 1 line. 

Clearly, this is true for any constant $c \geq 1$.

(ii) Suppose that \begin{displaymath} |J_{N_1,N_2,N_3}(\mathfrak{L}_1, \mathfrak{L}_2, \mathfrak{L}_3)|  \leq c \cdot \frac{(L_1L_2L_3)^{1/2}}{(N_1N_2N_3)^{1/2}}, \; \forall\;(N_1,N_2,N_3) \in \R_{+}^3,\end{displaymath} for any finite collections $\mathfrak{L}_1$, $\mathfrak{L}_2$, $\mathfrak{L}_3$ of $L_1$, $L_2$ and $L_3$, respectively, lines in $\R^3$, such that $L_1\lneq M_1$, $L_2 \lneq M_2$ and $L_3 \lneq M_3$.

(iii) We will prove that  \begin{displaymath} |J_{N_1,N_2,N_3}(\mathfrak{L}_1, \mathfrak{L}_2, \mathfrak{L}_3)|  \leq c \cdot \frac{(L_1L_2L_3)^{1/2}}{(N_1N_2N_3)^{1/2}}, \; \forall\;(N_1,N_2,N_3) \in \R_{+}^3,\end{displaymath} for any finite collections $\mathfrak{L}_1$, $\mathfrak{L}_2$, $\mathfrak{L}_3$ of $L_1$, $L_2$ and $L_3$, respectively, lines in $\R^3$, such that $L_j= M_j$ for some $j \in \{1,2,3\}$ and $L_i \lneq M_i$, $L_k \lneq M_k$ for $\{i,k\}=\{1,2,3\}\setminus \{j\}$.

Indeed, fix such collections $\mathfrak{L}_1$, $\mathfrak{L}_2$ and $\mathfrak{L}_3$ of lines and let $(N_1,N_2,N_3) \in \R_{+}^3$.

For simplicity, let $$\mathfrak{G}:= J_{N_1,N_2,N_3}(\mathfrak{L}_1, \mathfrak{L}_2, \mathfrak{L}_3)$$and$$S:=|J_{N_1,N_2,N_3}(\mathfrak{L}_1, \mathfrak{L}_2, \mathfrak{L}_3)|.$$

We assume that \begin{displaymath} \frac{L_1}{\lceil N_1 \rceil} \leq \frac{L_2}{\lceil N_2 \rceil} \leq \frac{L_3}{\lceil N_3 \rceil}. \end{displaymath}

By the definition of $\mathfrak{G}$, if $x \in \mathfrak{G}$, then there exist at least $\lceil N_1 \rceil$ lines of $\mathfrak{L}_1$ and at least $\lceil N_2 \rceil$ lines of $\mathfrak{L}_2$ passing through $x$. Thus, the quantity $S\lceil N_1 \rceil \lceil N_2 \rceil$ is equal to at most the number of pairs of the form $(l_1,l_2)$, where $l_1 \in \mathfrak{L}_1$, $l_2 \in \mathfrak{L}_2$ and the lines $l_1$ and $l_2$ pass through the same point of $\mathfrak{G}$. Therefore, $S\lceil N_1 \rceil \lceil N_2 \rceil$ is equal to at most the number of all the pairs of the form $(l_1,l_2)$, where $l_1 \in \mathfrak{L}_1$ and $l_2 \in \mathfrak{L}_2$, i.e. to at most $L_1L_2$. So,
\begin{displaymath}S\lceil N_1\rceil \lceil N_2 \rceil \leq L_1L_2,\end{displaymath}
and therefore
\begin{displaymath} \frac{L_1L_2}{S\lceil N_1\rceil\lceil N_2\rceil}\geq 1. \end{displaymath}

Thus, $d:=A\frac{L_1L_2}{S\lceil N_1\rceil\lceil N_2\rceil}$ is a quantity $>1$ for $A>1$. We therefore assume that $A>1$, and we will specify its value later. Now, applying the Guth-Katz polynomial method for this $d>1$ and the finite set of points $\mathfrak{G}$, we deduce that there exists a non-zero polynomial $p\in \R[x,y,z]$, of degree $\leq d$, whose zero set $Z$ decomposes $\R^3$ in $\lesssim d^3$ cells, each of which contains $\lesssim Sd^{-3}$ points of $\mathfrak{G}$. We can assume that this polynomial is square-free, as eliminating the squares of $p$ does not inflict any change on its zero set. 

Let us assume that there are $\geq 10^{-8}S$ points of $\mathfrak{G}$ in the union of the interiors of the cells; by choosing $A$ to be a sufficiently large constant, we will be led to a contradiction.

Indeed, there are $\gtrsim S$ points of $\mathfrak{G}$ in the union of the interiors of the cells. However, there exist $\lesssim d^3$ cells in total, each containing $\lesssim Sd^{-3}$ points of $\mathfrak{G}$. Therefore, there exist $\gtrsim d^3$ cells, with $\gtrsim Sd^{-3}$ points of $\mathfrak{G}$ in the interior of each. We call the cells with this property ``full cells". 

For every full cell,  let $\mathfrak{G}_{cell}$ be the set of points of $\mathfrak{G}$ in the interior of the cell, $\mathfrak{L}_{1,cell}$ and $\mathfrak{L}_{2,cell}$ the sets of lines of $\mathfrak{L}_1$ and $\mathfrak{L}_2$, respectively, each containing at least one point of $\mathfrak{G}_{cell}$, $S_{cell}:=|\mathfrak{G}_{cell}|$, $L_{1,cell}:=|\mathfrak{L}_{1,cell}|$ and $L_{2,cell}:= |\mathfrak{L}_{2,cell}|$. Now,
\begin{displaymath}S_{cell}\lceil N_1\rceil \lceil N_2 \rceil \lesssim L_{1,cell}L_{2,cell}, \end{displaymath}
as the quantity $S_{cell}\lceil N_1\rceil \lceil N_2 \rceil$ is equal to at most the number of pairs of the form $(l_1,l_2)$, where $l_1 \in \mathfrak{L}_{1,cell}$, $l_2 \in \mathfrak{L}_{2,cell}$ and the lines $l_1$ and $l_2$ pass through the same point of $\mathfrak{G}_{cell}$. Thus, it is equal to at most $L_{1,cell}L_{2,cell}$, which is the number of all the pairs of the form $(l_1,l_2)$, where $l_1 \in \mathfrak{L}_{1,cell}$ and $l_2 \in \mathfrak{L}_{2,cell}$.

Therefore,
\begin{displaymath}(L_{1,cell}L_{2,cell})^{1/2} \gtrsim S_{cell}^{1/2} (\lceil N_1 \rceil \lceil N_2 \rceil )^{1/2} \gtrsim \frac{S^{1/2}}{d^{3/2}}(\lceil N_1\rceil\lceil N_2\rceil)^{1/2}. \end{displaymath}
But, for $i=1,2$, each of the lines of $\mathfrak{L}_{i,cell}$ intersects the boundary of the cell at at least one point $x$, with the property that the induced topology from $\R^3$ to the intersection of the line with the closure of the cell contains an open neighbourhood of $x$; therefore, there are $ \gtrsim L_{i,cell}$ incidences of this form between $\mathfrak{L}_{i,cell}$ and the boundary of the cell. Also, the union of the boundaries of all the cells is the zero set $Z$ of $p$, and if $x$ is a point of $Z$ which belongs to a line intersecting the interior of a cell, such that the induced topology from $\R^3$ to the intersection of the line with the closure of the cell contains an open neighbourhood of $x$,  then there exists at most one other cell whose interior is also intersected by the line and whose boundary contains $x$, such that the induced topology from $\R^3$ to the intersection of the line with the closure of that cell contains an open neighbourhood of $x$. So, if $I_i$ is the number of incidences between $Z$ and the lines of $\mathfrak{L}_{i}$ not lying in $Z$, $I_{i,cell}$ is the number of incidences between $\mathfrak{L}_{i,cell}$ and the boundary of the cell, and $\mathcal{C}$ is the set of all the full cells (which, in our case, has cardinality $\gtrsim d^3$), then the above imply that
\begin{displaymath} I_i \gtrsim \sum_{cell \in \mathcal{C}} I_{i,cell} \gtrsim \sum_{cell \in \mathcal{C}}L_{i,cell},\text{ for }i=1,2.\end{displaymath}
On the other hand, if a line does not lie in the zero set $Z$ of $p$, then it intersects $Z$ in $\leq d$ points. Thus,
\begin{displaymath} I_i \leq L_{i} \cdot d,\text{ for }i=1,2. \end{displaymath}
Therefore, 
\begin{displaymath} \sum_{cell \in \mathcal{C}} \frac{S^{1/2}}{d^{3/2}}(\lceil N_1\rceil\lceil N_2 \rceil)^{1/2} \lesssim \sum_{cell \in \mathcal{C}} (L_{1,cell}L_{2,cell})^{1/2} \lesssim \end{displaymath}
\begin{displaymath} \lesssim\Bigg(\sum_{cell \in\mathcal{C}} L_{1,cell}\Bigg)^{1/2} \Bigg(\sum_{cell \in\mathcal{C}}L_{2,cell}\Bigg)^{1/2}\lesssim  I_1^{1/2}I_2^{1/2}\lesssim (L_1d)^{1/2}(L_2d)^{1/2} \sim\end{displaymath}
\begin{displaymath} \sim (L_1L_2)^{1/2}d.\end{displaymath}

But the full cells number $\gtrsim d^3$. Thus, 
\begin{displaymath} S^{1/2}(\lceil N_1\rceil\lceil N_2\rceil)^{1/2}d^{3/2} \lesssim (L_1L_2)^{1/2}d,\end{displaymath}
from which we obtain
\begin{displaymath} d \lesssim \frac{L_1L_2}{S\lceil N_1 \rceil\lceil N_2\rceil}, \end{displaymath}
which in turn gives $A\lesssim 1$. In other words, there exists some explicit constant $C$, such that $A \leq C$. By fixing $A$ to be a number larger than $C$ (and of course larger than 1, to have that $d>1$), we are led to a contradiction.

Therefore, there are $< 10^{-8}S$ points of $\mathfrak{G}$ in the union of the interiors of the cells. Thus, if $\mathfrak{G}_1$ denotes the set of points in $\mathfrak{G}$ which lie in $Z$, it holds that $|\mathfrak{G}_1|>(1-10^{-8})S$. 

Now, by the definition of $\mathfrak{G}$, for each $x \in \mathfrak{G}$ we can fix collections $\mathfrak{L}_1(x) \subseteq \mathfrak{L}_1$, $\mathfrak{L}_2(x) \subseteq \mathfrak{L}_2$ and $\mathfrak{L}_3(x) \subseteq \mathfrak{L}_3$ of lines passing through $x$, such that $|\mathfrak{L}_1(x)|= \lceil N_1 \rceil$, $|\mathfrak{L}_2(x)|= \lceil N_2 \rceil$ and $|\mathfrak{L}_3(x)|= \lceil N_3 \rceil$, and, if $l_1 \in \mathfrak{L}_1(x)$, $l_2 \in \mathfrak{L}_2(x)$ and $l_3 \in \mathfrak{L}_3(x)$, then the directions of the lines $l_1$, $l_2$ and $l_3$ span $\R^3$.

We now continue by refining the sets of lines and points we are interested in, based on the incidences between each $x \in \mathfrak{G}_1$ and the lines only in $\mathfrak{L}_1(x)$, $\mathfrak{L}_2(x)$ and $\mathfrak{L}_3(x)$, rather than all the lines in $\mathfrak{L}_1$, $\mathfrak{L}_2$ and $\mathfrak{L}_3$ passing through $x$.

In particular, for all $j \in \{1,2,3\}$, we can define $\mathfrak{L}_j':=\bigg\{l \in \mathfrak{L}_j:\Big |\Big\{x \in \mathfrak{G}_1: l \in \mathfrak{L}_j(x)\Big |\geq \frac{1}{100} \frac{S\lceil N_j\rceil}{L_j}\Big\}\bigg\}$. In other words, for all $j \in \{1,2,3\}$, $\mathfrak{L}_j'$ is the set of lines in $\mathfrak{L}_j$, each of which belongs to $\mathfrak{L}_j(x)$ for at least $\frac{1}{100} \frac{S\lceil N_j \rceil }{L_j}$ points $x \in \mathfrak{G}_1$.

Moreover, for all $j \in \{1,2,3\}$, for any subset $\mathcal{G}$ of $\mathfrak{G}$ and any subset $\mathcal{L}$ of $\mathfrak{L}_j$, we denote by $I^{(j)}_{\mathcal{G},\mathcal{L}}$ the number of pairs of the form $(x, l)$, where $x \in \mathcal{G}$ and $l \in \mathfrak{L}_j(x) \cap\mathcal{L}$; note that the set of these pairs is a subset of the set of incidences between $\mathcal{G}$ and $\mathcal{L}$.

We now analyse the situation.

Let $j\in \{1,2,3\}$. Each point $x \in \mathfrak{G}_1$ intersects $\lceil N_j \rceil$ lines of $\mathfrak{L}_j(x)$, which is a subset of $\mathfrak{L}_j$. Thus,  \begin{displaymath} I^{(j)}_{\mathfrak{G}_1,\mathfrak{L}_j} > (1-10^{-8})S\lceil N_j \rceil.\end{displaymath}
On the other hand, each line $l \in \mathfrak{L}_j \setminus \mathfrak{L}_j'$ belongs to $\mathfrak{L}_j(x)$ for fewer than $\frac{1}{100} \frac{S\lceil N_j\rceil}{L_j}$ points $x \in \mathfrak{G}_1$, so
\begin{displaymath}I^{(j)}_{\mathfrak{G}_1,\mathfrak{L}_j\setminus \mathfrak{L}'_j} \leq  |\mathfrak{L}_j\setminus \mathfrak{L}'_j| \cdot \frac{S\lceil N_j \rceil}{100L_j} \leq \frac{1}{100}S\lceil N_j\rceil.\end{displaymath}
Therefore, since $ I^{(j)}_{\mathfrak{G}_1, \mathfrak{L}_j}=I^{(j)}_{\mathfrak{G}_1,\mathfrak{L}_j \setminus \mathfrak{L}'_j}+I^{(j)}_{\mathfrak{G}_1,\mathfrak{L}'_j}$, it follows that 
\begin{displaymath}I^{(j)}_{\mathfrak{G}_1,\mathfrak{L}'_j}> (1-10^{-8}-10^{-2})S\lceil N_j \rceil, \end{displaymath}
for all $j \in \{1,2,3\}$.

Now, for all $j \in \{1,2,3\}$, we define $\mathfrak{G}'_j:=\Big\{x \in \mathfrak{G}_1: |\mathfrak{L}_j(x) \cap \mathfrak{L}_j'|\geq \frac{10^{-8}}{1+10^{-8}}(1-10^{-8}-10^{-2})\lceil N_j \rceil\Big\}$. In other words, for all $j \in \{1,2,3\}$, $x \in \mathfrak{G}_j'$ if and only if $x \in \mathfrak{G}_1$ and $\mathfrak{L}_j(x)$ contains at least $\frac{10^{-8}}{1+10^{-8}}(1-10^{-8}-10^{-2})\lceil N_j \rceil$ lines of $\mathfrak{L}'_j$.

Let $j \in \{1,2,3\}$. Since each point $x \in \mathfrak{G}_1 \setminus \mathfrak{G}_j'$ intersects fewer than $\frac{10^{-8}}{1+10^{-8}}(1-10^{-8}-10^{-2})\lceil N_j \rceil$ lines of $\mathfrak{L}_j(x) \cap \mathfrak{L}'_j$, it follows that \begin{displaymath}I^{(j)}_{\mathfrak{G}_1 \setminus \mathfrak{G}'_j, \mathfrak{L}'_j} < |\mathfrak{G}_1 \setminus \mathfrak{G}'_j|\frac{10^{-8}}{1+10^{-8}}(1-10^{-8}-10^{-2})\lceil N_j \rceil\leq  \end{displaymath}
\begin{displaymath} \leq \frac{10^{-8}}{1+10^{-8}}(1-10^{-8}-10^{-2})S\lceil N_j \rceil.\end{displaymath}
Therefore, since $I^{(j)}_{\mathfrak{G}_1, \mathfrak{L}'_j}=I^{(j)}_{\mathfrak{G}_1 \setminus \mathfrak{G}'_j, \mathfrak{L}'_j}+I^{(j)}_{\mathfrak{G}'_j, \mathfrak{L}'_j}$, we obtain
\begin{displaymath}I^{(j)}_{\mathfrak{G}'_j, \mathfrak{L}'_j} > \frac{1-10^{-8}-10^{-2}}{1+10^{-8}}S\lceil N_j \rceil. \end{displaymath}
At the same time, however, $|\mathfrak{L}_j(x)| =\lceil N_j \rceil$ for all $x \in \mathfrak{G}_j'$, and thus  $I^{(j)}_{\mathfrak{G}'_j, \mathfrak{L}'_j} \leq |\mathfrak{G}'_j|\lceil N_j\rceil$. Therefore, $ \frac{1-10^{-8}-10^{-2}}{1+10^{-8}}S\lceil N_j \rceil< |\mathfrak{G}'_j|\lceil N_j\rceil$, which implies that
\begin{displaymath} |\mathfrak{G}'_j|\geq \frac{1-10^{-8}-10^{-2}}{1+10^{-8}}S, \end{displaymath}
for all $j \in \{1,2,3\}$. In other words, for all $j\in \{1,2,3\}$, there exist at least $\frac{1-10^{-8}-10^{-2}}{1+10^{-8}}S$ points $x \in \mathfrak{G}_1$ such that $x$ intersects at least $\frac{10^{-8}}{1+10^{-8}}(1-10^{-8}-10^{-2})\lceil N_j\rceil$ lines of $\mathfrak{L}_j(x) \cap \mathfrak{L}'_j$.

But \begin{displaymath} |\mathfrak{G}_1'\cup \mathfrak{G}_2' \cup \mathfrak{G}_3'|=|\mathfrak{G}_1'|+|\mathfrak{G}_2'|+|\mathfrak{G}_3'|-|\mathfrak{G}_1'\cap\mathfrak{G}_2'|-|\mathfrak{G}_2'\cap\mathfrak{G}_3'|-|\mathfrak{G}_1'\cap\mathfrak{G}_3'|+|\mathfrak{G}_1'\cap\mathfrak{G}_2'\cap\mathfrak{G}_3'|,\end{displaymath} and thus 
$$|\mathfrak{G}_1'\cap\mathfrak{G}_2'\cap\mathfrak{G}_3'|=$$
$$=|\mathfrak{G}_1'\cup \mathfrak{G}_2' \cup \mathfrak{G}_3'|-(|\mathfrak{G}_1'|+|\mathfrak{G}_2'|+|\mathfrak{G}_3'|)+(|\mathfrak{G}_1'\cap\mathfrak{G}_2'|+|\mathfrak{G}_2'\cap\mathfrak{G}_3'|+|\mathfrak{G}_1'\cap\mathfrak{G}_3'|)\geq
$$
$$\geq |\mathfrak{G}_1'|-(|\mathfrak{G}_1'|+|\mathfrak{G}_2'|+|\mathfrak{G}_3'|)+$$
$$+\Big((|\mathfrak{G}_1'|+|\mathfrak{G}_2'|-|\mathfrak{G}_1'\cup\mathfrak{G}_2'|)+(|\mathfrak{G}_2'|+|\mathfrak{G}_3'|-|\mathfrak{G}_2'\cup\mathfrak{G}_3'|)+(|\mathfrak{G}_1'|+|\mathfrak{G}_3'|-|\mathfrak{G}_1'\cup\mathfrak{G}_3'|)\Big)\geq
$$
$$\geq2|\mathfrak{G}_1'|+|\mathfrak{G}_2'|+|\mathfrak{G}_3'|-|\mathfrak{G}_1'\cup\mathfrak{G}_2'|-|\mathfrak{G}_2'\cup\mathfrak{G}_3'|-|\mathfrak{G}_1'\cup\mathfrak{G}_3'|\geq
$$
\begin{displaymath}\geq4\cdot\frac{1-10^{-8}-10^{-2}}{1+10^{-8}}S-3S=\end{displaymath} 
\begin{displaymath}= \frac{4(1-10^{-8}-10^{-2})-3(1+10^{-8})}{1+10^{-8}}S=\frac{1-7 \cdot 10^{-8}-4 \cdot 10^{-2}}{1+10^{-8}}S\geq 
\end{displaymath}
\begin{displaymath} \geq \frac{1-8\cdot 10^{-2}}{1+10^{-8}}S;\end{displaymath}
in other words, there exist at least $\frac{1-8\cdot 10^{-2}}{1+10^{-8}}S$ points $x \in \mathfrak{G}_1$ intersecting at least $\frac{10^{-8}}{1+10^{-8}}(1-10^{-8}-10^{-2})\lceil N_j\rceil$ lines of $\mathfrak{L}_j(x) \cap \mathfrak{L}'_j$, simultaneously for all $j\in\{1,2,3\}$.

\textbf{Case 1:} Suppose that, for some $j\in \{1,2,3\}$, it holds that $\frac{1}{10^{100}} \frac{S\lceil N_j\rceil}{L_j}\leq d$. Then $\frac{S\lceil N_j\rceil}{L_j} \lesssim \frac{L_1L_2}{S\lceil N_1\rceil\lceil N_2\rceil}$, which implies that
\begin{displaymath} S\lesssim \Bigg(\frac{L_1L_2}{\lceil N_1\rceil \lceil N_2\rceil}\Bigg)^{1/2}\Bigg(\frac{L_j}{\lceil N_j\rceil}\Bigg)^{1/2} \lesssim \frac{(L_1L_2L_3)^{1/2}}{(\lceil N_1\rceil \lceil N_2\rceil \lceil N_3\rceil )^{1/2}}\lesssim \frac{(L_1L_2L_3)^{1/2}}{(N_1N_2N_3)^{1/2}}. \end{displaymath}

\textbf{Case 2:} Suppose that $\frac{1}{10^{100}} \frac{S\lceil N_j\rceil}{L_j}> d$, for all $j=1,2,3$. Then, each line in $\mathfrak{L}_1'$, $\mathfrak{L}_2'$ and $\mathfrak{L}_3'$ lies in $Z$, therefore each point in $\mathfrak{G}_1'\cap \mathfrak{G}_2' \cap \mathfrak{G}_3'$ is a critical point of $Z$. 

Now, for all $j \in \{1,2,3\}$, we define $\mathfrak{L}_{j,1}:=\big\{l \in \mathfrak{L}_j: |\{x \in \mathfrak{G}_1' \cap \mathfrak{G}_2' \cap \mathfrak{G}_3': l \in \mathfrak{L}_j(x)\}|\geq\frac{1}{10^{100}}S\lceil N_{j}\rceil L_{j}^{-1}\big\}$. In other words, for all $j \in \{1,2,3\}$, $\mathfrak{L}_{j,1}$ is the set of lines in $\mathfrak{L}_{j}$, each of which belongs to $\mathfrak{L}_j(x)$ for at least $\frac{1}{10^{100}}S\lceil N_{j}\rceil L_{j}^{-1}$ points $x \in \mathfrak{G}_1' \cap \mathfrak{G}_2' \cap \mathfrak{G}_3'$. 

$\bullet$ Suppose that, for some $j \in \{1,2,3\}$, $|\mathfrak{L}_{j,1}|\geq \frac{L_j}{10^{1000}}$. Each line in $\mathfrak{L}_{j,1}$ contains more than $d$ critical points of $Z$, it is therefore a critical line. Thus, \begin{displaymath}\frac{L_j}{10^{1000}} \leq d^2, \end{displaymath}so \begin{displaymath} L_j \lesssim \frac{(L_1L_2)^2}{S^2(\lceil N_1\rceil \lceil N_2\rceil)^{2}}, \end{displaymath} from which it follows that 
\begin{displaymath}S\lesssim \frac{L_1L_2}{\lceil N_1\rceil \lceil N_2\rceil}\frac{1}{L_j^{1/2}} \lesssim \frac{(L_1L_2L_3)^{1/2}}{(\lceil N_1\rceil \lceil N_2\rceil \lceil N_3\rceil )^{1/2}}\lesssim \frac{(L_1L_2L_3)^{1/2}}{(N_1N_2N_3)^{1/2}}.\end{displaymath}
We are now ready to define the constant $c$ appearing in our induction process. Indeed, there exists some constant $c' \geq 1$, independent of $\mathfrak{L}_1$, $\mathfrak{L}_2$, $\mathfrak{L}_3$ and $N_1$, $N_2$ and $N_3$, such that \begin{displaymath} S \leq c' \cdot \frac{(L_1L_2L_3)^{1/2}}{(N_1N_2N_3)^{1/2}} \end{displaymath} in all the cases dealt with so far. Let $c$ be such a constant $c'$.

$\bullet$ Suppose that, for all $i \in \{1,2,3\}$, $|\mathfrak{L}_{i,1}|< \frac{L_i}{10^{1000}}$. Then, it holds that $|\mathfrak{L}_{j,1}|< \frac{L_j}{10^{1000}}$ in particular for that $j \in \{1,2,3\}$ such that $L_j=M_j$; we now fix that $j \in \{1,2,3\}$.

Independently of the fact that $|\mathfrak{L}_{j,1}|< \frac{L_j}{10^{1000}}$, it holds that
\begin{displaymath} I^{(j)}_{\mathfrak{G}_1'\cap\mathfrak{G}_2'\cap\mathfrak{G}_3', \mathfrak{L}_j'} \geq |\mathfrak{G}_1'\cap\mathfrak{G}_2'\cap\mathfrak{G}_3'| \cdot \frac{10^{-8}}{1+10^{-8}}(1-10^{-8}-10^{-2})\lceil N_j \rceil\geq \end{displaymath}
\begin{displaymath} \geq \frac{1-8\cdot 10^{-2}}{1+10^{-8}}S \cdot \frac{10^{-8}}{1+10^{-8}}(1-10^{-8}-10^{-2})\lceil N_j\rceil \geq 10^{-10}S\lceil N_j\rceil,\end{displaymath}
since each point $x \in \mathfrak{G}_1'\cap\mathfrak{G}_2'\cap\mathfrak{G}_3'$ intersects at least $\frac{10^{-8}}{1+10^{-8}}(1-10^{-8}-10^{-2})\lceil N_j \rceil $ lines of $\mathfrak{L}_j(x) \cap \mathfrak{L}'_j$.

In addition, each line $l \in \mathfrak{L}_j' \setminus \mathfrak{L}_{j,1}$ belongs to $\mathfrak{L}_j(x)$ for fewer than $\frac{1}{10^{100}}\frac{S\lceil N_j\rceil}{L_j}$ points $x \in \mathfrak{G}_1'\cap\mathfrak{G}_2'\cap\mathfrak{G}_3'$. Thus,
\begin{displaymath} I^{(j)}_{\mathfrak{G}_1'\cap\mathfrak{G}_2'\cap\mathfrak{G}_3', \mathfrak{L}_j'\setminus \mathfrak{L}_{j,1}} <|\mathfrak{L}_j' \setminus \mathfrak{L}_{j,1}| \cdot \frac{S\lceil N_j\rceil }{10^{100}L_j} \leq L_j \cdot \frac{S\lceil N_j\rceil}{10^{100}L_j}=10^{-100}S\lceil N_j\rceil.\end{displaymath} Therefore, since $I^{(j)}_{\mathfrak{G}_1'\cap\mathfrak{G}_2'\cap\mathfrak{G}_3', \mathfrak{L}_j'}=I^{(j)}_{\mathfrak{G}_1'\cap\mathfrak{G}_2'\cap\mathfrak{G}_3', \mathfrak{L}_j'\setminus \mathfrak{L}_{j,1}}+I^{(j)}_{\mathfrak{G}_1'\cap\mathfrak{G}_2'\cap\mathfrak{G}_3',  \mathfrak{L}_{j,1}}$, we obtain
\begin{displaymath} I^{(j)}_{\mathfrak{G}_1'\cap\mathfrak{G}_2'\cap\mathfrak{G}_3',  \mathfrak{L}_{j,1}} >10^{-11}S\lceil N_j\rceil . \end{displaymath} 
Now, again for that $j \in \{1,2,3\}$ such that $L_j=M_j$, we define $\mathfrak{G}':=\{x \in \mathfrak{G}_1'\cap\mathfrak{G}_2'\cap\mathfrak{G}_3': |\mathfrak{L}_j(x) \cap \mathfrak{L}_{j,1}| \geq 10^{-12}\lceil N_j\rceil\}$. In other words, for that particular $j \in \{1,2,3\}$, $x \in \mathfrak{G}'$ if and only if $x \in \mathfrak{G}_1'\cap\mathfrak{G}_2'\cap\mathfrak{G}_3'$ and $\mathfrak{L}_j(x)$ contains at least $10^{-12}\lceil N_j\rceil$ lines of $\mathfrak{L}_{j,1}$. 

Since each point $x \in (\mathfrak{G}_1'\cap\mathfrak{G}_2'\cap\mathfrak{G}_3')\setminus \mathfrak{G}'$ intersects fewer than $10^{-12}\lceil N_j\rceil$ lines of $\mathfrak{L}_j(x) \cap \mathfrak{L}_{j,1}$, it holds that
\begin{displaymath} I^{(j)}_{(\mathfrak{G}_1'\cap\mathfrak{G}_2'\cap\mathfrak{G}_3')\setminus \mathfrak{G}',  \mathfrak{L}_{j,1}} <10^{-12}S\lceil N_j\rceil, \end{displaymath} and thus, as $I^{(j)}_{\mathfrak{G}_1'\cap\mathfrak{G}_2'\cap\mathfrak{G}_3',  \mathfrak{L}_{j,1}}=I^{(j)}_{(\mathfrak{G}_1'\cap\mathfrak{G}_2'\cap\mathfrak{G}_3')\setminus \mathfrak{G}',  \mathfrak{L}_{j,1}}+I^{(j)}_{\mathfrak{G}', \mathfrak{L}_{j,1}}$, we obtain
\begin{displaymath} I^{(j)}_{\mathfrak{G}', \mathfrak{L}_{j,1}}> (10^{-11}-10^{-12})S\lceil N_j\rceil = 9 \cdot 10^{-12}S\lceil N_j\rceil. \end{displaymath} At the same time, however, $|\mathfrak{L}_j(x)| =\lceil N_j\rceil$ for all $x \in \mathfrak{G}'$. Therefore, \begin{displaymath} I^{(j)}_{\mathfrak{G}', \mathfrak{L}_{j,1}} \leq |\mathfrak{G}'|\lceil N_j\rceil. \end{displaymath} Thus, the above imply that
\begin{displaymath} |\mathfrak{G}'| > 9\cdot 10^{-12}S. \end{displaymath}

But if $\{i,k\}=\{1,2,3\}\setminus \{j\}$, then each point $x \in \mathfrak{G}'$ is a multijoint for the collections $\mathfrak{L}_{j,1}$, $\mathfrak{L}_i'$ and $\mathfrak{L}_k'$ of lines, that lies in $\geq 10^{-12}\lceil N_j\rceil$ lines of $\mathfrak{L}_j(x) \cap \mathfrak{L}_{j,1}$, in $\geq  \frac{1-10^{-8}-10^{-2}}{1+10^{8}}\lceil N_i \rceil $ lines of $\mathfrak{L}_i(x)\cap \mathfrak{L}_i'$ and in $\geq  \frac{1-10^{-8}-10^{-2}}{1+10^{8}}\lceil N_k\rceil $ lines of $\mathfrak{L}_k(x) \cap \mathfrak{L}_k'$. Now, for all $x \in \mathfrak{G}'$, if $l_j \in \mathfrak{L}_j(x)$, $l_i \in \mathfrak{L}_i(x)$ and $l_k \in \mathfrak{L}_k(x)$, then the directions of the lines $l_i$, $l_j$ and $l_k$ span $\R^3$. Therefore, since $|\mathfrak{L}_{j,1}| < \frac{{L}_j}{10^{1000}}\lneq M_j$, our induction hypothesis implies that
\begin{displaymath} 9\cdot 10^{-12}S< |\mathfrak{G}'| \leq 
\end{displaymath}
\begin{displaymath}
\leq c\cdot \frac{(|\mathfrak{L}_{j,1}| \cdot |\mathfrak{L}_i'| \cdot |\mathfrak{L}_k'|)^{1/2}}{(10^{-12}\lceil N_j\rceil )^{1/2} \Big(\frac{1-10^{-8}-10^{-2}}{1+10^{8}}\lceil N_i\rceil \Big)^{1/2}\Big(\frac{1-10^{-8}-10^{-2}}{1+10^{8}}\lceil N_k\rceil \Big)^{1/2}},\end{displaymath} 
where $c$ is the explicit constant defined earlier, and which appears in the induction process.

Therefore, $$S\leq c\cdot \frac{1}{9\cdot 10^{-12} (10^{-12})^{1/2}\Big(\frac{1-10^{-8}-10^{-2}}{1+10^{8}}\Big)^{1/2}\Big(\frac{1-10^{-8}-10^{-2}}{1+10^{8}}\Big)^{1/2}}\cdot$$
$$\cdot
 \frac{(|\mathfrak{L}_{j,1}| \cdot |\mathfrak{L}_i'| \cdot |\mathfrak{L}_k'|)^{1/2}}{(\lceil N_j\rceil \lceil N_i\rceil \lceil N_k\rceil )^{1/2}}\leq$$
$$\leq c\cdot \frac{1}{9\cdot 10^{-12} (10^{-12})^{1/2}\Big(\frac{1-10^{-8}-10^{-2}}{1+10^{8}}\Big)^{1/2}\Big(\frac{1-10^{-8}-10^{-2}}{1+10^{8}}\Big)^{1/2}}\cdot $$
$$ \cdot
 \frac{(|\mathfrak{L}_{j,1}| \cdot |\mathfrak{L}_i'| \cdot |\mathfrak{L}_k'|)^{1/2}}{( N_j N_i N_k)^{1/2}}.
$$
However, 
$$\frac{1}{9\cdot 10^{-12} (10^{-12})^{1/2}\Big(\frac{1-10^{-8}-10^{-2}}{1+10^{8}}\Big)^{1/2}\Big(\frac{1-10^{-8}-10^{-2}}{1+10^{8}}\Big)^{1/2}}<
$$
$$<\frac{1}{10^{-18} \cdot \frac{1-10^{-8}-10^{-2}}{1+10^{8}}}<\frac{1}{10^{-18} \cdot \frac{1/2}{2 \cdot 10^{8}}}=4 \cdot \frac{10^8}{10^{-18}}<10^{27},
$$
so, since \begin{displaymath} |\mathfrak{L}_{j,1}|< \frac{L_j}{10^{1000}}, \text{ }|\mathfrak{L}_i'|\leq L_i\text{ and }|\mathfrak{L}_k'| \leq L_k, \end{displaymath} it follows that
\begin{displaymath} S \leq c \cdot \frac{(L_1L_2L_3)^{1/2}}{(N_1N_2N_3)^{1/2}}, \end{displaymath}
for the same constant $c$ as in the first two steps of the induction process.

Thus, Theorem \ref{multsimple} is proved.

\end{proof}

Theorem \ref{theoremmult2} is an immediate corollary of Theorem \ref{multsimple} for $(N_1,N_2,N_3)=(1,1,1)$.

We now briefly explain why Theorem \ref{theoremmult2} is actually equivalent to Theorem \ref{multsimple}, using a standard probabilistic argument (see, for example, Guth's notes \href{http://math.mit.edu/~lguth/PolyMethod/lect34.pdf}{``The multilinear Kakeya inequality"}, or, for more details, \cite[Chapter 7]{Iliopoulou_13}). Indeed, suppose that $$ |J(\mathfrak{L}_1,\mathfrak{L}_2,\mathfrak{L}_3)| \lesssim (|\mathfrak{L}_1| |\mathfrak{L}_2||\mathfrak{L}_3|)^{1/2},$$ for all finite collections $\mathfrak{L}_1$, $\mathfrak{L}_2$, $\mathfrak{L}_3$ of lines in $\R^3$. 
Now, fix finite collections $\mathfrak{L}_1$, $\mathfrak{L}_2$, $\mathfrak{L}_3$ of $L_1$, $L_2$, $L_3$, respectively, lines in $\R^n$, and $(N_1,N_2,N_3) \in {\N}_+^3$. For all $i=1,2,3$, we create a subcollection $\mathfrak{L}_i'$ of $\mathfrak{L}_i$, by choosing each line of $\mathfrak{L}_i$ with probability $1/N_i$. The probability that a point $x \in J_{N_1,N_2,N_3}(\mathfrak{L}_1,\mathfrak{L}_2,\mathfrak{L}_3)$ belongs to at least one line of $\mathfrak{L}_i(x) \cap \mathfrak{L}_i'$ is $\gtrsim 1$. Moreover, with probability $\gtrsim 1$, $|\mathfrak{L}_i'| \lesssim L_i/N_i$, for all $i=1,2,3$. It follows that there exist finite collections $\mathfrak{L}_1'$, $\mathfrak{L}_2'$, $\mathfrak{L}_3'$ of lines in $\R^3$, with cardinalities $\lesssim L_1/N_1$, $\lesssim L_2/N_2$, $\lesssim L_3/N_3$, respectively, such that $\gtrsim |J_{N_1,N_2,N_3}(\mathfrak{L}_1,\mathfrak{L}_2,\mathfrak{L}_3)|$ multijoints in $J_{N_1,N_2,N_3}(\mathfrak{L}_1,\mathfrak{L}_2,\mathfrak{L}_3)$ belong to $J(\mathfrak{L}_1',\mathfrak{L}_2',\mathfrak{L}_3')$. This implies that $|J_{N_1,N_2,N_3}(\mathfrak{L}_1,\mathfrak{L}_2,\mathfrak{L}_3)|\lesssim|J(\mathfrak{L}_1',\mathfrak{L}_2',\mathfrak{L}_3')|\lesssim \big(\frac{L_1}{N_1}\frac{L_2}{N_2} \frac{L_3}{N_3}\big)^{1/2}\sim \frac{(L_1L_2 L_3)^{1/2}}{(N_1N_2 N_3)^{1/2}}$.

It follows that Theorem \ref{theoremmult2} is equivalent to Theorem \ref{multsimple}.

\section{From lines to curves} \label{section5}

In Section \ref{section6}, we will consider multijoints formed by more general curves, and we will extend Theorem \ref{theoremmult2} and Theorem \ref{multsimple} to that setting. To that end, we need to extend certain computational results for sets of lines to sets of more general curves, by recalling and further analysing some facts from algebraic geometry.

If $\mathbb{K}$ is a field, then any set of the form \begin{displaymath} \{x \in \mathbb{K}^n: p_i(x)=0, \; \forall \; i=1,\ldots,k\}, \end{displaymath} where $k \in \N$ and $p_i \in \mathbb{K}[x_1,\ldots,x_n]$ for all $i =1,\ldots,k$, is called an \textit{algebraic set} or an \textit{affine variety} or simply a \textit{variety} in $\mathbb{K}^n$, and is denoted by $V(p_1,\ldots,p_k)$. A variety $V$ in $\mathbb{K}^n$ is \textit{irreducible} if it cannot be expressed as the union of two non-empty varieties in $\mathbb{K}^n$ which are strict subsets of $V$.

If $V$ is a variety in $\mathbb{K}^n$, the set \begin{displaymath} I(V):= \{ p \in \mathbb{K}[x_1,\ldots,x_n]: p(x)=0, \; \forall \; x \in V\} \end{displaymath}
is an ideal in $\mathbb{K}[x_1,\ldots,x_n]$. If, in particular, $V$ is irreducible, then $I(V)$ is a prime ideal of $\mathbb{K}[x_1,\ldots,x_n]$, and the transcendence degree of the ring $\mathbb{K}[x_1,\ldots,x_n]/I(V)$ over $\mathbb{K}$ is the \textit{dimension} of the irreducible variety $V$. The \textit{dimension of an algebraic set} is the maximal dimension of all the irreducible varieties contained in the set. If an algebraic set has dimension 1 it is called an \textit{algebraic curve}, while if it has dimension $n-1$ it is called an \textit{algebraic hypersurface}.

Now, if $\mathbb{K}$ is a field, an order $\prec$ on the set of monomials in $\mathbb{K}[x_1,...,x_n]$ is called a \textit{term order}, if it is a total order on the monomials of $\mathbb{K}[x_1,...,x_n]$, such that it is multiplicative (i.e. it is preserved by multiplication by the same monomial) and the constant monomial is the $\prec$-smallest monomial. Then, if $I$ is an ideal in $\mathbb{K}[x_1,...,x_n]$, we define $in_{\prec}(I)$ as the ideal of $\mathbb{K}[x_1,...,x_n]$ generated by the $\prec$-initial terms, i.e. the $\prec$-largest monomial terms, of all the polynomials in $I$. 

Let $V$ be a variety in $\mathbb{K}[x_1,...,x_n]$ and $\prec$ a term order on the set of monomials in $\mathbb{K}[x_1,...,x_n]$. Also, let $S$ be a maximal subset of the set of variables $\{x_1,...,x_n\}$, with the property that no monomial in the variables in $S$ belongs to $in_{\prec}(I(V))$. Then, it holds that the dimension of $V$ is the cardinality of $S$ (see \cite{Sturmfels_2005}).

Now, if $\gamma$ is an algebraic curve in $\C^n$, a generic hyperplane of $\C^n$ intersects the curve in a specific number of points (counted with appropriate multiplicities), which is called the \textit{degree} of the curve.

A consequence of B\'ezout's theorem (see, for example, \cite[Theorem 12.3]{MR732620} or \cite[Chapter 3, \S3]{MR2122859}) is the following.

\begin{theorem}\emph{\textbf{(B\'ezout)}}\label{4.1.2} Let $\gamma$ be an irreducible algebraic curve in $\C^n$ of degree $b$, and $p \in \C[x_1,\ldots,x_n]$. If $\gamma$ is not contained in the zero set of $p$, it intersects the zero set of $p$ at most $b \cdot \deg p$ times. \end{theorem}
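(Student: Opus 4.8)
The plan is to deduce the statement from the classical projective form of B\'ezout's theorem for a curve and a hypersurface, via passage to the projective closure. Writing $d := \deg p$, first note that if $d=0$ then either $p\equiv 0$, which forces $\gamma$ into the zero set of $p$ and is excluded by hypothesis, or $p$ is a non-zero constant and its zero set is empty, in which case there is nothing to prove; so we may assume $d\geq 1$. Let $\bar\gamma \subseteq \mathbb{P}^n$ be the Zariski closure of $\gamma$ under the standard open embedding $\C^n \hookrightarrow \mathbb{P}^n$. Since the closure of an irreducible set is irreducible, $\bar\gamma$ is an irreducible projective curve, and by the definition of degree via generic hyperplane sections it has the same degree $b$ as $\gamma$: a generic hyperplane of $\mathbb{P}^n$ avoids the finite set $\bar\gamma \cap H_\infty$, where $H_\infty$ is the hyperplane at infinity, so it meets $\bar\gamma$ in exactly the same number of points, counted with multiplicity, as the corresponding affine hyperplane meets $\gamma$. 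Let $P \in \C[x_0,x_1,\ldots,x_n]$ be the degree-$d$ homogenization of $p$, and $H := \{P=0\}\subseteq \mathbb{P}^n$, a hypersurface of degree $d$.

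Next I would verify that $\bar\gamma \not\subseteq H$. Indeed, if $\bar\gamma \subseteq H$, then every point of $\gamma \subseteq \bar\gamma$ lies in $H$; since the affine points of $H$ are precisely the points of the zero set of $p$, this would place $\gamma$ inside the zero set of $p$, contradicting the hypothesis. Therefore $\bar\gamma \cap H$ is a proper Zariski-closed subset of the irreducible curve $\bar\gamma$, hence a finite set of points. Now I would invoke the consequence of B\'ezout's theorem cited just above the statement (e.g. \cite[Theorem 12.3]{MR732620} or \cite[Chapter 3, \S3]{MR2122859}), specialized to an irreducible projective curve of degree $b$ and a hypersurface of degree $d$ not containing it: the sum of the local intersection multiplicities over the points of $\bar\gamma \cap H$ equals $b\,d$. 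Since each local intersection multiplicity is a positive integer, the number of points of $\bar\gamma \cap H$ is at most $b\,d$. Finally, every point of the intersection of $\gamma$ with the zero set of $p$ is a distinct point of $\bar\gamma \cap H$, so that intersection has at most $|\bar\gamma \cap H| \leq b\cdot \deg p$ points, as claimed (and the same bound holds a fortiori if intersections are counted with multiplicity, since in that case the total is exactly $bd$ minus the contributions at infinity).

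The only point requiring genuine care, rather than routine bookkeeping, is the compatibility of the two notions of degree in play: the generic-hyperplane-section degree used in the setup of the paper must be seen to agree with the degree appearing in the projective B\'ezout theorem, and to be unchanged under passage to the projective closure. This is standard, but it is worth stating explicitly, and it is essentially the whole content of the argument beyond the reduction to $d\geq 1$ and the harmless step of passing from intersection multiplicities to a plain count of points, both of which only lose information in the direction of the desired inequality.
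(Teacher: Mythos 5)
Your proposal is correct: the paper gives no proof of Theorem \ref{4.1.2}, stating it as a known consequence of B\'ezout's theorem with references to Fulton and to Cox--Little--O'Shea, and your deduction --- passing to the projective closure $\bar\gamma\subseteq\mathbb{P}^n$, homogenizing $p$, checking $\bar\gamma\not\subseteq H$, and applying the projective curve--hypersurface form of B\'ezout before dropping multiplicities --- is exactly the standard argument those citations encapsulate. The one point you rightly flag, the agreement of the generic-hyperplane-section degree of $\gamma$ with the projective degree of $\bar\gamma$, is handled adequately by your observation that a generic hyperplane avoids the finitely many points of $\bar\gamma$ on the hyperplane at infinity.
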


By B\'ezout's theorem, we can deduce the following (see \cite{Iliopoulou_12} for details).

\begin{corollary}\label{4.1.4} Let $\gamma _1$, $\gamma_2$ be two distinct irreducible complex algebraic curves in $\C^n$. Then, they have at most $\deg \gamma_1\cdot \deg \gamma_2$ common points. \end{corollary}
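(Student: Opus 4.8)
The plan is to reduce the statement to B\'ezout's theorem (Theorem \ref{4.1.2}) by exhibiting a single hypersurface of degree at most $\deg\gamma_2$ that contains $\gamma_2$ but not $\gamma_1$. Write $b_i:=\deg\gamma_i$. Since $\gamma_1$ and $\gamma_2$ are distinct and irreducible, neither is contained in the other (a proper Zariski-closed subset of an irreducible curve is finite), so $\gamma_1\cap\gamma_2$ is a finite set and $\gamma_1\not\subseteq\gamma_2$.

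Next I would pass to a linear projection $\pi\colon\C^n\to\C^2$ chosen generically. The facts I would invoke are classical: (a) for generic $\pi$ the restriction $\pi|_{\gamma_i}$ is birational onto its image and the Zariski closure $C_i:=\overline{\pi(\gamma_i)}$ is an irreducible plane curve of degree exactly $b_i$ (degree is preserved under a generic linear projection); and (b) for generic $\pi$ one has $C_1\ne C_2$. For (b), fix a point $a\in\gamma_1\setminus\gamma_2$: the projections that identify $a$ with some point of $\gamma_2$ form a subvariety of the parameter space of dimension one less than the whole space (the conditions $\pi(a)=\pi(x)$ cut out codimension $2$ for each fixed $x$, and $x$ runs over a curve), so a generic $\pi$ maps $a$ off $C_2$ while $\pi(a)\in C_1$, forcing $C_1\ne C_2$. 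To sidestep the subtlety of taking the Zariski closure of a constructible image, one may run this argument with projective closures in $\mathbb{P}^n$ and $\mathbb{P}^2$, where images of projective curves are already closed.

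With such a $\pi$ fixed, let $q\in\C[y_1,y_2]$ be a defining polynomial of the plane curve $C_2=Z(q)$, so $\deg q=b_2$, and set $p:=q\circ\pi\in\C[x_1,\dots,x_n]$, which has degree at most $b_2$ (the composition of a degree-$b_2$ polynomial with an affine map). Then $\gamma_2\subseteq\pi^{-1}(C_2)=Z(p)$, whereas $\gamma_1\not\subseteq Z(p)$: otherwise $\pi(\gamma_1)\subseteq Z(q)=C_2$, hence $C_1\subseteq C_2$, and since both are irreducible curves this would give $C_1=C_2$, contradicting (b). B\'ezout's theorem (Theorem \ref{4.1.2}) now applies to the irreducible curve $\gamma_1$ of degree $b_1$ and the polynomial $p$, since $\gamma_1$ is not contained in $Z(p)$: it yields $|\gamma_1\cap Z(p)|\le b_1\cdot\deg p\le b_1b_2$. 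As $\gamma_1\cap\gamma_2\subseteq\gamma_1\cap Z(p)$, we conclude $|\gamma_1\cap\gamma_2|\le\deg\gamma_1\cdot\deg\gamma_2$.

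The only non-formal ingredient, and hence the main obstacle, is the pair of generic-position assertions (a) and (b) about linear projections — that a generic projection of an irreducible space curve is birational onto an irreducible plane curve of the same degree, and that distinct irreducible space curves have distinct generic plane projections. These follow from standard elimination theory and the geometry of projections, but they must be set up carefully, in particular the passage from the image of a curve to its Zariski closure (most cleanly handled by projectivising); see \cite{Iliopoulou_12} for the details.
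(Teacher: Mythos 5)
Your argument is correct, and it is worth saying up front that the paper itself gives no in-text proof of Corollary \ref{4.1.4}: it records the statement as a consequence of B\'ezout's theorem and defers the details to \cite{Iliopoulou_12}. Your route makes that deduction explicit by reducing to Theorem \ref{4.1.2}: you manufacture, via a generic linear projection $\pi\colon\C^n\to\C^2$, a single polynomial $p=q\circ\pi$ of degree at most $\deg\gamma_2$ vanishing on $\gamma_2$ but not identically on $\gamma_1$, and then apply the curve--hypersurface bound to $\gamma_1$ and $Z(p)$. This is a legitimate and rather hands-on way to fill in the citation, and your treatment of the two real subtleties is the right one: the point $\pi(a)$ must avoid the Zariski closure of $\pi(\gamma_2)$ rather than just its image, which you correctly resolve by passing to projective closures where the image is already closed, and the codimension-two count over the curve $\gamma_2$ (or its projective closure) does show the bad projections form a proper subvariety of the parameter space. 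One small remark: the full strength of your generic-position fact (a) is not needed. Birationality of $\pi|_{\gamma_i}$ and exact preservation of degree require the classical (and slightly deeper) statement that a generic projection of a space curve is generically injective; your proof only uses the inequality $\deg C_2\le\deg\gamma_2$, which follows directly from intersecting $C_2$ with a generic line and pulling back to a hyperplane through the centre of projection, together with $C_1\ne C_2$, which is exactly what your dimension count supplies. Trimming (a) to these two facts would make the argument self-contained apart from entirely standard elimination-theoretic facts, and in that form it is an acceptable, somewhat more elementary substitute for the proof the paper outsources to \cite{Iliopoulou_12}.
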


Using Corollary \ref{4.1.4}, we have proved the following in \cite{Iliopoulou_12}.

\begin{lemma}\label{4.1.1} An irreducible real algebraic curve $\gamma$ in $\R^n$ is contained in a unique irreducible complex algebraic curve in $\C^n$.
\end{lemma}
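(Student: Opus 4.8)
The plan is to take the desired complex curve to be the Zariski closure $W$ of $\gamma$ inside $\C^n$, and to check that $W$ is irreducible and one-dimensional; uniqueness will then be immediate from Corollary \ref{4.1.4}. So, first I would set $\mathfrak{p}:=I(\gamma)\subseteq\R[x_1,\ldots,x_n]$, which is a prime ideal (as $\gamma$ is irreducible) with $\R[x_1,\ldots,x_n]/\mathfrak{p}$ of Krull dimension $1$ (as $\gamma$ has dimension $1$), and let $W\subseteq\C^n$ be the smallest complex algebraic set containing $\gamma$ under the inclusion $\R^n\hookrightarrow\C^n$; equivalently $W=V(I_\C(\gamma))$, where $I_\C(\gamma)$ is the ideal of all complex polynomials vanishing on $\gamma$.

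The key elementary fact is that the real locus $Y\cap\R^n$ of any complex algebraic set $Y\subseteq\C^n$ is itself a real algebraic set: write each defining polynomial of $Y$ as $u+iv$ with $u,v\in\R[x_1,\ldots,x_n]$, and observe that on $\R^n$ the equation $u+iv=0$ amounts to $u=0$ and $v=0$. Granting this, irreducibility of $W$ follows: if $W=W_1\cup W_2$ with $W_1,W_2$ proper complex algebraic subsets, then $\gamma=(\gamma\cap W_1)\cup(\gamma\cap W_2)$ exhibits $\gamma$ as a union of two real algebraic subsets of itself, so by irreducibility of $\gamma$ we get, say, $\gamma\subseteq W_1$, and then $W\subseteq W_1$ by minimality of $W$ — a contradiction. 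For the dimension, I would squeeze from both sides: since $\mathfrak{p}\C[x]\subseteq I_\C(\gamma)$ we have $W\subseteq V(\mathfrak{p}\C[x])$, and $\C[x]/\mathfrak{p}\C[x]\cong(\R[x]/\mathfrak{p})[i]$ is a free rank-$2$, hence integral, extension of $\R[x]/\mathfrak{p}$, so it has Krull dimension $1$; thus $\dim_\C W\le 1$. Conversely $\dim_\C W\ge 1$, for otherwise $W$ is a finite set and then so is $\gamma\subseteq W$, contradicting the fact that a positive-dimensional real algebraic set is infinite (a finite set $\gamma$ would make $\R[x]/I(\gamma)$ a finite-dimensional $\R$-algebra, forcing $\dim\gamma=0$). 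Hence $W$ is an irreducible complex algebraic curve containing $\gamma$.

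Uniqueness is then easy: if $\gamma$ is also contained in an irreducible complex algebraic curve $\gamma'$, then $\gamma\subseteq W\cap\gamma'$; were $\gamma'\ne W$, Corollary \ref{4.1.4} would make $W\cap\gamma'$ finite, contradicting that $\gamma$ is infinite. So $\gamma'=W$. The step I expect to need the most care is the dimension count — making sure the complexification neither collapses to points nor gains a higher-dimensional component — and this is precisely where the hypothesis $\dim\gamma=1$ is used, via the integral-extension bound from above and the infiniteness of $\gamma$ from below; note also that one should not assume $I_\C(\gamma)=\mathfrak{p}\C[x]$, which is why the argument is phrased as a two-sided estimate rather than a direct identification.
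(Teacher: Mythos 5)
Your proposal is correct, and I can't fault any step: the Zariski closure $W$ of $\gamma$ in $\C^n$ is irreducible (because the real trace of a complex algebraic set is real algebraic, so a decomposition of $W$ would decompose the irreducible set $\gamma$, contradicting minimality of $W$), its dimension is pinned between $1$ and $1$ by the integral extension $\R[x_1,\ldots,x_n]/I(\gamma)\hookrightarrow\C[x_1,\ldots,x_n]/I(\gamma)\C[x_1,\ldots,x_n]$ on one side and the infinitude of $\gamma$ on the other, and uniqueness follows from Corollary \ref{4.1.4} together with that infinitude. Note that the paper does not reproduce a proof of this lemma at all — it only records that it was proved in \cite{Iliopoulou_12} ``using Corollary \ref{4.1.4}'' — so a direct line-by-line comparison is not possible; your uniqueness step is exactly the mechanism the paper attributes to the cited proof (two distinct irreducible complex curves meet in finitely many points, while $\gamma$ is infinite), and your existence part supplies the complexification argument that the paper leaves to the reference. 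Your commutative-algebra dimension count (Krull dimension preserved under the rank-two integral extension, and insensitivity to the possible failure of $I_\C(\gamma)=I(\gamma)\C[x_1,\ldots,x_n]$, which you rightly flag) is a clean alternative to the route the paper itself uses elsewhere for the analogous statement, namely the initial-ideal/term-order criterion in the proof of Lemma \ref{maybelast}; either device would do here, and you implicitly use the standard identification of Krull dimension with transcendence degree for finitely generated domains, which is consistent with the paper's definition of dimension.
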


Note that, by the above, the smallest complex algebraic curve containing a real algebraic curve is the union of the irreducible complex algebraic curves, each of which contains an irreducible component of the real algebraic curve.

In particular, we have shown the following in \cite{Iliopoulou_13}.

\begin{lemma} \label{newstuff} Any real algebraic curve in $\R^n$ is the intersection of $\R^n$ with the smallest complex algebraic curve containing it. 
\end{lemma}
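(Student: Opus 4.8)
The plan is to reduce immediately to the irreducible case and then apply Lemma \ref{4.1.1}. Let $\gamma$ be a real algebraic curve in $\R^n$, and write $\gamma = \gamma^{(1)} \cup \cdots \cup \gamma^{(m)}$ as a union of its irreducible components (as a real variety). Since the smallest complex algebraic curve containing $\gamma$ is, by the remark following Lemma \ref{4.1.1}, the union $\Gamma^{(1)} \cup \cdots \cup \Gamma^{(m)}$, where $\Gamma^{(j)}$ is the unique irreducible complex algebraic curve containing $\gamma^{(j)}$, and since $(\Gamma^{(1)} \cup \cdots \cup \Gamma^{(m)}) \cap \R^n = \bigcup_j (\Gamma^{(j)} \cap \R^n)$, it suffices to prove the statement for a single irreducible real algebraic curve $\gamma$ with its unique irreducible complex overcurve $\Gamma$: that is, I would show $\Gamma \cap \R^n = \gamma$. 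The inclusion $\gamma \subseteq \Gamma \cap \R^n$ is immediate from $\gamma \subseteq \Gamma$ and $\gamma \subseteq \R^n$, so the content is the reverse inclusion $\Gamma \cap \R^n \subseteq \gamma$.

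To establish $\Gamma \cap \R^n \subseteq \gamma$, I would argue as follows. First, $\Gamma \cap \R^n$ is itself a real algebraic set: if $\Gamma = V(q_1,\ldots,q_k)$ with $q_i \in \C[x_1,\ldots,x_n]$, then writing $q_i = a_i + \sqrt{-1}\, b_i$ with $a_i, b_i \in \R[x_1,\ldots,x_n]$, a point of $\R^n$ lies on $\Gamma$ iff it lies in the real variety $V(\{a_i, b_i\}_i)$. Next, since $\Gamma$ is an irreducible complex curve, it has complex dimension $1$, and hence $\Gamma \cap \R^n$ has real dimension at most $1$; so $\Gamma \cap \R^n$ is a real algebraic set of dimension $\le 1$ (possibly reducible or with isolated points). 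The key point is that the Zariski closure of $\gamma$ over $\C$ is exactly $\Gamma$ (this is built into Lemma \ref{4.1.1}: $\Gamma$ is the \emph{unique} irreducible complex curve containing $\gamma$, so in particular it is the smallest complex variety containing $\gamma$, i.e. $\overline{\gamma}^{\,\C} = \Gamma$). Therefore any point $x \in \Gamma \cap \R^n$ lies in $\overline{\gamma}^{\,\C}$. Now I would invoke the real Nullstellensatz-type fact — or, to stay within the toolkit the paper has set up, Corollary \ref{4.1.4} together with a dimension count — to conclude that a $1$-dimensional irreducible real curve is Zariski-dense (over $\R$) in its complexification, so that the real points of $\Gamma$ cannot exceed $\gamma$: if there were a real point $x \in (\Gamma \cap \R^n) \setminus \gamma$, then since $\gamma$ is closed (Euclidean as well as Zariski), there is a real polynomial $f$ vanishing on $\gamma$ with $f(x) \ne 0$; but $f$, regarded as a complex polynomial, vanishes on the complex-Zariski-dense subset $\gamma$ of the irreducible curve $\Gamma$, hence vanishes on all of $\Gamma$, contradicting $f(x) \ne 0$.

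The step I expect to be the main obstacle is making the density claim "$\gamma$ is Zariski-dense in $\Gamma$ over $\R$" airtight — more precisely, ensuring that a polynomial in $\R[x_1,\ldots,x_n]$ vanishing on the real curve $\gamma$ must vanish on the complex curve $\Gamma$. This requires knowing that $\gamma$, although a priori possibly a proper subset of the real points of $\Gamma$, is not contained in any proper complex subvariety of $\Gamma$; equivalently, that $I(\gamma)$ over $\C$ is the (prime) ideal $I(\Gamma)$. This is exactly where Lemma \ref{4.1.1} does its work — the uniqueness of $\Gamma$ forces $\Gamma$ to be the complex Zariski closure of $\gamma$, since that closure is an irreducible complex curve containing $\gamma$. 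So the argument is really a matter of unwinding the statement of Lemma \ref{4.1.1} correctly: $\Gamma = \overline{\gamma}^{\,\C}$, hence $I_\C(\gamma) = I_\C(\Gamma)$, hence for real $f$, $f|_\gamma \equiv 0 \Leftrightarrow f \in I_\R(\Gamma) = I_\C(\Gamma) \cap \R[x_1,\ldots,x_n] \Rightarrow f|_{\Gamma} \equiv 0 \Rightarrow f|_{\Gamma \cap \R^n} \equiv 0$, giving $\Gamma \cap \R^n \subseteq V(I_\R(\gamma)) = \gamma$, where the last equality holds because $\gamma$ is a real variety. Reassembling the components then completes the proof.
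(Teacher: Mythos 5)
Your proof is correct and rests on the same key mechanism as the paper's: a real point off $\gamma$ is separated from $\gamma$ by a real polynomial vanishing on $\gamma$, and any such polynomial must vanish on the entire minimal complex curve, since $\gamma$ is Zariski-dense in it (equivalently, since otherwise its complex zero set would cut out a strictly smaller complex curve containing $\gamma$, contradicting minimality). The only difference is cosmetic: the paper runs this argument directly on the possibly reducible curve $\gamma$ and its minimal complex curve $\gamma_{\C}$, without the reduction to irreducible components or the appeal to Lemma \ref{4.1.1}.
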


\begin{proof} Let $\gamma$ be a real algebraic curve in $\R^n$ and $\gamma _{\C}$ the smallest complex algebraic curve containing it. We will show that $\gamma = \R^n \cap \gamma _{\C}$. 

Let $x \in \R^n$, such that $x \notin \gamma$; then, $x \notin \gamma_{\C}$. Indeed, $\gamma$ is the intersection of the zero sets, in $\R^n$, of some polynomials $p_1, \ldots, p_k \in \R[x_1,\ldots,x_n]$. Since $x \notin \gamma$, it follows that $x$ does not belong to the zero set of $p_i$ in $\R^n$, for some $i \in \{1,\ldots,k\}$. However, $x \in \R^n$, so it does not belong to the zero set of $p_i$ in $\C^n$, either. 

Now, the zero set of $p_i$ in $\C^n$ is a complex algebraic set containing $\gamma$, and therefore its intersection with $\gamma_{\C}$ is a complex algebraic set containing $\gamma$; in fact, it is a complex algebraic curve, since it contains the infinite set $\gamma$ and lies inside the complex algebraic curve $\gamma _{\C}$. Therefore, the intersection of the zero set of $p_i$ in $\C^n$ and $\gamma_{\C}$ is equal to $\gamma _{\C}$, as otherwise it would be a complex algebraic curve, smaller that $\gamma_{\C}$, containing $\gamma$. This means that $\gamma_{\C}$ is contained in the zero set of $p_i$ in $\C^n$, and since $x$ does not belong to the zero set of $p_i$ in $\C^n$, it does not belong to $\gamma _{\C}$ either.

Therefore, $\gamma = \R^n \cap \gamma _{\C}$.

\end{proof}

Now, even though a generic hyperplane of $\C^n$ intersects a complex algebraic curve in $\C^n$ in a fixed number of points, this is not true in general for real algebraic curves. However, by Lemma \ref{4.1.1}, we can define the \textit{degree of an irreducible real algebraic curve} in $\R^n$ as the degree of the (unique) irreducible complex algebraic curve in $\C^n$ containing it. Furthermore, we can define the \textit{degree of a real algebraic curve} in $\R^n$ as the degree of the smallest complex algebraic curve in $\C^n$ containing it. With this definition, and due to Lemma \ref{newstuff}, the degree of a real algebraic curve in $\R^n$ is equal to the sum of the degrees of its irreducible components (Lemma \ref{newstuff} ensures that distinct irreducible components of a real algebraic curve in $\R^n$ are contained in distinct irreducible complex algebraic curves in $\C^n$).

Therefore, if, by saying that a real algebraic curve $\gamma$ in $\R^n$ \textit{crosses itself at the point }$x_0 \in \gamma$, we mean that any neighbourhood of $x_0$ in $\gamma$ is homeomorphic to at least two intersecting lines, it follows that a real algebraic curve in $\R^n$ crosses itself at a point at most as many times as its degree.

Moreover, an immediate consequence of the discussion above is the following.

\begin{corollary}\label{4.1.3} Let $\gamma$ be an irreducible real algebraic curve in $\R^n$ of degree $b$, and $p \in \R[x_1,\ldots,x_n]$. If $\gamma$ is not contained in the zero set of $p$, it intersects the zero set of $p$ at most $b \cdot \deg p$ times. \end{corollary}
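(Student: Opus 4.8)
The plan is to reduce the statement to B\'ezout's theorem for complex curves (Theorem \ref{4.1.2}), via the dictionary between a real algebraic curve and the smallest complex algebraic curve containing it, established in Lemmas \ref{4.1.1} and \ref{newstuff}. First I would let $\gamma_{\C}$ be the irreducible complex algebraic curve in $\C^n$ containing $\gamma$, which exists and is unique by Lemma \ref{4.1.1}; by the definition of the degree of an irreducible real algebraic curve, $\deg \gamma_{\C} = b$. By Lemma \ref{newstuff}, $\gamma = \R^n \cap \gamma_{\C}$.

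Next I would check that $\gamma_{\C}$ is not contained in the zero set of $p$ in $\C^n$. Indeed, if it were, then, since $p$ has real coefficients, the set $\gamma = \R^n \cap \gamma_{\C}$ would be contained in $\R^n \cap \{x \in \C^n : p(x) = 0\}$, which is precisely the zero set of $p$ in $\R^n$; this contradicts the hypothesis that $\gamma$ is not contained in the zero set of $p$. Therefore the irreducible complex curve $\gamma_{\C}$, of degree $b$, is not contained in the zero set of $p$, so Theorem \ref{4.1.2} applies and yields that $\gamma_{\C}$ meets the zero set of $p$ in $\C^n$ in at most $b \cdot \deg p$ points.

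Finally, I would observe that every point of $\R^n$ at which $\gamma$ meets the zero set of $p$ in $\R^n$ is, in particular, a point of $\gamma_{\C}$ at which $\gamma_{\C}$ meets the zero set of $p$ in $\C^n$ (since $\gamma \subseteq \gamma_{\C}$ and a real zero of $p$ is a complex zero of $p$); hence the number of such points is at most $b \cdot \deg p$, as claimed. There is no real obstacle here: the only point requiring a modicum of care is the passage between the real and complex pictures, namely the use of the fact that $p$ has real coefficients together with Lemma \ref{newstuff} to identify $\R^n \cap \{p = 0 \text{ in } \C^n\}$ with the real zero set of $p$, and the fact that the degree of $\gamma$ was \emph{defined} to match $\deg \gamma_{\C}$, so that no loss is incurred in the bound.
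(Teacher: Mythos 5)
Your proof is correct and follows exactly the route the paper intends: the paper states Corollary \ref{4.1.3} as an ``immediate consequence of the discussion above,'' i.e.\ of Lemma \ref{4.1.1}, Lemma \ref{newstuff}, the definition of the degree of an irreducible real algebraic curve, and the complex B\'ezout bound of Theorem \ref{4.1.2}, which is precisely the chain of reductions you carry out. The only remark is that for the containment step the inclusion $\gamma \subseteq \gamma_{\C}$ already suffices, so the full strength of Lemma \ref{newstuff} is not strictly needed there.
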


Now, in analogy to the notion of critical lines, we say that a curve contained in the zero set $Z$ of a polynomial in $\R[x,y,z]$ is a \textit{critical curve} if all the points of the curve are critical points of $Z$. We have shown in \cite{Iliopoulou_12} that the following holds.

\begin{lemma}\label{4.1.14} The zero set of a polynomial $p \in \R[x,y,z]$ contains at most $(\deg p)^2$ critical irreducible real algebraic curves of $\R^3$. \end{lemma}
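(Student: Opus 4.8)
The plan is to transcribe the proof of Proposition \ref{2.2.3}, replacing the B\'ezout input for lines (Theorem \ref{2.2.1}) by an analogous B\'ezout-type bound for curves; this is precisely where the complexification results of this section (Lemma \ref{4.1.1}, Lemma \ref{newstuff}, Corollary \ref{4.1.3}) come into play. First I would replace $p$ by its square-free part $p_{sf}$, which has the same zero set $Z$ and the same critical points (hence the same critical curves) and satisfies $\deg p_{sf}\le \deg p=:d$. Write $p_{sf}=p_1\cdots p_k$ with each $p_i$ irreducible in $\R[x,y,z]$. Since $p_{sf}$ is square-free, $p_{sf}$ and $\nabla p_{sf}$ share no common factor: for each $i$ there is $g_i\in\{\partial_x p_{sf},\partial_y p_{sf},\partial_z p_{sf}\}$ with $p_i\nmid g_i$ (if $p_i$ divided all three partials, the product rule would force $p_i\mid\partial_\bullet p_i$, hence $\partial_\bullet p_i=0$ and $p_i$ constant); note $g_i\neq 0$, $\deg g_i\le d$, and, $p_i$ being irreducible, $p_i$ and $g_i$ are coprime in $\R[x,y,z]$.

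Next I would localise an arbitrary critical irreducible real algebraic curve $\gamma$ of $Z$. Since $Z=V(p_{sf})=\bigcup_{i=1}^k V(p_i)$ as zero sets in $\R^3$ and $\gamma$ is irreducible, $\gamma$ is contained in $V(p_j)$ for some $j$; and since $\gamma$ is critical, $\nabla p_{sf}$ vanishes on $\gamma$, so $\gamma\subseteq V(g_j)$ too. Thus every critical irreducible real algebraic curve of $Z$ lies in $V(p_j)\cap V(g_j)$ for some $j$, i.e.\ in the common zero set of two coprime polynomials of degrees $\le\deg p_j$ and $\le d$ respectively.

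The heart of the argument is then the following claim, the curve analogue of Theorem \ref{2.2.1}: \emph{if $f,g\in\R[x,y,z]$ are coprime, then $V(f)\cap V(g)$ contains at most $\deg f\cdot\deg g$ distinct irreducible real algebraic curves.} To prove it, given such a curve $\gamma$ let $\gamma_\C$ be the unique irreducible complex algebraic curve containing it (Lemma \ref{4.1.1}). Arguing as in Lemma \ref{newstuff}: $V_\C(f)\cap\gamma_\C$ is a complex algebraic subset of the irreducible curve $\gamma_\C$ containing the infinite set $\gamma$, so it equals $\gamma_\C$; hence $\gamma_\C\subseteq V_\C(f)$, and likewise $\gamma_\C\subseteq V_\C(g)$. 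Distinct $\gamma$'s give distinct $\gamma_\C$'s, since by Lemma \ref{newstuff} $\gamma=\R^3\cap\gamma_\C$. Moreover $f,g$ remain coprime over $\C$ (the gcd is unchanged under field extension; equivalently, a common complex factor together with its conjugate would produce a common real factor), so $V_\C(f)\cap V_\C(g)$ has no $2$-dimensional component and is therefore a complex algebraic set of dimension $\le 1$ with finitely many irreducible curve components, and each $\gamma_\C$ is one of them. Choosing a generic affine plane $H\cong\C^2$ in $\C^3$ — one meeting each of these finitely many curves, avoiding the finite $0$-dimensional part of $V_\C(f)\cap V_\C(g)$ and the finitely many pairwise intersections of the curves — the restrictions $f|_H,g|_H$ are nonzero and coprime (a common factor would cut out a curve inside a finite set), of degrees $\le\deg f,\deg g$; B\'ezout's theorem in the plane gives $|H\cap V_\C(f)\cap V_\C(g)|\le\deg f\cdot\deg g$, and since each curve contributes a distinct point to this set, their number is $\le\deg f\cdot\deg g$. (Alternatively, one may invoke the refined B\'ezout statement that the degree of the proper intersection $V_\C(f)\cap V_\C(g)$ is $\le\deg f\cdot\deg g$, which bounds the number of its components directly.)

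Finally, applying the claim with $f=p_j$, $g=g_j$ and summing over $j$, the number of critical irreducible real algebraic curves of $Z$ is at most $\sum_{j=1}^k\deg p_j\cdot\deg g_j\le d\sum_{j=1}^k\deg p_j=d\cdot\deg p_{sf}\le d^2=(\deg p)^2$, as required. The main obstacle is the claim in the previous paragraph: all the rest is a direct copy of the line case, but establishing the curve version of B\'ezout requires carefully moving between the real and complex pictures — checking that irreducibility, coprimality and the curve structure survive complexification — and then applying B\'ezout in a way that genuinely controls the \emph{number} (not just the degree) of components. Once that is in hand the bookkeeping is routine.
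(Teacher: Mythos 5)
Your proof is correct and follows exactly the blueprint the paper signals: transcribe the line case (Proposition \ref{2.2.3}), replacing the Bézout-for-lines input (Theorem \ref{2.2.1}) by a Bézout-type bound on the number of irreducible real algebraic curves in $V(f)\cap V(g)$ for coprime $f,g\in\R[x,y,z]$, which you establish via the complexification lemmas of Section \ref{section5} (Lemma \ref{4.1.1} and Lemma \ref{newstuff}) together with a generic planar slice and planar Bézout. The paper itself states Lemma \ref{4.1.14} with a citation to \cite{Iliopoulou_12} rather than giving a proof, but your reconstruction — including the coprimality-preservation under $\R\to\C$, the finiteness of the one-dimensional components, and the final bookkeeping $\sum_j\deg p_j\cdot\deg g_j\le (\deg p)^2$ — is sound and consistent with the framework developed here.
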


On a different subject, it is known (see \cite{BCR87} or \cite[Chapter 5]{MR2248869}) that each real semi-algebraic set (i.e., any set of the form $\{x\in \R^n: P(x)=0$ and $Q(x)>0, \; \forall \; Q\in \mathcal{Q} \}$, where $P\in \R[x_1,...,x_n]$ and $\mathcal{Q}$ is a finite family of polynomials in $\R[x_1,...,x_n]$) is the finite, disjoint union of path-connected components. We observe the following.

\begin{lemma}\label{4.1.15} A real algebraic curve in $\R^n$ is the finite, disjoint union of $\lesssim_{b,n} 1$ path-connected components. \end{lemma}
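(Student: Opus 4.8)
The plan is to reduce to the irreducible case and then invoke a uniform bound on the number of connected components of a real algebraic set in terms of the degrees and number of defining polynomials. First I would write the given real algebraic curve $\gamma$ in $\R^n$ as a finite union of its irreducible components $\gamma = \gamma_1 \cup \cdots \cup \gamma_m$. By the discussion preceding Corollary \ref{4.1.3}, the degree of $\gamma$ is the sum of the degrees of the $\gamma_i$, so in particular $m \lesssim_b 1$; hence it suffices to bound the number of path-connected components of each irreducible real algebraic curve of degree $\lesssim_b 1$ in $\R^n$ by a constant depending only on $b$ and $n$, and then sum.

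For a single irreducible real algebraic curve, I would argue as follows. By Lemma \ref{4.1.1}, $\gamma_i$ is contained in a unique irreducible complex algebraic curve $\Gamma_i$ in $\C^n$ of the same degree $b_i \le b$, and by Lemma \ref{newstuff}, $\gamma_i = \R^n \cap \Gamma_i$. An irreducible complex algebraic curve of degree $b_i$ in $\C^n$ can be cut out (set-theoretically) by polynomials of degree $\lesssim_{b_i,n} 1$ — indeed, it is a component of an intersection of hypersurfaces whose degrees are controlled by $b_i$ — and therefore $\gamma_i$, being the real points of this complex curve, is itself a real algebraic set defined by finitely many real polynomials (real and imaginary parts of the defining polynomials of $\Gamma_i$) of degree $\lesssim_{b_i,n} 1$, in number $\lesssim_{b_i, n} 1$. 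Since $\gamma_i$ is a real algebraic set (a special case of a real semi-algebraic set, with $\mathcal{Q} = \emptyset$), the classical quantitative bound of Oleĭnik--Petrovskiĭ / Milnor / Thom (see \cite{BCR87} or \cite[Chapter 5]{MR2248869}) on the number of connected components of a real semi-algebraic set, in terms of the ambient dimension, the number of defining polynomials, and their degrees, yields that $\gamma_i$ has $\lesssim_{b_i,n} 1 \lesssim_{b,n} 1$ path-connected components. Summing over $i = 1, \ldots, m$ and using $m \lesssim_b 1$ gives the claim.

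The main obstacle is making precise, with an \emph{explicit and uniform} degree bound depending only on $b$ and $n$, the passage from the abstract complex curve $\Gamma_i$ of degree $b_i$ to an honest finite system of real polynomial equations of controlled degree cutting out $\gamma_i$. One clean way around this, avoiding effective elimination theory, is to bypass the complex curve entirely: $\gamma$ is by definition $V(p_1,\ldots,p_k)$ for some $p_j \in \R[x_1,\ldots,x_n]$, but one must first pass to a subsystem of $\lesssim_{b,n} 1$ polynomials of degree $\lesssim_{b,n} 1$ having the same zero set — which again is an effective Nullstellensatz / bounded-degree generation statement for the radical ideal of a curve of degree $b$. Either route works; the routine-but-technical heart is precisely this boundedness of the defining data, after which the Oleĭnik--Petrovskiĭ--Milnor--Thom estimate finishes the proof immediately.
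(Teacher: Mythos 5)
Your route is correct in outline but genuinely different from the paper's. The paper's proof is a one-line appeal to the algorithm in \cite[Chapter 5]{MR2248869}: inspecting the construction of the semi-algebraically (path-)connected components shows that their number is bounded solely in terms of the ambient dimension and the degrees and number of the defining polynomials. You instead invoke the quantitative Oleinik--Petrovskii--Milnor--Thom bound, after first reducing the statement to the claim that a real algebraic curve of degree at most $b$ (degree being defined, as in the paper, through the smallest complex curve containing it) is cut out by $\lesssim_{b,n}1$ real polynomials of degree $\lesssim_{b,n}1$. You are right to single this reduction out as the heart of the matter, since the degree hypothesis says nothing directly about any given system of defining equations; but it is a true and classical fact, not just a plausible one: an irreducible complex curve of degree $b_i$ in $\C^n$ is the set-theoretic intersection of the hypersurfaces of degree at most $b_i$ containing it (cones over the curve from generic linear centres provide, for each point off the curve, such a hypersurface missing it), and a basis of the space of forms of degree at most $b_i$ vanishing on the curve gives $\lesssim_{b,n}1$ equations; taking real and imaginary parts and using $\gamma_i=\Gamma_i\cap\R^n$ (Lemma \ref{newstuff}), together with $m\leq b$ irreducible components, exhibits $\gamma$ as a real algebraic set of bounded description complexity. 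The Milnor--Thom estimate then bounds the number of connected components by a quantity depending only on $b$ and $n$, and the fact (already quoted before Lemma \ref{4.1.15}) that semi-algebraic connected components are path-connected converts this into the stated bound. So your argument closes once this step is written out; it is worth noting that the paper's own proof implicitly needs the same control of the defining data, since the algorithm of \cite{MR2248869} is run on a concrete system of equations. What each approach buys: the paper's is shorter but rests on a ``closer study'' of an algorithm, whereas yours, once the bounded-defining-data step is in place, yields an explicit numerical bound from standard quoted results.
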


\begin{proof} This is obvious by a closer study of the algorithm in \cite[Chapter 5]{MR2248869} that constitutes the proof of the fact that every real semi-algebraic set is the finite, disjoint union of path-connected components.

\end{proof}

Finally, we are interested in curves in $\R^3$ parametrised by $t \rightarrow \big(p_1(t)$, $p_2(t)$, $p_3(t)\big)$ for $t \in \R$, where $p_i \in \R[t]$ for $i=1,2,3$. Note that, although curves in $\C^3$ with a polynomial parametrisation are, in fact, complex algebraic curves of degree equal to the maximal degree of the polynomials realising the parametrisation (see \cite[Chapter 3, \S3]{Cox+Others/1991/Ideals}), curves in $\R^3$ with a polynomial parametrisation are not, in general, real algebraic curves, which is why we treat their case separately.

More particularly, if a curve $\gamma$ in $\R^3$ is parametrised by $t \rightarrow \big(p_1(t)$, $p_2(t)$, $p_3(t)\big)$ for $t \in \R$, where the $p_i \in \R[t]$, for $i=1,2,3$, are polynomials not simultaneously constant, then the complex algebraic curve $\gamma_{\C}$ parametrised by the same polynomials viewed as elements of $\C[t]$ is irreducible (it is easy to see that if it contains a complex algebraic curve, then the two curves are identical). Therefore, by B\'ezout's theorem, $\gamma_{\C}$ is the unique complex algebraic curve containing $\gamma$.

Taking advantage of this fact, we will show here that each curve in $\R^3$ with a polynomial parametrisation is contained in a real algebraic curve in $\R^3$.

To that end, we first show the following.

\begin{lemma} \label{maybelast}The intersection of a complex algebraic curve in $\C^n$ with $\R^n$ is a real algebraic set, of dimension at most 1.
\end{lemma}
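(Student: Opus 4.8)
The plan is to establish the statement in two steps: first that $\gamma_{\C}\cap\R^n$ is the zero set of a family of real polynomials, and then that this real algebraic set has dimension at most $1$.

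For the first step I would write $\gamma_{\C}=V(q_1,\ldots,q_m)$ with $q_i\in\C[z_1,\ldots,z_n]$ and split each $q_i$, at the level of its coefficients, into real and imaginary parts, $q_i=a_i+\sqrt{-1}\,b_i$ with $a_i,b_i\in\R[x_1,\ldots,x_n]$. For $x\in\R^n$ one has $q_i(x)=a_i(x)+\sqrt{-1}\,b_i(x)$ with $a_i(x),b_i(x)\in\R$, so $q_i(x)=0$ if and only if $a_i(x)=b_i(x)=0$; hence $\gamma_{\C}\cap\R^n=V(a_1,b_1,\ldots,a_m,b_m)$ is a real algebraic set, and in particular a semi-algebraic set.

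For the dimension bound I would first reduce to the case of an irreducible curve: a complex algebraic curve is the union of finitely many isolated points with finitely many irreducible complex algebraic curves, the isolated points contribute only finitely many real points, and the real dimension is unaffected by removing finitely many points, so it suffices to bound $\dim_{\R}(\gamma_{\C}\cap\R^n)$ when $\gamma_{\C}$ is irreducible. For such $\gamma_{\C}$ the goal is to produce a \emph{real} linear form $\ell\in\R[x_1,\ldots,x_n]$ which, regarded as a map $\C^n\to\C$, is non-constant on $\gamma_{\C}$. Granting this, for every $c\in\C$ the fibre $\{z\in\gamma_{\C}:\ell(z)=c\}$ is the zero set on the irreducible curve $\gamma_{\C}$ of the non-zero regular function $\ell-c$, hence a proper subvariety of $\gamma_{\C}$ and therefore finite; in particular, for $c\in\R$, the fibre of $\ell$ over $c$ inside $\gamma_{\C}\cap\R^n$ is finite. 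Since $\gamma_{\C}\cap\R^n$ is semi-algebraic and $\ell$ restricts on it to a semi-algebraic map to $\R$ with finite fibres, the dimension theory of semi-algebraic sets (see \cite{BCR87} or \cite[Chapter 5]{MR2248869}) gives $\dim_{\R}(\gamma_{\C}\cap\R^n)\leq\dim_{\R}\ell(\gamma_{\C}\cap\R^n)\leq\dim_{\R}\R=1$, as wanted.

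The crux of the argument, and the step I expect to need the most care, is the existence of such a real linear form; the reality is essential, since a form with complex coefficients would only map into $\C$ and yield the useless bound $\dim_{\R}(\gamma_{\C}\cap\R^n)\leq 2$. I would argue as follows. As $\gamma_{\C}$ is a curve it contains two distinct points, so the $\C$-linear span $W\subseteq\C^n$ of the differences $z-z'$ with $z,z'\in\gamma_{\C}$ is non-zero; fix $0\neq w=(w_1,\ldots,w_n)\in W$ and write $w_k=u_k+\sqrt{-1}\,v_k$ with $u_k,v_k\in\R$. A linear form is constant on $\gamma_{\C}$ exactly when it annihilates $W$; for a real form $\ell=\sum_k\ell_kx_k$, the condition $\ell(w)=0$ amounts to $\sum_k\ell_ku_k=0$ together with $\sum_k\ell_kv_k=0$, and, since $w\neq0$, at least one of these is a non-trivial linear equation in $(\ell_1,\ldots,\ell_n)\in\R^n$. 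Hence the real linear forms constant on $\gamma_{\C}$ are confined to a proper linear subspace of $\R^n$, so a generic real linear form is non-constant on $\gamma_{\C}$. As an alternative to this linear-algebra route, one could observe that if $\gamma_{\C}\cap\R^n$ had dimension $\geq2$ then near a smooth point of $\gamma_{\C}$ it would contain a subset of $\gamma_{\C}$ that is open in the analytic topology and entirely real; pulling this back by a local holomorphic parametrisation of $\gamma_{\C}$ would produce non-constant holomorphic coordinate functions taking only real values, contradicting the open mapping theorem — though this variant still requires the standard fact that a semi-algebraic set of dimension $d$ has a point near which it is a $d$-dimensional analytic manifold, which is of a depth comparable to the semi-algebraic dimension input used above.
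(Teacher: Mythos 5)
Your proof is correct, but it takes a genuinely different route from the paper's. For real algebraicity the paper multiplies each defining polynomial $p_i$ by its conjugate, using that the real zero set of $p_i$ coincides with that of $p_i\bar{p_i}\in\R[x_1,\ldots,x_n]$, which is essentially your real/imaginary-part splitting (on $\R^n$, $p\bar p=a^2+b^2$). The real divergence is in the dimension bound: the paper stays entirely algebraic and uses the initial-ideal criterion for dimension (the Gr\"obner-basis fact cited from \cite{Sturmfels_2005}) — since $\dim_{\C}\gamma_{\C}=1$, for every pair $i\neq j$ the ideal $in_{\prec}(I(\gamma_{\C}))$ contains a monomial in $x_i,x_j$ only, and if $p$ is a corresponding polynomial then $p\bar p$ lies in $I(\gamma_{\C}\cap\R^n)$ with initial monomial in the same two variables, so the same criterion applied over $\R$ gives dimension $\leq 1$; this transports the complex bound directly to the real ideal, with no decomposition into irreducible components and no semi-algebraic machinery, and it bounds precisely the transcendence-degree dimension the paper defined. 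Your argument instead reduces to irreducible components, produces a real linear form non-constant on the complex curve (your linear-algebra argument is fine; even simpler, some coordinate function must be non-constant since the curve is not a point), observes the fibres over $\C$ are finite by irreducibility, and invokes the fibre-dimension theorem for semi-algebraic maps from \cite{BCR87}, \cite[Chapter 5]{MR2248869}. That is a perfectly sound and more geometric route; the only point you leave implicit is that the semi-algebraic dimension you bound agrees with the algebraic dimension (transcendence degree of $\R[x_1,\ldots,x_n]/I(\gamma)$) used in the statement — this equivalence for real algebraic sets is standard and is in the same references, but it deserves a sentence, since the paper's argument never needs it.
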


\begin{proof} Let $\gamma_{\C}$ be a complex algebraic curve in $\C^n$, and $\gamma$ the intersection of $\gamma_{\C}$ with $\R^n$. We show that $\gamma$ is a real algebraic set, of dimension at most 1.

Indeed, since $\gamma_{\C}$ is a complex algebraic set in $\C^n$, there exist polynomials $p_1, \ldots,p_k \in \C[x_1,\ldots,x_n]$, such that $\gamma_{\C}$ is the intersection of the zero sets of $p_1, \ldots, p_k$ in $\C^n$. Now, for $i=1,\ldots,k$, the intersection of the zero set of the polynomial $p_i$ in $\C^n$ with $\R^n$ is equal to the zero set of $p_i$ in $\R^n$, which is the same as the zero set of the polynomial $p_i\bar{p_i} \in \R[x_1,\ldots,x_n]$ in $\R^n$. Therefore, $\gamma$ is the intersection of the zero sets of the polynomials $p_1\bar{p_1}, \ldots, p_k\bar{p_k} \in \R[x_1,\ldots,x_n]$ in $\R^n$, it is thus a real algebraic set.

Moreover, let $\prec$ be a term order on the set of monomials in the variables $x_1, \ldots, x_n$. Since $\gamma_{\C}$ is a complex algebraic curve in $\C^n$, it holds that, for every $i\neq j$, $i,j \in \{1,\ldots,n\}$, there exists a monomial in the variables $x_i$ and $x_j$ in the ideal $in_{\prec}(I(\gamma_{\C}))$. Now, if a polynomial $p \in \C[x_1,\ldots,x_n]$ belongs to $I(\gamma_{\C})$, i.e. vanishes on $\gamma_{\C}$, then the polynomial $p \bar{p} \in \R[x_1,\ldots,x_n]$ vanishes on $\gamma$, and thus belongs to the ideal $I(\gamma)$ of $\R[x_1,\ldots,x_n]$. Therefore, there exists a monomial in the variables $x_i$ and $x_j$ in the ideal $in_{\prec}(I(\gamma))$. Consequently, the algebraic set $\gamma$ has dimension at most 1. 

\end{proof}

\begin{corollary} \label{parametrisedcurves} Let $\gamma$ be a curve in $\R^3$, parametrised by $t \rightarrow \big(p_1(t)$, $p_2(t)$, $p_3(t)\big)$ for $t \in \R$, where the $p_i \in \R[t]$, for $i=1,2,3$, are polynomials not simultaneously constant, of degree at most $b$. Then, $\gamma$ is contained in an irreducible real algebraic curve in $\R^3$, of degree at most $b$.
\end{corollary}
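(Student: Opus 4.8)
The plan is to complexify the parametrisation, take the real Zariski closure of $\gamma$, and read off its dimension and degree from the ambient irreducible complex curve supplied by the discussion preceding the statement. Write $P\colon\R\to\R^3$, $P(t)=(p_1(t),p_2(t),p_3(t))$, so that $\gamma=P(\R)$, and let $\gamma_\C\subseteq\C^3$ be the curve parametrised by $p_1,p_2,p_3$ viewed as elements of $\C[t]$. As recalled just above, $\gamma_\C$ is an irreducible complex algebraic curve of degree $\max_i\deg p_i\le b$, and it is the complex algebraic curve containing $\gamma$; thus $\gamma\subseteq\gamma_\C$, and since also $\gamma\subseteq\R^3$ we get $\gamma\subseteq\gamma_\C\cap\R^3$.

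First I would apply Lemma \ref{maybelast} to $\gamma_\C$: the set $\Gamma:=\gamma_\C\cap\R^3$ is a real algebraic set of dimension at most $1$. Let $\gamma^*$ denote the Zariski closure of $\gamma$ in $\R^3$ with respect to real polynomials. Since $\R^1$ is an irreducible real affine variety and $P$ is a morphism, $\gamma^*$ is an irreducible real algebraic set; since $\Gamma$ is Zariski closed and contains $\gamma$, we have $\gamma^*\subseteq\Gamma$, hence $\dim\gamma^*\le 1$; and since the $p_i$ are not all constant, some $p_i$ takes infinitely many real values, so $\gamma$, hence $\gamma^*$, is infinite and $\dim\gamma^*\ge 1$. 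Therefore $\gamma^*$ is an irreducible real algebraic curve in $\R^3$ containing $\gamma$.

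It remains to check $\deg\gamma^*\le b$. By Lemma \ref{4.1.1}, $\gamma^*$ is contained in a unique irreducible complex algebraic curve, and $\deg\gamma^*$ is by definition the degree of that curve. The complex Zariski closure of $\gamma^*$ is a complex algebraic set contained in $\gamma_\C$ (because $\gamma^*\subseteq\Gamma\subseteq\gamma_\C$); it is infinite, so it has dimension exactly $1$, and a $1$-dimensional closed subset of the irreducible $1$-dimensional curve $\gamma_\C$ must equal $\gamma_\C$. Hence the unique irreducible complex curve containing $\gamma^*$ is $\gamma_\C$, so $\deg\gamma^*=\deg\gamma_\C=\max_i\deg p_i\le b$, and $\gamma^*$ is the required curve.

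The genuinely new ingredient is Lemma \ref{maybelast} together with the irreducibility of $\gamma_\C$: it is precisely the inclusion $\gamma\subseteq\gamma_\C\cap\R^3$ and the bound $\dim(\gamma_\C\cap\R^3)\le 1$ that force $\gamma^*$ to be a curve rather than something higher-dimensional, and the irreducibility of $\gamma_\C$ that then pins down both the unique complex curve through $\gamma^*$ and its degree. Everything else is routine (the Zariski closure of the image of an irreducible variety is irreducible; dimension cannot increase when passing to a closed subset), so I do not expect a serious obstacle here.
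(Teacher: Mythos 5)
Your proof is correct and takes essentially the same route as the paper: complexify to the irreducible curve $\gamma_{\C}$ of degree $\max_i\deg p_i\leq b$, invoke Lemma \ref{maybelast} to see that $\gamma_{\C}\cap\R^3$ is a real algebraic set of dimension at most $1$, and obtain the degree bound from $\gamma_{\C}$ via the definition of degree through Lemma \ref{4.1.1}. The only cosmetic difference is that you exhibit the containing curve as the real Zariski closure $\gamma^*$ of $\gamma$ (irreducible because it is the closure of the image of the irreducible variety $\R$ under a polynomial map), whereas the paper shows directly that $\gamma_{\C}\cap\R^3$ itself is irreducible by a contradiction argument using Lemma \ref{4.1.1}; the two constructions produce the same curve.
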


\begin{proof} Let $\gamma_{\C}$ be the curve in $\C^3$, parametrised by $t \rightarrow \big(p_1(t), p_2(t),p_3(t)\big)$ for $t \in \C$. As we have already discussed, $\gamma_{\C}$ is the (unique) irreducible complex algebraic curve containing $\gamma$.

Clearly, $\gamma$ is contained in the intersection of $\gamma_{\C}$ with $\R^3$, which, by Lemma \ref{maybelast}, is a real algebraic set, of dimension at most 1. However, since $\gamma_{\C} \cap \R^3$ contains the parametrised curve $\gamma$, it has, in fact, dimension equal to 1. Therefore, $\gamma$ is contained in the real algebraic curve $\gamma_{\C}\cap\R^3$. In fact, $\gamma_{\C}\cap\R^3$ is an irreducible real algebraic curve. Indeed, if $\gamma ' \subsetneq \gamma_{\C}\cap\R^3$ was an irreducible real algebraic curve, and $\gamma_{\C}'$ was the (unique) irreducible complex algebraic curve containing it, then $\gamma_{\C}\cap \R^3 \supsetneq \gamma'=\gamma_{\C}'\cap \R^3$, and thus $\gamma_{\C}\cap \gamma_{\C}'$ $\subsetneq \gamma_{\C}$ would be a complex algebraic curve, which cannot hold, since $\gamma_{\C}$ is irreducible.

Moreover, since $\gamma_{\C}$ is an irreducible algebraic curve in $\C^3$, it is the smallest complex algebraic curve containing the real algebraic curve $\gamma_{\C} \cap \R^3$, and thus the degree of $\gamma_{\C}\cap\R^3$ is equal to $\deg \gamma_{\C}=\max\{\deg p_1, \deg p_2, \deg p_3\}$, and thus equal to at most $b$.

\end{proof}

Note that the above analysis regarding curves parametrised by polynomials appears in \cite{Iliopoulou_13}.

\section{Transversality of more general curves} \label{section6}

We consider the family $\mathcal{F}_n$ of all non-empty sets in $\R^n$ with the property that, if $\gamma \in \mathcal{F}_n$ and $x \in \gamma$, then a basic neighbourhood of $x$ in $\gamma$ is either $\{x\}$ or the finite union of parametrised curves, each homeomorphic to a semi-open line segment with one endpoint the point $x$. In addition, if there exists a parametrisation $f:[0,1)\rightarrow \R^n$ of one of these curves, with $f(0)=x$ and $f'(0)\neq 0$, then the line in $\R^n$ passing through $x$ with direction $f'(0)$ is tangent to $\gamma$ at $x$. If $\Gamma \subset \mathcal{F}_n$, we denote by $T_x^{\Gamma}$ the set of directions of all tangent lines at $x$ to the sets of $\Gamma$ passing through $x$ (note that $T_x^{\Gamma}$ might be empty and that there might exist many tangent lines to a set of $\Gamma$ at $x$).

\begin{definition} Let $\Gamma_1, \ldots, \Gamma_n$ be collections of sets in $\mathcal{F}_n$. A point $x$ in $\R^n$ is a \emph{multijoint} formed by these collections if

\emph{(i)} $x$ belongs to at least one of the sets in $\Gamma_i$, for all $i=1,\ldots,n$, and \newline
\emph{(ii)} there exists at least one vector $v_i$ in $T_x^{\Gamma_i}$, for all $i=1,\ldots,n$, such that the set $\{v_1,\ldots,v_n\}$ spans $\R^n$.

We denote by $J(\Gamma_1,\ldots,\Gamma_n)$ the set of multijoints formed by $\Gamma_1, \ldots, \Gamma_n$.
\end{definition}

We will show here that, under certain assumptions on the properties of the sets in finite collections $\Gamma_1$, $\Gamma_2$ and $\Gamma_3$ in $\mathcal{F}_3$, the corresponding statements of Theorem \ref{theoremmult2} and Theorem \ref{multsimple} still hold.

\begin{definition} Let $\Gamma_1$, $\Gamma_2$, $\Gamma_3$ be collections of real algebraic curves in $\R^3$. We say that the collections $\Gamma_1$, $\Gamma_2$, $\Gamma_3$ are \emph{transversal} if, whenever $\gamma_i \in \Gamma_i$, for $i=1,2,3$, there exist vectors $v_1 \in T_x^{\gamma_1}$, $v_2 \in T_x^{\gamma_2}$ and $v_3 \in T_x^{\gamma_3}$ that span $\R^3$.

\end{definition}

\begin{definition} Let $\Gamma_1$, $\Gamma_2$, $\Gamma_3$ be finite collections of real algebraic curves in $\R^3$.

For all $(N_1,N_2,N_3) \in \R_+^3$, we define as $J_{N_1,N_2,N_3}(\Gamma_1,\Gamma_2,\Gamma_3)$ the set of those multijoints $x$ formed by $\Gamma_1$, $\Gamma_2$, $\Gamma_3$, with the property that there exist transversal collections $\Gamma_1(x) \subseteq \Gamma_1$, $\Gamma_2(x) \subseteq \Gamma_2$ and $\Gamma_3(x) \subseteq \Gamma_3$ of lines passing through $x$, such that $|\Gamma_i(x)|\geq N_i$, for all $i=1,2,3$.

\end{definition}

Indeed, thanks to the results of Section \ref{section5}, we are now ready to formulate and prove Theorem \ref{guthweakcurves} that follows, an extension of Theorem \ref{multsimple}. An immediate corollary is Theorem \ref{theoremmult3}, which is an extension of Theorem \ref{theoremmult2}, and which, by the probabilistic argument we described in Section \ref{section4}, is in fact equivalent to Theorem \ref{guthweakcurves}.

\begin{theorem} \label{guthweakcurves} Let $b$ be a positive constant, and $\Gamma_1$, $\Gamma_2$, $\Gamma_3$ finite collections of real algebraic curves in $\R^3$, of degree at most $b$. Then,
\begin{displaymath}  |J_{N_1,N_2,N_3}(\Gamma_1,\Gamma_2,\Gamma_3)|\leq c_b \cdot \frac{(|\Gamma_1||\Gamma_2||\Gamma_3|)^{1/2}}{(N_1N_2N_3)^{1/2}}, \; \forall\;(N_1,N_2,N_3) \in\N_{+}^3,\end{displaymath}
where $c_b$ is a constant depending only on $b$.
\end{theorem}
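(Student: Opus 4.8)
The plan is to rerun the induction of the proof of Theorem~\ref{multsimple} almost verbatim, replacing each line-specific ingredient by its bounded-degree-curve analogue from Section~\ref{section5}; the output constant becomes $c_b$ because each numerical constant in that proof gets multiplied by a power of $b$. One preliminary reduction is convenient: we may assume every curve in $\Gamma_1,\Gamma_2,\Gamma_3$ is irreducible, by passing to the collection of irreducible components of each curve, which inflates each cardinality by at most $b$; the only multijoints this can cost are ``degenerate'' ones, at which a single component plays the role of two of the three collections and which are therefore singular points of that component, and the number of these is easily seen to be dominated by $(|\Gamma_1||\Gamma_2||\Gamma_3|)^{1/2}(N_1N_2N_3)^{-1/2}$. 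After this reduction the substitutions are: ``a line not in $Z$ meets $Z$ in $\le\deg p$ points'' becomes Corollary~\ref{4.1.3} (an irreducible curve of degree $\le b$ not in $Z$ meets $Z$ in $\le b\deg p$ points); ``distinct lines meet in $\le1$ point'', and the global and local pair counts built on it, become Corollary~\ref{4.1.4} (distinct irreducible curves of degree $\le b$ meet in $\le b^2$ points), the diagonal contributions being controlled by the $\lesssim_b 1$ bound on the singular locus of a bounded-degree irreducible curve; and Proposition~\ref{2.2.3} ($\le d^2$ critical lines) becomes Lemma~\ref{4.1.14} ($\le(\deg p)^2$ critical irreducible real algebraic curves).

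With these substitutions one applies Theorem~\ref{2.1.3} to $\mathfrak{G}:=J_{N_1,N_2,N_3}(\Gamma_1,\Gamma_2,\Gamma_3)$ with $d:=A_b\tfrac{|\Gamma_1||\Gamma_2|}{S\lceil N_1\rceil\lceil N_2\rceil}$, where $S:=|\mathfrak{G}|$, $A_b$ is a large constant, and the collections are ordered so that $\tfrac{|\Gamma_1|}{\lceil N_1\rceil}\le\tfrac{|\Gamma_2|}{\lceil N_2\rceil}\le\tfrac{|\Gamma_3|}{\lceil N_3\rceil}$ (and $d>1$ since $S\lceil N_1\rceil\lceil N_2\rceil\lesssim_b|\Gamma_1||\Gamma_2|$). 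The step that genuinely needs care, and the main obstacle, is the cell argument: a bounded-degree curve, unlike a line, need not exit every cell whose closure it meets and may be tangent to or partly contained in $Z$. The replacement for ``$Z$ cuts a line not in $Z$ into $\le d+1$ arcs, one per open cell'' is that an irreducible curve $\gamma$ of degree $\le b$ with $\gamma\not\subseteq Z$ meets $Z$ in a finite set of $\le b\deg p\le bd$ points (Corollary~\ref{4.1.3}) while having $\lesssim_b 1$ path-components (Lemma~\ref{4.1.15}), so $\gamma\setminus Z$ has $\lesssim_b d$ connected components, each lying in one connected component of $\R^3\setminus Z$, i.e.\ in one open cell; hence $\gamma$ meets the interiors of $\lesssim_b d$ cells. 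This gives $\sum_{\mathrm{cell}}|\Gamma_{i,\mathrm{cell}}|\lesssim_b|\Gamma_i|\,d$ for $i=1,2$ (with $\Gamma_{i,\mathrm{cell}}$ the curves of $\Gamma_i$ with a point in that cell's interior), which, together with the local pair count $S_{\mathrm{cell}}\lceil N_1\rceil\lceil N_2\rceil\lesssim_b|\Gamma_{1,\mathrm{cell}}||\Gamma_{2,\mathrm{cell}}|$ and Cauchy--Schwarz, runs the same chain of inequalities as in Theorem~\ref{multsimple} and forces, for $A_b$ large enough, fewer than $10^{-8}S$ points of $\mathfrak{G}$ into the cell interiors; thus $|\mathfrak{G}\cap Z|>(1-10^{-8})S$.

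The rest transfers verbatim. The combinatorial refinement --- passing to the ``popular'' curves $\Gamma_j'$, then to the points $\mathfrak{G}_j'$ meeting many of them, then to the triple intersection (of size $\gtrsim S$) --- is purely set-theoretic. If $\tfrac{S\lceil N_j\rceil}{|\Gamma_j|}\lesssim_b d$ for some $j$, then $S\lesssim_b(|\Gamma_1||\Gamma_2||\Gamma_3|)^{1/2}(N_1N_2N_3)^{-1/2}$ follows at once from the ordering. Otherwise every popular curve meets $Z$ in more than $bd$ points, hence lies in $Z$ by Corollary~\ref{4.1.3}, so every point of $\mathfrak{G}_1'\cap\mathfrak{G}_2'\cap\mathfrak{G}_3'$ is a critical point of $Z$: at a regular point $x$ of $Z$ every curve lying in $Z$ has its tangent direction inside the $2$-plane $T_xZ$, which cannot contain a spanning triple. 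Defining $\Gamma_{j,1}$ as in Theorem~\ref{multsimple}, if $|\Gamma_{j,1}|\gtrsim_b|\Gamma_j|$ for some $j$, then each curve of $\Gamma_{j,1}$ has more than $bd$ critical points and is therefore a critical irreducible real algebraic curve (apply Corollary~\ref{4.1.3} to the partials of $p_{sf}$, of degree $\le d$), so $|\Gamma_j|\lesssim_b(\deg p)^2\lesssim_b\big(\tfrac{|\Gamma_1||\Gamma_2|}{S\lceil N_1\rceil\lceil N_2\rceil}\big)^2$ by Lemma~\ref{4.1.14}, which again yields the bound via the ordering; otherwise $|\Gamma_{i,1}|<|\Gamma_i|/C_b$ for every $i$, and, taking the index $j$ with $|\Gamma_j|=M_j$, one last refinement produces $\gtrsim_b S$ points that are multijoints for the triple $(\Gamma_{j,1},\Gamma_i',\Gamma_k')$ with multiplicities $\gtrsim_b(N_j,N_i,N_k)$; since $|\Gamma_{j,1}|<M_j$ while $|\Gamma_i'|,|\Gamma_k'|$ are at most the already-strictly-smaller original cardinalities, the induction hypothesis applies and closes the induction with a constant $c_b$ depending only on $b$.
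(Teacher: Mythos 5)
Your proposal is correct in substance and follows the paper's induction skeleton, but it handles the one genuinely delicate step --- the cellular estimate --- by a different device than the paper. The paper modifies the Guth--Katz polynomial itself: it multiplies in six generic planes containing the faces of a cube around $\mathfrak{G}$, passes to the complexifications $\gamma_{\C}$ and $Z_{\C}$, and runs an incidence count between curves and cell boundaries (with the planes guaranteeing that even a curve whose relevant piece is an oval trapped inside a cell still meets $Z_{\C}$, and with Lemma \ref{4.1.15} controlling how often a curve can be so trapped). You instead bound directly the number of cells a curve can enter: an irreducible $\gamma \not\subseteq Z$ meets $Z$ in $\le b\deg p$ points (Corollary \ref{4.1.3}), has $\lesssim_b 1$ path-components (Lemma \ref{4.1.15}) and $\lesssim_b 1$ local branches at any point, so $\gamma\setminus Z$ has $\lesssim_b d$ connected components, each contained in a single cell (the cells being unions of connected components of $\R^3\setminus Z$), whence $\sum_{cell}|\Gamma_{i,cell}|\lesssim_b |\Gamma_i|\,d$. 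This is a legitimate and in fact cleaner route: it avoids tampering with the polynomial, avoids complexification in the cellular step, and does not need any bookkeeping of boundary incidences; what it gives up is only the structural parallelism with the lines proof that the paper preserves. Your explicit treatment of the pair counts (the $b^2$ factor from Corollary \ref{4.1.4} and the diagonal curves, absorbed into $\lesssim_b$) is if anything more careful than the paper's wording, and the critical-point/critical-curve and final induction steps coincide with the paper's, with the correct $b$-adjusted thresholds.

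One caveat: your justification of the reduction to irreducible curves is the weakest point. The loss under this reduction is not only the ``degenerate'' multijoints you describe (a component serving two collections at one of its $\lesssim_b 1$ singular points); for general $(N_1,N_2,N_3)$ the real issue is that distinct curves of the same collection may pass through a point only via a shared irreducible component, so that splitting into components can deflate the multiplicities $N_i$, and transversality of \emph{all} triples of chosen components must also be re-verified. The paper asserts this reduction without justification either, so you are no worse off, but your claimed bound on the lost multijoints is not actually a proof of the reduction and this step deserves a genuine argument.
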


\begin{theorem} \label{theoremmult3} Let $b$ be a positive constant, and $\Gamma_1$, $\Gamma_2$, $\Gamma_3$ finite collections of real algebraic curves in $\R^3$, of degree at most $b$. Then,
\begin{displaymath}|J(\Gamma_1,\Gamma_2,\Gamma_3)| \leq c_b \cdot(|\Gamma_1||\Gamma_2||\Gamma_3|)^{1/2}, \end{displaymath}
where $c_b$ is a constant depending only on $b$.
\end{theorem}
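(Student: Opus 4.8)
The plan is to prove the irreducible-curve case of Theorem \ref{guthweakcurves} by adapting the proof of Theorem \ref{multsimple}, and to derive everything else from it. The reduction to irreducible curves is the one genuinely new structural point: replacing each curve in $\Gamma_i$ by its (at most $b$) irreducible components multiplies $|\Gamma_i|$ by at most $b$, keeps all degrees $\le b$, and --- since every branch of a curve at a point is a branch of one of its components, so every direction in $T_x^{\gamma}$ lies in $T_x^{c}$ for a suitable component $c$ of $\gamma$ --- sends every multijoint of $\Gamma_1,\Gamma_2,\Gamma_3$ to a multijoint of the refined collections. Hence Theorem \ref{theoremmult3} for arbitrary degree-$\le b$ curves follows from Theorem \ref{guthweakcurves} for irreducible curves at $(N_1,N_2,N_3)=(1,1,1)$, up to a factor $b^{3/2}$, and Theorem \ref{guthweakcurves} in full then follows from Theorem \ref{theoremmult3} by the probabilistic argument of Section \ref{section4}, which respects both irreducibility and degree. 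So it remains to prove Theorem \ref{guthweakcurves} for collections of irreducible curves of degree $\le b$.

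For that I would run, essentially verbatim, the induction on $|\Gamma_1|,|\Gamma_2|,|\Gamma_3|$ of the proof of Theorem \ref{multsimple}: the Guth--Katz partition (Theorem \ref{2.1.3}) at a degree $d\sim_b |\Gamma_1||\Gamma_2|/(S\lceil N_1\rceil\lceil N_2\rceil)$, the pigeonholing that produces the sets $\mathfrak{L}_j'$ and $\mathfrak{G}_j'$, the inclusion--exclusion yielding $|\mathfrak{G}_1'\cap\mathfrak{G}_2'\cap\mathfrak{G}_3'|\gtrsim S$, and the closing appeal to the induction hypothesis on a triple in which one collection is strictly smaller. Three line-facts get replaced by their bounded-degree-curve analogues: (i) ``a line not contained in $Z$ meets $Z$ in $\le d$ points'' becomes ``an irreducible degree-$\le b$ curve not contained in $Z$ meets $Z$ in $\le bd$ points'' (Corollary \ref{4.1.3}); (ii) ``two distinct lines meet in at most one point'', on which the counting bound $S\lceil N_1\rceil\lceil N_2\rceil\le|\Gamma_1||\Gamma_2|$ rests both globally and inside each cell, becomes ``two distinct irreducible degree-$\le b$ curves meet in $\le b^2$ points'' (Corollary \ref{4.1.4}), so that bound becomes $S\lceil N_1\rceil\lceil N_2\rceil\le b^2|\Gamma_1||\Gamma_2|$; (iii) ``$Z$ contains $\le d^2$ critical lines'' (Proposition \ref{2.2.3}) becomes ``$Z$ contains $\le d^2$ critical irreducible real algebraic curves'' (Lemma \ref{4.1.14}), and it is precisely irreducibility that makes this usable --- an irreducible degree-$\le b$ curve lying in $Z$ and carrying more than $bd$ critical points of $Z$ must itself be a critical curve, and there are $\le d^2$ of those. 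Every explicit numerical constant in the argument is thereby multiplied only by a fixed power of $b$, while the combinatorial pigeonholing is untouched.

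The step that really departs from the line case, and which I expect to be the main obstacle, is the cell-incidence estimate which in the line proof forces all but a tiny fraction of the multijoints to lie on $Z$: there it reads $\sum_{\text{full cells}}L_{i,\text{cell}}\lesssim I_i\le L_i\,d$ and comes from ``a line meets $Z$ boundedly often, each intersection point lying on the closures of at most two cells''. For a curve one cannot count intersection points with $Z$ so cheaply, so I would argue at the level of connected pieces: an irreducible degree-$\le b$ curve $\gamma\not\subseteq Z$ meets $Z$ in $\le bd$ points (Corollary \ref{4.1.3}), and since $\gamma$ has only $\lesssim_b 1$ path-connected pieces (Lemma \ref{4.1.15}), each of bounded local complexity, $\gamma\setminus Z$ has $\lesssim_b d$ connected arcs, each lying inside a single open cell; hence $\gamma$ meets the interiors of $\lesssim_b d$ cells. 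Summing over $\gamma\in\Gamma_i$, with curves contained in $Z$ contributing nothing, gives $\sum_{\text{full cells}}L_{i,\text{cell}}\lesssim_b d\,|\Gamma_i|$, which is exactly what the cell part of the argument needs; tangencies with $Z$ and self-crossings of the curves are already swallowed by these bounded counts. Once this is in place, every inequality in the proof of Theorem \ref{multsimple} reappears with its constant rescaled by a fixed power of $b$; since $b$ is fixed, the induction closes with a constant $c_b$ depending only on $b$, proving Theorem \ref{guthweakcurves} for irreducible curves and hence, by the two reductions above, Theorem \ref{guthweakcurves} and Theorem \ref{theoremmult3} in general.
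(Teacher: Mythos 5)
Your proposal reaches the statement by the same overall route as the paper: Theorem \ref{theoremmult3} is obtained as the case $(N_1,N_2,N_3)=(1,1,1)$ of Theorem \ref{guthweakcurves}, which in turn is proved by rerunning the induction and polynomial partitioning of Theorem \ref{multsimple}, with the reduction to irreducible components, the counting bound adjusted by $b^2$ (a point the paper glosses over and you handle more carefully), Corollary \ref{4.1.3} to force curves with many incidences into $Z$, and Lemma \ref{4.1.14} for critical curves — all of this matches the paper. Where you genuinely diverge is the cellular step. The paper's concern is exactly the curves you call trapped: a curve (e.g.\ a compact oval) can meet a cell's interior without ever crossing its boundary in the useful topological sense, so the line-style count of boundary incidences breaks down; the paper repairs it by multiplying the partitioning polynomial by six generic planes containing the faces of a cube around $\mathfrak{G}$, passing to the complexifications $\gamma_{\C}$ (which are forced by genericity to meet $Z_{\C}$), and running the incidence count over $\C$, with Lemma \ref{4.1.15} used to control the curves collected in the sets $\Gamma'_{i,cell}$. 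You instead bound directly the number of cells a curve can enter: an irreducible $\gamma\not\subseteq Z$ meets $Z$ in at most $b\,d$ points, has $\lesssim_b 1$ path-components (Lemma \ref{4.1.15}) and boundedly many local branches at any point (the degree bound on self-crossings from Section \ref{section5}), so $\gamma\setminus Z$ has $\lesssim_b d$ connected pieces, each contained in a single cell, giving $\sum_{cell}|\Gamma_{i,cell}|\lesssim_b d\,|\Gamma_i|$ — which is all the cellular contradiction needs, with curves lying in $Z$ contributing nothing. This is a genuinely simpler treatment: it eliminates the cube/genericity construction and the complexified incidence bookkeeping altogether, and it handles trapped components for free since each is counted once. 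What it costs is the one estimate you should state and justify explicitly — that deleting $\lesssim_b d$ points from a degree-$\le b$ real algebraic curve leaves $\lesssim_b d$ connected components; this follows from the bounded local branching the paper itself asserts (a curve crosses itself at a point at most $\deg$ times) together with Lemma \ref{4.1.15}, or from standard quantitative bounds on connected components of semialgebraic sets, so it is consistent with the paper's toolkit, but as written it is the only step resting on an unproved (though standard) fact. With that made precise, your argument closes the induction just as the paper's does.
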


\begin{proof} It holds that $J(\Gamma_1,\Gamma_2,\Gamma_3)=J_{1,1,1}(\Gamma_1,\Gamma_2,\Gamma_3)$, therefore the Theorem is an immediate application of Theorem \ref{guthweakcurves} for $(N_1,N_2,N_3)=(1,1,1)$.

\end{proof}

\textit{Proof of Theorem \ref{guthweakcurves}.} The proof will be achieved by induction on the cardinalities of $\Gamma_1$, $\Gamma_2$ and $\Gamma_3$. Indeed, fix $(M_1,M_2,M_3) \in {\N}_+^3$. For $c_b$ an explicit constant $\geq b^{2}$, which depends only on $b$ and will be specified later:

(i) For any finite collections $\Gamma_1$, $\Gamma_2$ and $\Gamma_3$ of real algebraic curves in $\R^3$, of degree at most $b$, such that $|\Gamma_1|=|\Gamma_2|=|\Gamma_3|=1$, we have that
\begin{displaymath} |J_{N_1,N_2,N_3}(\Gamma_1,\Gamma_2,\Gamma_3)|  \leq c_b \cdot \frac{(|\Gamma_1||\Gamma_2||\Gamma_3|)^{1/2}}{(N_1N_2N_3)^{1/2}}, \; \forall\;(N_1,N_2,N_3)\in\R_{+}^3.\end{displaymath}
This is obvious, in fact, for any $c_b \geq b^{2}$, as in this case $|J_{N_1,N_2,N_3}(\Gamma_1,\Gamma_2,\Gamma_3)|=0$ for all $(N_1,N_2,N_3)$ in $\R^3_+$ such that $N_i \gneq 1$ for some $i \in \{1,2,3\}$, while, for $(N_1,N_2,N_3)$ in $\R^3_+$ such that $N_i \leq 1$ for all $i \in \{1,2,3\}$, $|J_{N_1,N_2,N_3}(\Gamma_1,\Gamma_2,\Gamma_3)|$ is equal to at most the number of intersections between the curve in $\Gamma_1$ and the curve in $\Gamma_2$, and thus equal to at most $b^2$.

(ii) Suppose that \begin{displaymath} |J_{N_1,N_2,N_3}(\Gamma_1,\Gamma_2,\Gamma_3)|  \leq c_b \cdot \frac{(|\Gamma_1||\Gamma_2||\Gamma_3|)^{1/2}}{(N_1N_2N_3)^{1/2}}, \; \forall\;(N_1,N_2,N_3) \in \R_{+}^3,\end{displaymath} for any finite collections $\Gamma_1$, $\Gamma_2$ and $\Gamma_3$ of real algebraic curves in $\R^3$, of degree at most $b$, such that $|\Gamma_1|\lneq M_1$, $|\Gamma_2| \lneq M_2$ and $|\Gamma_3| \lneq M_3$.

(iii) We will prove that  \begin{displaymath} |J_{N_1,N_2,N_3}(\Gamma_1,\Gamma_2,\Gamma_3)|  \leq c \cdot \frac{(|\Gamma_1||\Gamma_2||\Gamma_3|)^{1/2}}{(N_1N_2N_3)^{1/2}}, \; \forall\;(N_1,N_2,N_3) \in \R_{+}^3,\end{displaymath} for any finite collections $\Gamma_1$, $\Gamma_2$ and $\Gamma_3$ of real algebraic curves in $\R^3$, of degree at most $b$, such that $|\Gamma_j|= M_j$ for some $j \in \{1,2,3\}$ and $|\Gamma_i| \lneq M_i$, $|\Gamma_k| \lneq M_k$ for $\{i,k\}=\{1,2,3\}\setminus \{j\}$.

Indeed, fix such collections $\Gamma_1$, $\Gamma_2$ and $\Gamma_3$ of real algebraic curves, and $(N_1,N_2,N_3) \in\R_{+}^3$.

For all $j=1,2,3$, each $\gamma \in \Gamma_j$ consists of at most $b \lesssim_b 1$ irreducible components. Therefore, we can assume that each $\gamma \in \Gamma_j$ is an irreducible real algebraic curve, for all $j=1,2,3$.

For simplicity, let $$\mathfrak{G}:= J_{N_1,N_2,N_3}(\Gamma_1,\Gamma_2,\Gamma_3)$$and$$S:=|J_{N_1,N_2,N_3}(\Gamma_1,\Gamma_2,\Gamma_3)|.$$

Now, the proof is completely analogous to that of Theorem \ref{multsimple}. The main differences lie at the beginning and the cellular case, we thus go on to point them out.

We assume that \begin{displaymath} \frac{|\Gamma_1|}{\lceil N_1 \rceil} \leq \frac{|\Gamma_2|}{\lceil N_2\rceil} \leq \frac{|\Gamma_3|}{\lceil N_3 \rceil}. \end{displaymath}

By the definition of the set $\mathfrak{G}$, each point of $\mathfrak{G}$ lies in $\geq \lceil N_1 \rceil$ curves of $\Gamma_1$ and $\geq \lceil N_2\rceil$ curves of $\Gamma_2$. Thus, the quantity $S\lceil N_1\rceil\lceil N_2\rceil$ is equal to at most the number of pairs of the form $(\gamma_1,\gamma_2)$, where $\gamma_1 \in \Gamma_1$, $\gamma_2 \in \Gamma_2$ and the curves $\gamma_1$ and $\gamma_2$ pass through the same point of $\mathfrak{G}$. Therefore, $S\lceil N_1\rceil\lceil N_2\rceil$ is equal to at most the number of all the pairs of the form $(\gamma_1,\gamma_2)$, where $\gamma_1 \in \Gamma_1$ and $\gamma_2 \in \Gamma_2$, i.e. to at most $|\Gamma_1||\Gamma_2|$. So,
\begin{displaymath}S\lceil N_1\rceil\lceil N_2\rceil \leq |\Gamma_1||\Gamma_2|,\end{displaymath}
and therefore
\begin{displaymath} \frac{|\Gamma_1||\Gamma_2|}{S\lceil N_1\rceil\lceil N_2\rceil}\geq 1. \end{displaymath}

Thus, $d:=A\frac{|\Gamma|_1|\Gamma|_2}{S\lceil N_1\rceil\lceil N_2\rceil}$ is a quantity $>1$ for $A>1$. We therefore assume that $A>1$, and we will specify its value later. Now, applying the Guth-Katz polynomial method for this $d>1$ and the finite set of points $\mathfrak{G}$, we deduce that there exists a non-zero polynomial $p\in \R[x,y,z]$, of degree $\leq d$, whose zero set $Z$:

(i) decomposes $\R^3$ in $\lesssim d^3$ cells, each of which contains $\lesssim Sd^{-3}$ points of $\mathfrak{G}$, and

(ii) contains six distinct generic planes, each of which contains a face of a fixed cube $Q$ in $\R^3$, such that the interior of $Q$ contains $\mathfrak{G}$ (and each of the planes is generic in the sense that the plane in $\C^3$ containing it intersects the smallest complex algebraic curve in $\C^3$ containing $\gamma$, for all $\gamma \in \Gamma_1 \cup \Gamma_2$);

to achieve this, we first fix a cube $Q$ in $\R^3$, with the property that its interior contains $\mathfrak{G}$ and the planes containing its faces are generic in the above sense. Then, we multiply the polynomials we end up with at each step of the Guth-Katz polynomial method with the same (appropriate) six linear polynomials, the zero set of each of which is a plane containing a different face of the cube, and stop the application of the method when we finally get a polynomial of degree at most $d$, whose zero set decomposes $\R^3$ in $\lesssim d^3$ cells (the set of the cells now consists of the non-empty intersections of the interior of the cube $Q$ with the cells that arise from the application of the Guth-Katz polynomial method, as well as the complement of the cube).

We can assume that the polynomial $p$ is square-free, as eliminating the squares of $p$ does not inflict any change on its zero set.

Let us now assume that there are $\geq 10^{-8}S$ points of $\mathfrak{G}$ in the union of the interiors of the cells; by choosing $A$ to be a sufficiently large constant depending only on $b$, we will be led to a contradiction.

Indeed, there are $\gtrsim S$ points of $\mathfrak{G}$ in the union of the interiors of the cells. However, we also know that there exist $\lesssim d^3$ cells in total, each with $\lesssim Sd^{-3}$ points of $\mathfrak{G}$. Therefore, there exist $\gtrsim d^3$ cells, with $\gtrsim Sd^{-3}$ points of $\mathfrak{G}$ in the interior of each. We call the cells with this property ``full cells".

Now, for every full cell,  let $\mathfrak{G}_{cell}$ be the set of points of $\mathfrak{G}$ in the interior of the cell, $\Gamma_{1,cell}$ and $\Gamma_{2,cell}$ the sets of curves of $\Gamma_1$ and $\Gamma_2$, respectively, each containing at least one point of $\mathfrak{G}_{cell}$, and $S_{cell}:=|\mathfrak{G}_{cell}|$. Now,
\begin{displaymath}S_{cell}\lceil N_1\rceil \lceil N_2\rceil  \lesssim |\Gamma_{1,cell}||\Gamma_{2,cell}|, \end{displaymath}
as the quantity $S_{cell}\lceil N_1\rceil \lceil N_2\rceil $ is equal to at most the number of pairs of the form $(\gamma_1,\gamma_2)$, where $\gamma_1 \in \Gamma_{1,cell}$, $\gamma_2 \in \Gamma_{2,cell}$ and the curves $\gamma_1$ and $\gamma_2$ pass through the same point of $\mathfrak{G}_{cell}$. Thus, $S_{cell}\lceil N_1\rceil \lceil N_2\rceil $ is equal to at most the number of all the pairs of the form $(\gamma_1,\gamma_2)$, where $\gamma_1 \in \Gamma_{1,cell}$ and $\gamma_2 \in \Gamma_{2,cell}$, i.e. to at most $|\Gamma_{1,cell}||\Gamma_{2,cell}|$.

Therefore,
\begin{displaymath}(|\Gamma_{1,cell}||\Gamma_{2,cell}|)^{1/2} \gtrsim S_{cell}^{1/2} (\lceil N_1\rceil\lceil N_2\rceil )^{1/2} \gtrsim \frac{S^{1/2}}{d^{3/2}}(\lceil N_1\rceil\lceil N_2\rceil)^{1/2}. \end{displaymath}
Furthermore, for every full cell and $i \in \{1,2\}$, let $\Gamma_{i,Z}$ be the set of curves of $\Gamma_i$ which are lying in $Z$, and $\Gamma_{i,cell}'$ the set of curves in $\Gamma_{i,cell}$ such that, if $\gamma \in \Gamma_{i,cell}'$, there does not exist any point $x$ in the intersection of $\gamma$ with the boundary of the cell, with the property that the induced topology from $\R^3$ to the intersection of $\gamma$ with the closure of the cell contains some open neighbourhood of $x$. Obviously, $\Gamma_{i,cell} \subset \Gamma_i \setminus \Gamma_{i,Z}$. Finally, let $I_{i,cell}$ denote the number of incidences between the boundary of the cell and the curves in $\Gamma_{i,cell}$.

Now, let $i \in \{1,2\}$. Each of the curves in $\Gamma_{i,cell}\setminus \Gamma'_{i,cell}$ intersects the boundary of the cell at at least one point $x$, with the property that the induced topology from $\R^3$ to the intersection of the curve with the closure of the cell contains an open neighbourhood of $x$; therefore, $I_{i,cell}\geq |\Gamma_{i,cell}\setminus \Gamma'_{i,cell}|$ ($=|\Gamma_{i,cell}|-|\Gamma'_{i,cell}|$). Also, the union of the boundaries of all the cells is the zero set $Z$ of $p$, and if $x$ is a point of $Z$ which belongs to a curve in $\Gamma_i$ intersecting the interior of a cell, such that the induced topology from $\R^3$ to the intersection of the curve with the closure of the cell contains an open neighbourhood of $x$, then there exist at most $2b-1$ other cells whose interior is also intersected by the curve and whose boundary contains $x$, such that the induced topology from $\R^3$ to the intersection of the curve with the closure of each of these cells contains some open neighbourhood of $x$. So, if $I_i$ is the number of incidences between $Z$ and $\Gamma_i \setminus \Gamma_{i,Z}$, and $\mathcal{C}$ is the set of all the full cells (which, in this case, has cardinality $\gtrsim d^3$), then
\begin{displaymath} I_i \geq \frac{1}{2b} \cdot \sum_{cell \;\in \;\mathcal{C}} I_{i,cell} \geq \frac{1}{2b}\cdot \sum_{cell \;\in \;\mathcal{C}}(|\Gamma_{i,cell}|-|\Gamma'_{i,cell}|).\end{displaymath}
Now, if $\gamma \in \Gamma_{i,cell}$ ($\supseteq \Gamma_{i,cell}'$), we consider the (unique, irreducible) complex algebraic curve $\gamma_{\C}$ in $\C^3$ which contains $\gamma$. In addition, let $p_{\C}$ be the polynomial $p$ viewed as an element of $\C[x,y,z]$, and $Z_{\C}$ the zero set of $p_{\C}$ in $\C^3$. The polynomial $p$ was constructed in such a way that $\gamma_{\C}$ intersects each of 6 complex planes, each of which contains one of the real planes in $Z$ that contain the faces of the cube $Q$; consequently $\gamma_{\C}$ intersects $Z_{\C}$ at least once. Moreover, if $\gamma^{(1)}$, $\gamma^{(2)}$ are two distinct curves in $\Gamma_i$, then $\gamma^{(1)}_{\C}$, $\gamma^{(2)}_{\C}$ are two distinct curves in $\Gamma_{\C}$ (since $\gamma^{(1)}=\gamma^{(1)}_{\C}\cap \R^3$, while $\gamma^{(2)}=\gamma^{(2)}_{\C}\cap \R^3$). So, if $\Gamma_{i,\C}=\{\gamma_{\C}: \gamma \in \Gamma_{i,cell}$, for some cell in $\mathcal{C}\}$ and $I_{i,\C}$ denotes the number of incidences between $\Gamma_{i,\C}$ and $Z_{\C}$, it follows that
$$I_{i,\C}\geq |\Gamma_{i,\C}|=|\Gamma_i|\geq \big|\bigcup_{cell \;\in\; \mathcal{C}}\Gamma_{i,cell}'\big|,
$$while also
$$I_{i,\C}\geq I_i.
$$
Therefore,
$$I_{i,\C}\geq \frac{1}{2}(I_i+I_{i,\C})\geq
$$
\begin{displaymath}\geq \frac{1}{2} \cdot \bigg( \frac{1}{2b} \sum_{cell \;\in\; \mathcal{C}}(|\Gamma_{i,cell}|-|\Gamma_{i,cell}'|) + \big|\bigcup_{cell \;\in \;\mathcal{C}}\Gamma_{i,cell}'\big|\bigg) \sim_b \end{displaymath}
\begin{displaymath} \sim_b\sum_{cell \;\in\; \mathcal{C}}(|\Gamma_{i,cell}|-|\Gamma_{i,cell}'|) + \big|\bigcup_{cell \;\in \;\mathcal{C}}\Gamma_{i,cell}'\big|.\end{displaymath}
However, each real algebraic curve in $\R^3$, of degree at most $b$, is the disjoint union of $\leq R_b$ path-connected components, for some constant $R_b$ depending only on $b$ (by Lemma \ref{4.1.15}). Hence,
$$\big|\bigcup_{cell \;\in \;\mathcal{C}}\Gamma_{i,cell}'\big| \sim_b\sum_{cell \;\in \;\mathcal{C}}|\Gamma_{i,cell}'|,
$$
from which it follows that
 \begin{displaymath}I_{i,\C}\gtrsim_b \sum_{cell \;\in\; \mathcal{C}}(|\Gamma_{i,cell}|-|\Gamma_{i,cell}'|)+\sum_{cell \;\in \;\mathcal{C}}|\Gamma_{i,cell}'|\sim_b\end{displaymath}
$$\sim_b \sum_{cell \;\in \;\mathcal{C}}|\Gamma_{i,cell}|.
$$
On the other hand, however, each $\gamma_{\C}\in \Gamma_{i,\C}$ is a complex algebraic curve of degree at most $b$ which does not lie in $Z_{\C}$, and thus intersects $Z_{\C}$ at most $b \cdot \deg p$ times. Therefore,
\begin{displaymath}I_{i,\C} \lesssim_b |\Gamma_i| \cdot d.
\end{displaymath}
So, for $i \in \{1,2\}$, it holds that
$$\sum_{cell \;\in \;\mathcal{C}}|\Gamma_{i,cell}| \lesssim_b |\Gamma_i| \cdot d.
$$
Hence, from all the above we obtain
\begin{displaymath} \sum_{cell \in \mathcal{C}} \frac{S^{1/2}}{d^{3/2}}(\lceil N_1\rceil \lceil N_2\rceil )^{1/2} 
\lesssim_b \sum_{cell \in \mathcal{C}} (|\Gamma_{1,cell}||\Gamma_{2,cell}|)^{1/2} \lesssim_b \end{displaymath}
\begin{displaymath} \lesssim_b\Bigg(\sum_{cell \in\mathcal{C}} |\Gamma_{1,cell}|\Bigg)^{1/2} \Bigg(\sum_{cell \in\mathcal{C}}|\Gamma_{2,cell}|\Bigg)^{1/2}\lesssim_b \end{displaymath}
\begin{displaymath} \lesssim_b (|\Gamma_1|\cdot d)^{1/2}(|\Gamma_2|\cdot d)^{1/2} \sim_b (|\Gamma_1||\Gamma_2|)^{1/2}d.\end{displaymath}

But the full cells number $\gtrsim d^3$. Thus,
$$d^{3/2}S^{1/2}(\lceil N_1\rceil \lceil N_2\rceil )^{1/2} \lesssim_b (|\Gamma_1||\Gamma_2|)^{1/2}d,
$$
which in turn gives $A\lesssim_b 1$. In other words, there exists some constant $C_b$, depending only on $b$, such that $A \leq C_b$. By fixing $A$ to be a number larger than $C_b$ (and of course large enough to have that $d> 1$), we are led to a contradiction.

Therefore, for $A$ a sufficiently large constant that depends only on $b$, it holds that more than $(1-10^{-8})S$ points of $\mathfrak{G}$ lie in the zero set of $p$. 

The rest of the proof follows in a similar way as the proof of Theorem \ref{multsimple}.

\qed

We are now able to count multijoints formed by a finite collection of curves in $\R^3$, parametrised by real univariate polynomials of uniformly bounded degree.

\begin{corollary}\label{polynomialcurves} Let $b$ be a positive constant and $\Gamma_1$, $\Gamma_2$, $\Gamma_3$ finite collections of curves in $\R^3$, such that, for $j=1,2,3$, each $\gamma \in \Gamma_j$ is parametrised by $t \rightarrow \big(p^{\gamma}_1(t), p_2^{\gamma}(t),p_3^{\gamma}(t)\big)$ for $t \in \R$, where the $p_i^{\gamma} \in \R[t]$, for $i=1,2,3$, are polynomials not simultaneously constant, of degrees at most $b$. Then,
\begin{displaymath}|J(\Gamma_1,\Gamma_2,\Gamma_3)| \leq c_b \cdot(|\Gamma_1||\Gamma_2||\Gamma_3|)^{1/2}, \end{displaymath}
where $c_b$ is a constant depending only on $b$.
\end{corollary}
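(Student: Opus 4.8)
The plan is to deduce Corollary~\ref{polynomialcurves} directly from the already-established Theorem~\ref{theoremmult3}, by replacing each polynomially parametrised curve with a real algebraic curve that contains it and obeys the same degree bound. This is the natural analogue, for parametrised curves, of the reduction carried out in Section~\ref{section5} and the proof of Theorem~\ref{guthweakcurves}.

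First, fix $j\in\{1,2,3\}$ and $\gamma\in\Gamma_j$. By hypothesis the parametrising polynomials $p_1^\gamma,p_2^\gamma,p_3^\gamma\in\R[t]$ are not simultaneously constant and have degree at most $b$, so Corollary~\ref{parametrisedcurves} provides an irreducible real algebraic curve $\widetilde\gamma$ in $\R^3$, of degree at most $b$, with $\gamma\subseteq\widetilde\gamma$; explicitly $\widetilde\gamma=\gamma_{\C}\cap\R^3$, where $\gamma_{\C}$ is the unique irreducible complex algebraic curve carrying the complexified parametrisation. Put $\widetilde\Gamma_j:=\{\widetilde\gamma:\gamma\in\Gamma_j\}$. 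Then $\widetilde\Gamma_1,\widetilde\Gamma_2,\widetilde\Gamma_3$ are finite collections of real algebraic curves in $\R^3$ of degree at most $b$, with $|\widetilde\Gamma_j|\leq|\Gamma_j|$ for each $j$ (the assignment $\gamma\mapsto\widetilde\gamma$ need not be injective, but this only decreases cardinalities, which is harmless).

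The heart of the matter is the inclusion
\[
J(\Gamma_1,\Gamma_2,\Gamma_3)\subseteq J(\widetilde\Gamma_1,\widetilde\Gamma_2,\widetilde\Gamma_3).
\]
Let $x\in J(\Gamma_1,\Gamma_2,\Gamma_3)$, with witnessing curves $\gamma_i\in\Gamma_i$ through $x$ and vectors $v_i\in T_x^{\gamma_i}$ spanning $\R^3$. Condition (i) of the definition of a multijoint is immediate, since $\gamma_i\subseteq\widetilde\gamma_i$. For (ii) one must check that $T_x^{\gamma_i}\subseteq T_x^{\widetilde\gamma_i}$. Here the polynomial parametrisation is used: if $x=\bigl(p_1^{\gamma_i}(t_0),p_2^{\gamma_i}(t_0),p_3^{\gamma_i}(t_0)\bigr)$ and $v_i$ is the tangent direction of a branch of $\gamma_i$ at $x$, then $v_i$ is the direction of the first non-vanishing derivative of the parametrisation at $t_0$, equivalently a limit of secant directions $\mathrm{dir}\bigl(\gamma_i(t)-x\bigr)$; since $\gamma_i\subseteq\widetilde\gamma_i$, the same vector is a limit of secant directions of $\widetilde\gamma_i$ at $x$, hence lies in the tangent cone of $\widetilde\gamma_i$ at $x$, which for a curve is a finite union of lines, each a tangent line of $\widetilde\gamma_i$ in the sense of Section~\ref{section6}. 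Thus $v_i\in T_x^{\widetilde\gamma_i}$, so the same $v_1,v_2,v_3$ witness $x\in J(\widetilde\Gamma_1,\widetilde\Gamma_2,\widetilde\Gamma_3)$.

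Granting the inclusion, the estimate follows at once from Theorem~\ref{theoremmult3} applied to $\widetilde\Gamma_1,\widetilde\Gamma_2,\widetilde\Gamma_3$:
\[
|J(\Gamma_1,\Gamma_2,\Gamma_3)|\leq|J(\widetilde\Gamma_1,\widetilde\Gamma_2,\widetilde\Gamma_3)|\leq c_b\,(|\widetilde\Gamma_1||\widetilde\Gamma_2||\widetilde\Gamma_3|)^{1/2}\leq c_b\,(|\Gamma_1||\Gamma_2||\Gamma_3|)^{1/2},
\]
with the very same constant $c_b$. I expect the only delicate step to be the tangent-direction inclusion $T_x^{\gamma}\subseteq T_x^{\widetilde\gamma}$: one has to make sure that passing from $\gamma$ to the possibly larger set $\widetilde\gamma=\gamma_{\C}\cap\R^3$ does not disturb the tangent data, in particular at parameters $t_0$ where the parametrisation has a vanishing derivative (cusps), and at points where $\widetilde\gamma$ carries extra real branches not traced by real values of $t$ — this is handled by the local Puiseux description of a real algebraic curve together with the fact, implicit in the proof of Theorem~\ref{guthweakcurves}, that such curves of degree at most $b$ belong to $\mathcal{F}_3$ with tangent lines read off from the leading terms of their branch parametrisations. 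Once this is in place, nothing further is needed; the same reduction applied to Theorem~\ref{guthweakcurves} yields the corresponding $J_{N_1,N_2,N_3}$ statement, Corollary~\ref{weakpolynomialcurves}.
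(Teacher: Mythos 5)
Your proposal is correct and follows exactly the approach the paper takes: apply Corollary \ref{parametrisedcurves} to embed each parametrised curve into an irreducible real algebraic curve of degree at most $b$, then invoke Theorem \ref{theoremmult3}. The paper's proof is terser and does not spell out the tangent-direction inclusion $T_x^{\gamma}\subseteq T_x^{\widetilde\gamma}$ or the remark about non-injectivity of $\gamma\mapsto\widetilde\gamma$, but these are exactly the points implicitly being used, and your treatment of them is sound.
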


\begin{proof} By Corollary \ref{parametrisedcurves}, each $\gamma \in \Gamma_j$, for $j=1,2,3$, is contained in a real algebraic curve in $\R^3$, of degree at most $b$. Therefore, the statement of the Corollary follows from Theorem \ref{theoremmult3}.

\end{proof}

Again via the probabilistic argument described earlier, we immediately deduce the following.

\begin{corollary} \label{weakpolynomialcurves} Let $b$ be a positive constant and $\Gamma_1$, $\Gamma_2$, $\Gamma_3$ finite collections of curves in $\R^3$, such that, for $j=1,2,3$, each $\gamma \in \Gamma_j$ is parametrised by $t \rightarrow \big(p^{\gamma}_1(t), p_2^{\gamma}(t),p_3^{\gamma}(t)\big)$ for $t \in \R$, where the $p_i^{\gamma} \in \R[t]$, for $i=1,2,3$, are polynomials not simultaneously constant, of degrees at most $b$. Then,
\begin{displaymath}  |J_{N_1,N_2,N_3}(\Gamma_1,\Gamma_2,\Gamma_3)|\leq c_b \cdot \frac{(|\Gamma_1||\Gamma_2||\Gamma_3|)^{1/2}}{(N_1N_2N_3)^{1/2}}, \; \forall\;(N_1,N_2,N_3) \in\N_{+}^3,\end{displaymath}
where $c_b$ is a constant depending only on $b$.
\end{corollary}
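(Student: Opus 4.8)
The plan is to deduce this from Corollary \ref{polynomialcurves} --- which is exactly the $(N_1,N_2,N_3)=(1,1,1)$ case of the present statement --- by the random-sampling argument already used in Section \ref{section4} to show that Theorem \ref{theoremmult2} and Theorem \ref{multsimple} are equivalent. I would first stress why one cannot simply invoke Theorem \ref{guthweakcurves} instead: although every $\gamma\in\Gamma_j$ lies in a real algebraic curve of degree $\le b$ by Corollary \ref{parametrisedcurves}, distinct polynomially parametrised curves may be contained in the \emph{same} real algebraic curve (for instance $t\mapsto(t,0,0)$ and $t\mapsto(t^3,0,0)$), so replacing each parametrised curve by an algebraic curve containing it can collapse the collections and thereby destroy the multiplicities $N_j$. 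The probabilistic argument, by contrast, stays entirely within the class of polynomially parametrised curves of degree $\le b$, for which only the $N=1$ bound of Corollary \ref{polynomialcurves} is needed.

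Concretely: fix $\Gamma_1,\Gamma_2,\Gamma_3$ and $(N_1,N_2,N_3)\in\N_+^3$ and set $\mathfrak{G}:=J_{N_1,N_2,N_3}(\Gamma_1,\Gamma_2,\Gamma_3)$; we may assume $N_j\le|\Gamma_j|$ for all $j$, else $\mathfrak{G}=\emptyset$. For each $j$ form a random subcollection $\Gamma_j'\subseteq\Gamma_j$ by retaining each curve independently with probability $1/N_j$ (so $\Gamma_j'=\Gamma_j$ when $N_j=1$); each $\Gamma_j'$ again consists of curves parametrised by real polynomials of degree $\le b$. Given $x\in\mathfrak{G}$, fix transversal subcollections $\Gamma_1(x),\Gamma_2(x),\Gamma_3(x)$ of curves through $x$ with $|\Gamma_j(x)|\ge N_j$. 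The event $\Gamma_j'\cap\Gamma_j(x)\neq\emptyset$ has probability $1-(1-1/N_j)^{|\Gamma_j(x)|}\ge 1-(1-1/N_j)^{N_j}\ge 1-e^{-1}$, and the three such events are independent, so with probability $\gtrsim 1$ one may select $\gamma_j\in\Gamma_j'\cap\Gamma_j(x)$ for each $j$; by transversality of the selected subcollections, $\gamma_1,\gamma_2,\gamma_3$ admit tangent vectors at $x$ spanning $\R^3$, so $x\in J(\Gamma_1',\Gamma_2',\Gamma_3')$. Hence $\mathbb{E}\,|J(\Gamma_1',\Gamma_2',\Gamma_3')\cap\mathfrak{G}|\gtrsim|\mathfrak{G}|$, while $\mathbb{E}\,|\Gamma_j'|=|\Gamma_j|/N_j$.

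Next I would perform the standard truncation. Write $A:=|J(\Gamma_1',\Gamma_2',\Gamma_3')\cap\mathfrak{G}|$, so $A\le|\mathfrak{G}|$ deterministically and $\mathbb{E}\,A\ge c\,|\mathfrak{G}|$ for an absolute $c\in(0,1)$; by Markov's inequality $\Pr[\,|\Gamma_j'|>K|\Gamma_j|/N_j\,]\le 1/K$ for each $j$. Choosing $K$ an absolute constant large enough that the union of these three bad events has probability $\le c/2$, one gets $\mathbb{E}[A\cdot\mathbf{1}_{\mathrm{good}}]\ge \mathbb{E}\,A-|\mathfrak{G}|\cdot(c/2)\ge (c/2)|\mathfrak{G}|>0$, so there is a realisation in which $|\Gamma_j'|\lesssim|\Gamma_j|/N_j$ for all $j$ and simultaneously $|J(\Gamma_1',\Gamma_2',\Gamma_3')|\ge A\gtrsim|\mathfrak{G}|$. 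Applying Corollary \ref{polynomialcurves} to that realisation yields $|\mathfrak{G}|\lesssim|J(\Gamma_1',\Gamma_2',\Gamma_3')|\le c_b(|\Gamma_1'||\Gamma_2'||\Gamma_3'|)^{1/2}\lesssim_b (|\Gamma_1||\Gamma_2||\Gamma_3|)^{1/2}/(N_1N_2N_3)^{1/2}$, which is the asserted bound with a constant depending only on $b$. The only points needing care --- the closest thing here to an obstacle --- are verifying that transversality of the random subcollections is inherited from that of $\Gamma_1(x),\Gamma_2(x),\Gamma_3(x)$ (immediate from the definition of transversal collections) and arranging the two expectation estimates so that a single realisation satisfies both conclusions; neither is genuinely difficult, which is why the corollary follows at once from Corollary \ref{polynomialcurves}.
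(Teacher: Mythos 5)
Your proof is correct and follows the paper's own route exactly: the paper likewise deduces this corollary from Corollary \ref{polynomialcurves} by ``the probabilistic argument described earlier,'' and your write-up supplies the details (sampling with probability $1/N_j$, the $1-(1-1/N_j)^{N_j}\ge 1-e^{-1}$ lower bound, independence across the three collections, and the Markov truncation to obtain a single good realisation). One small remark on your motivating observation, which is itself a valid reason one cannot simply pass to the containing algebraic curves and invoke Theorem \ref{guthweakcurves}: the example $t\mapsto(t,0,0)$ versus $t\mapsto(t^3,0,0)$ parametrises the \emph{same} subset of $\R^3$, so a cleaner illustration of two genuinely distinct parametrised curves contained in a single irreducible real algebraic curve would be, say, $t\mapsto(t,t^2,0)$ and $t\mapsto(t^2,t^4,0)$, both contained in the parabola $\{y=x^2,\,z=0\}$.
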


\end{document}